%
%
\documentclass{amsart}
\setlength{\oddsidemargin}{0cm}
\setlength{\evensidemargin}{0in}
\setlength{\textwidth}{17.0cm}
\setlength{\topmargin}{0.36cm}
\setlength{\textheight}{8.55in}
\linespread{1.2}
\setcounter{tocdepth}{3}
\usepackage[nohug]{diagrams}
\usepackage{amscd}
\usepackage[mathscr]{eucal}
\usepackage{amssymb, amsmath, amsthm, braket}
\usepackage{amsfonts}
\usepackage{stmaryrd}
\usepackage{latexsym}
\usepackage{tabularx,array}
\usepackage[all]{xy}
\usepackage{hyperref}
\usepackage{bbm}
\usepackage{latexsym}


\newfont{\msam}{msam10}

\newtheorem{theorem}[]{Theorem}
\newtheorem{proposition}[]{Proposition}
\newtheorem{corollary}[]{Corollary}
\newtheorem{lemma}[]{Lemma}

\newtheorem*{thm1}{Theorem}
\theoremstyle{definition}
\newtheorem{definition}[]{Definition}
\newtheorem{remark}[]{Remark}
\newtheorem{example}[]{Example}

\def\inc{\iota}
\pagestyle{plain}
\let\nc\newcommand

%
%
%

\def\bthm{\begin{theorem}}
\def\ethm{\end{theorem}}
\def\blemma{\begin{lemma}}
\def\elemma{\end{lemma}}
\def\bproof{\begin{proof}}
\def\eproof{\end{proof}}
\def\bprop{\begin{proposition}}
\def\eprop{\end{proposition}}
\def\bcor{\begin{corollary}}
\def\ecor{\end{corollary}}
\nc{\la}{\label}
%
%
\def\Z{\mathbb{Z}}
\def\N{\mathbb{N}}

\def\c{\mathbb{C}}

\def\L {\mathbb{L}}

%
%

\def\Com{\mathtt{Com}}

\def\Alg{\mathtt{Alg}}

\def\LAlg{\mathtt{Lie\,Alg}}
\def\DGL{\mathtt{DGLA}}
\def\DGC{\mathtt{DGC}}
\def\cDGC{\mathtt{DGCC}}

\def\cAlg{\mathtt{Comm\,Alg}}

\def\Sets{\mathtt{Sets}}
\def\DGA{\mathtt{DGA}}

\def\cDGA{\mathtt{DGCA}}

\def\C{\mathcal{C}}

\def\Ho{{\mathtt{Ho}}}
\def\mfa{\mathfrak{a}}
%
%
\nc{\Ob}{{\rm Ob}}
\nc{\Hom}{{\rm{Hom}}}
\nc{\Homcont}{{\mathcal{H}om}}
\nc{\HOM}{\underline{\rm{Hom}}}
\nc{\DER}{\underline{\rm{Der}}}
\nc{\END}{\underline{\rm{End}}}
\nc{\bSym}{\mathbf{Sym}}
\nc{\Ext}{{\rm{Ext}}}
\nc{\Rep}{{\rm{Rep}}}
\nc{\DRep}{{\rm{DRep}}}
\nc{\NCRep}{\widetilde{\rm{Rep}}}
\nc{\RAct}{{\rm{RAct}}}
\nc{\bs}{\backslash}
\nc{\ob}{{\tt{Obs}}}
\nc{\CE}{\mathcal{C}}
\nc{\TP}{{T\!P}}
\nc{\nn}{{{\natural} {\natural}}}
\nc{\n}{{{\natural}}}
\nc{\A}{\mathbb A}
\nc{\Sb}{\mathbb S}
\nc{\B}{{\mathrm{B}}}
\nc{\Ba}{\overline{\mathrm{B}}}
\nc{\bC}{\overline{C}}
\nc{\bOmega}{\boldsymbol{\Omega}}
\nc{\bB}{\boldsymbol{B}}
\nc{\EXT}{\underline{\rm{Ext}}}
\nc{\TOR}{\underline{\rm{Tor}}}
\def\H{\mathrm H}

\def\HC{\mathrm{HC}}

\def\rHC{\overline{\mathrm{HC}}}

\nc{\End}{{\rm{End}}}
\nc{\GL}{{\rm{GL}}}
\nc{\gl}{{\mathfrak{gl}}}
\nc{\rgl}{\overline{{\mathfrak{gl}}}}
\nc{\g}{{\mathfrak{g}}}
\nc{\h}{{\mathfrak{h}}}
\nc{\PGL}{{\rm{PGL}}}
\nc{\SL}{{\rm{SL}}}
\nc{\sll}{\mathfrak{sl}}
\nc{\cn}{ \mbox{\rm c\^{o}ne} }
\nc{\PSL}{{\rm{PSL}}}
\nc{\ad}{{\rm{ad}}}
\nc{\Ad}{{\rm{Ad}}}
\nc{\dlim}{\varinjlim}
\nc{\plim}{\varprojlim}
\nc{\colim}{{\tt{colim}}}

\newcommand{\Spec}{{\rm{Spec}}}

\newcommand{\Sym}{{\rm{Sym}}}

\newcommand{\id}{{\rm{Id}}}

\newcommand{\Tr}{{\rm{Tr}}}

\newcommand{\Ker}{{\rm{Ker}}}

\newcommand{\twopartdef}[4]
{
	\left\{
		\begin{array}{ll}
			#1 & \mbox{if } #2 \\
			#3 & \mbox{if } #4
		\end{array}
	\right.
}
\newcommand{\into}{\,\hookrightarrow\,}

\newcommand{\onto}{\,\twoheadrightarrow\,}
\newcommand{\sonto}{\,\stackrel{\sim}{\twoheadrightarrow}\,}
%
%

\def\cb{\boldsymbol{\Omega}}

\def\bs{\backslash}

\newcommand{\rar}{\xrightarrow{}}

\nc{\env}{\mathrm{End}(V)}
\nc{\FT}{\mathcal{C}}

\numberwithin{equation}{section}
\numberwithin{theorem}{section}
\numberwithin{lemma}{section}
\numberwithin{proposition}{section}
\numberwithin{corollary}{section}
\numberwithin{example}{section}
\numberwithin{remark}{section}

\def\dr{\mathrm{DR}_{\bullet}}
\def\drm{\mathrm{DR}^{\bullet}}
\newcommand{\dual}{\text{!`}}
\newcommand{\Sh}{\mathrm{Sh}}
\newcommand{\ab}{\mathrm{ab}}

\newcommand{\TTr}{\overline{\Tr}}
\newcommand{\sym}{\operatorname{sym}}
\newcommand{\can}{\mathrm{can}}
\newcommand{\PBT}{\mathfrak{T}}
\newcommand{\clLPBT}{\langle\mathfrak{T}\rangle}
%
%
\newcommand{\twbs}{\mathbf{s}}

\newcommand{\twDR}{\mathsf{DR}}

\def\arbreBA{\vcenter{\xymatrix@R=2pt@C=2pt{
&&&&\\
&&&*{}\ar@{-}[ul] & \\
&&*{}\ar@{-}[uurr] \ar@{-}[uull] \ar@{-}[d]     &&\\
&&&&
}}}

\def\arbreAB{\vcenter{\xymatrix@R=2pt@C=2pt{
&&&&\\
&*{}\ar@{-}[ur] &&& \\
&&*{}\ar@{-}[uurr] \ar@{-}[uull] \ar@{-}[d]     &&\\
&&&&
}}}

\def\arbreABC{\vcenter{\xymatrix@R=1pt@C=1pt{
&&&&&&\\
&*{}\ar@{-}[ur] &&&&& \\
&&*{}\ar@{-}[uurr] &&&&\\
&&&*{}\ar@{-}[uuurrr] \ar@{-}[uuulll] \ar@{-}[d] &&&\\
&&&&&&
}}}

\def\arbreBAC{\vcenter{\xymatrix@R=1pt@C=1pt{
&&&&&&\\
&&&*{}\ar@{-}[ul] &&& \\
&&*{}\ar@{-}[uurr] &&&&\\
&&&*{}\ar@{-}[uuurrr] \ar@{-}[uuulll] \ar@{-}[d] &&&\\
&&&&&&
}}}

\def\arbreACB{\vcenter{\xymatrix@R=1pt@C=1pt{
&&&&&&\\
&*{}\ar@{-}[ur] &&&&& \\
&&&&*{}\ar@{-}[uull] &&\\
&&&*{}\ar@{-}[uuurrr] \ar@{-}[uuulll] \ar@{-}[d] &&&\\
&&&&&&
}}}

\def\arbreBCA{\vcenter{\xymatrix@R=1pt@C=1pt{
&&&&&&\\
&&&&&*{}\ar@{-}[ul] & \\
&&*{}\ar@{-}[uurr] &&&&\\
&&&*{}\ar@{-}[uuurrr] \ar@{-}[uuulll] \ar@{-}[d] &&&\\
&&&&&&
}}}

\def\arbreCAB{\vcenter{\xymatrix@R=1pt@C=1pt{
&&&&&&\\
&&&*{}\ar@{-}[ur] &&& \\
&&&&*{}\ar@{-}[uull] &&\\
&&&*{}\ar@{-}[uuurrr] \ar@{-}[uuulll] \ar@{-}[d] &&&\\
&&&&&&
}}}

\def\arbreCBA{\vcenter{\xymatrix@R=1pt@C=1pt{
&&&&&&\\
&&&&&*{}\ar@{-}[ul] & \\
&&&&*{}\ar@{-}[uull] &&\\
&&&*{}\ar@{-}[uuurrr] \ar@{-}[uuulll] \ar@{-}[d] &&&\\
&&&&&&
}}}

\def\arbreACA{\vcenter{\xymatrix@R=1pt@C=1pt{
&&&&&&\\
&*{}\ar@{-}[ur] &&&&*{}\ar@{-}[ul] & \\
&&&&&&\\
&&&*{}\ar@{-}[uuurrr] \ar@{-}[uuulll] \ar@{-}[d] &&&\\
&&&&&&
}}}

\setcounter{tocdepth}{1}

\date{May 19, 2015}

\title{Chern-Simons Forms and Higher Character Maps of\\ Lie Representations}

\author{Y. Berest}
\address{Department of Mathematics,
 Cornell University, Ithaca, NY 14853-4201, USA}
\email{berest@math.cornell.edu}
\author{G. Felder}
\address{Departement Mathematik,
Eidgenossische TH Z\"urich,
8092 Z\"urich, Switzerland}
\email{giovanni.felder@math.ethz.ch}
\author{S. Patotski}
\address{Department of Mathematics,
 Cornell University, Ithaca, NY 14853-4201, USA}
\email{apatotski@math.cornell.edu}
\author{A. C. Ramadoss}
\address{Department of Mathematics, Indiana University, Bloomington, IN 47405, USA}
\email{ajcramad@indiana.edu}
\author{T. Willwacher}
\address{Institut f\"{u}r Mathematik, Universit\"{a}t Z\"{u}rich, 8057 Z\"{u}rich, Switzerland}
\email{thomas.willwacher@math.uzh.ch}

\begin{document}
\begin{abstract}
This paper is a sequel to~\cite{BFPRW}, where we study the derived representation
scheme $ \DRep_{\g}(\mfa) $ parametrizing the representations of a Lie algebra $ \mfa $ 
in a reductive Lie algebra $ \g $. In \cite{BFPRW}, we constructed two canonical maps 
$\, \Tr_{\g}(\mfa):\, \HC_{\bullet}^{(r)}(\mfa) \to \H_{\bullet}[\DRep_{\g}(\mfa)]^G $
and $ \Phi_{\g}(\mfa):\,\H_{\bullet}[\DRep_{\g}(\mfa)]^G  \to \H_{\bullet}[\DRep_{\h}(\mfa)]^{\mathbb W}  $ called the Drinfeld trace and the derived Harish-Chandra homomorphism,
respectively. In this paper, we give an explicit formula for the Drinfeld trace in terms of 
Chern-Simons forms. As a consequence, we show that, if $ \mfa $ is an abelian Lie algebra,
the composite map $ \Phi_{\g}(\mfa) \circ \Tr_{\g}(\mfa) $ is given by a canonical
differential operator defined on differential forms on $ A = \Sym(\mfa) $ and depending 
only on the Cartan data $ (\h, {\mathbb W}, P) $, where $ P \in \Sym(\h^*)^{\mathbb W} $. 
We prove a combinatorial formula for this operator that plays an important role in 
the study of derived commuting schemes in \cite{BFPRW}.
\end{abstract}
\maketitle
\tableofcontents

\section{Introduction and Motivation}

Let $ \g $ be a finite-dimensional reductive Lie algebra defined over a field $k$
of characteristic zero. For an arbitrary Lie algebra $ \mfa $, the set
of all representations (i.e., Lie algebra homomorphisms) of $ \mfa $ in $ \g $ 
has a natural structure of an affine $k$-scheme called the representation scheme 
$ \Rep_{\g}(\mfa) $. In \cite{BFPRW}, we constructed a derived version of 
 $ \Rep_{\g}(\mfa) $ by extending the representation functor 
$ \Rep_{\g} $ to the category of differential graded (DG) Lie algebras and 
deriving it in the sense of non-abelian homological algebra \cite{Q1}.
The corresponding derived  scheme $ \DRep_{\g}(\mfa) $ is represented by a 
commutative DG algebra which (to simplify the notation) we also denote 
by $ \DRep_{\g}(\mfa) $. The DG algebra $ \DRep_{\g}(\mfa) $ is well defined up 
to homotopy; its homology $ \H_\bullet[\DRep_{\g}(\mfa)] $ depends only on $ \mfa $ and $ \g $, with 
$ \H_0[\DRep_{\g}(\mfa)] $ being canonically isomorphic to the coordinate ring 
$ k[\Rep_{\g}(\mfa)] $ of $ \Rep_{\g}(\mfa) $. Following \cite{BFPRW}, 
we call $ \H_\bullet[\DRep_{\g}(\mfa)] $ the {\it representation homology} of
$ \mfa $ in $ \g $ and denote it by $ \H_\bullet(\mfa, \g) $. The algebraic group
$G$ associated with $ \g $ acts naturally on $ \Rep_{\g} $ via the adjoint
representation. This action extends, by functoriality, to representation 
homology, and we define $ \H_\bullet(\mfa, \g)^G $ to be the $G$-invariant part 
of $ \H_\bullet(\mfa, \g) $ (see Section~\ref{DREP} for a brief review of the
above constructions).

In general, computing $ \H(\mfa, \g) $ and $ \H_\bullet(\mfa, \g)^G $ is a difficult
problem: an explicit presentation for these algebras is known only in a few nontrivial cases. 
It is therefore natural to look for some canonical maps relating representation homology to more
accessible invariants. In \cite{BFPRW}, we defined two such maps\footnote{We will review 
the construction of these maps in Section~\ref{SecCSCE} below.}:
\begin{equation}
\la{eq1}
\Tr_{\g}(\mfa):\, \HC_{\bullet}^{(r)}(\mfa) \to \H_{\bullet}(\mfa, \g)^G 
\end{equation}
\begin{equation}
\la{eq2}
\Phi_{\g}(\mfa):\, \H_{\bullet}(\mfa, \g)^G \to \H_{\bullet}(\mfa, \h)^{\mathbb W}
\end{equation}
which we called the {\it Drinfeld trace} and the {\it derived Harish Chandra homomorphism},
respectively. The Drinfeld trace is defined on the $r$-th Hodge component of the cyclic
homology of the universal enveloping algebra $ {\mathcal U} \mfa $ of the Lie algebra $ \mfa $ 
and depends on the choice of a $G$-invariant polynomial $ P \in \Sym^r(\g^*)^G $
on the Lie algebra $ \g$: it should be thought of as a derived extension
of the classical character maps for Lie representations. The Harish Chandra 
homomorphism $ \Phi_{\g}(\mfa) $ is a graded algebra homomorphism that extends to 
representation homology the natural restriction map 
$\, k[\Rep_{\g}(\mfa)]^G \to k[\Rep_{\h}(\mfa)]^{\mathbb W} $, where $ \h \subset \g $
is a Cartan subalgebra of $ \g $ and $ {\mathbb W} $ is the associated Weyl group.

The maps \eqref{eq1} and \eqref{eq2} are particularly interesting when $ \mfa $ is a two-dimensional abelian Lie algebra. In this case, $ \DRep_{\g}(\mfa) $ represents the derived  commuting scheme of the Lie algebra $ \g $, a higher homological extension of the 
classical commuting scheme $ \Rep_{\g}(\mfa) = \{(x,y) \in \g \times \g \,:\,[x,y] = 0\} $
parametrizing the pairs of commuting elements in $ \g $. It turns out 
that if $ \mfa $ is graded, with generators having opposite parities, then 
$ \H_{\bullet}(\mfa, \g)^G $ is a free graded commutative algebra 
generated by the (images of) Drinfeld traces \eqref{eq1} corresponding to free polynomial 
generators $ \{P_1, \ldots, P_l\} $ of the invariant algebra $ \Sym(\g^*)^G $. (As shown in \cite{BFPRW}, this result is equivalent to the strong Macdonald conjecture proved in \cite{FGT}.) 
On the other hand, when both generators of $ \mfa $ are even (e.g., have homological degree $0$), it is conjectured in \cite{BFPRW} (with some evidence provided) that the Harish Chandra 
homomorphism $ \Phi_{\g}(\mfa) $ is actually an algebra isomorphism.

In the present paper, we study the maps \eqref{eq1} and \eqref{eq2} for arbitrary 
DG Lie algebras. We give a general formula for the Drinfeld trace  
in terms of Chern-Simons forms in a convolution DG algebra canonically attached to 
the pair $ (\mfa, \g) $ (see Theorem~\ref{Drintr}). 
Our construction is inspired by an idea of Beilinson \cite{Be} who suggested 
that Chern-Simons classes of canonical $\g$-torsors on convolution algebras should give 
additive analogues of Borel regulator maps\footnote{In a sense, our Drinfeld trace map 
is Koszul dual to the `additive' regulator map, the construction of which
is outlined in \cite{Be}, Section~A6. We will give a detailed account on 
Beilinson's construction in Appendix~A.}. 
As a consequence, we show that the composite map
%
\begin{equation}
\la{eq3}
\Phi_{\g}(\mfa) \circ \Tr_{\g}(\mfa):\, \HC_{\bullet}^{(r)}(\mfa) \to \H_{\bullet}(\mfa, \g)^G \to \H_{\bullet}(\mfa, \h)^{\mathbb W}
\end{equation}
which we refer to as the {\it reduced trace} $ \Tr_{\h}(\mfa)$, 
depends only on the Cartan data $ (\h, {\mathbb W}, P) $ (provided 
$ \Sym(\g^*)^G $ is identified with $ \Sym(\h^*)^{\mathbb W} $ via
the Chevalley isomorphism). If $ \mfa $ is an abelian Lie algebra (of any dimension), 
the cyclic homology groups
$ \HC_{\bullet}^{(r)}(\mfa) $ can be expressed in terms of (algebraic) differential forms
on the vector space $ \mfa $, and the reduced trace $ \Tr_{\h}(\mfa) $ is given 
by a canonical ${\mathbb W}$-invariant differential operator defined on de Rham algebra of 
$ \Sym(\mfa)  $. We will give an explicit combinatorial formula for this operator, computing thus the higher character maps for all symmetric algebras (see Theorem~\ref{trdiffop} and 
Examples~\ref{exs44}). We should mention that this formula plays a crucial role in \cite{BFPRW}, where it is used to verify the Harish Chandra quasi-isomorphism conjecture for the classical Lie algebras $\, \gl_n,\, \mathfrak{sl}_n,\, \mathfrak{so}_{2n+1}\,$ and $\,\mathfrak{sp}_{2n}\,$ 
in the stable limit $\, n \to \infty $; however, it is given in \cite{BFPRW} without a proof.


We now proceed with a summary of the contents of the paper.
Section~\ref{S2} contains preliminary material on de Rham complexes and cyclic homology of commutative algebras and cocommutative coalgebras. Although most of this material is 
well known, we pay a special attention to identifications between various descriptions of cyclic homology. In particular, Theorem~\ref{conj1} provides an explicit formula for the isomorphism between the (reduced) cyclic homology of a symmetric algebra and its Koszul dual symmetric coalgebra. The proof of this theorem is somewhat technical and fairly long; we give it in Appendix~B.

In Section~\ref{SecCSCE}, after brief recollections on derived representation schemes,  we prove our main result (Theorem~\ref{Drintr}), expressing Drinfeld traces in terms of Chern-Simons
forms. We also deduce two consequences -- Lemma~\ref{dHCm} and 
Lemma~\ref{thruTr1} -- which are important for our trace computations. Lemma~\ref{dHCm} implies 
that the reduced trace map $ \Tr_{\h}(\mfa) $ depends only on $ \h $ and the choice of a
${\mathbb W}$-invariant polynomial on $ \h $, while Lemma~\ref{thruTr1} essentially 
reduces the computation of $ \Tr_{\h}(\mfa) $ to the rank one situation.

In Section~\ref{TSA}, we compute the reduced trace maps for any finite-dimensional
abelian Lie algebra $ \mfa $ (or equivalently, for the 
symmetric algebra $A = {\mathcal U}(\mfa) = \Sym(\mfa)$). The main result of this section (Theorem~\ref{trdiffop}) 
gives an explicit formula for these traces in terms of a canonical differential operator 
acting on differential forms on $A$.

In Section~\ref{S5}, we give another application of Theorem~\ref{Drintr}. 
This relies on several results from the literature. First, by a general formula 
from \cite{BKR}, the reduced trace maps for $A = \Sym(\mfa)$ can be expressed
in terms of symmetrized sums of Taylor components of an $A_\infty$-quasi-isomorphism 
inverting the minimal resolution of $A$. Applying a homological perturbation formula
due to Merkulov \cite{M}, we compute these components and transform 
the sums over $ {\mathbb S}_n $ into sums over (equivalence classes of) binary trees. 
Comparing then the result with Theorem~\ref{Drintr} gives an interesting 
combinatorial identity that expresses the sums of $A_\infty$-components 
over binary trees in terms of explicit Chern-Simons forms (see Corollary~\ref{cstree}).

As mentioned above, our main formula for Drinfeld traces can be viewed as  
dual to Beilinson's formula for `additive regulators'. However, in \cite{Be}, 
Beilinson gives only a brief sketch of his construction with no explicit formulas. 
In \cite{F}, Feigin elaborates on ideas of \cite{Be} focusing on current Lie algebras $ {\mathfrak G}^M $ on smooth manifolds; he proposes an interesting conjecture on 
the structure of cohomology of $ {\mathfrak G}^M $ but still gives no explicit formula for 
Beilinson's map. An explicit formula appears in Teleman's paper \cite{Te}, where it is 
given simply as part of a definition ({\it cf.} \cite[(2.2)]{Te}), with no reference to 
\cite{Be}. We bridge this gap in Appendix~A, where we provide a detailed account of~\cite[Section A.6]{Be} and show, in particular, that Teleman's formula indeed arises
from Beilinson's construction. We also construct another natural map from Lie homology of current Lie algebras to cyclic homology of commutative algebras and compare it to Beilinson's
one. The relation between these two maps ({\it cf.} Theorem~\ref{csandabscomp}) plays a 
key role in the proof of our main results in Section~\ref{SecCSCE}.

Finally, the second appendix -- Appendix~B -- contains a detailed proof of Theorem~\ref{conj1}. 
This result seems to be new and may be of independent interest.

\subsection*{Notation} Throughout this paper, $ k $ denotes a base field of characteristic zero. 
An unadorned tensor product $\, \otimes \,$ stands for the tensor product over $k$.
Unless stated otherwise, all differential graded (DG) objects are equipped with differentials of degree $-1$. For a graded vector space $ W $, $\bSym(W)$ denotes the
graded symmetric algebra and $\bSym^{c}(W)$ the graded symmetric coalgebra of $W$.

\section{The de Rham complex  and cyclic homology}
\la{S2}
In this section, we recall some basic facts on cyclic homology of commutative algebras
needed for the present paper. We also record the dual statements for cocommutative coalgebras.
The only (apparently) new result in this section is Theorem~\ref{conj1}, the proof of which
is given in the appendix.

\subsection{The de Rham algebra of a commutative DG algebra}
\la{de Rham}
Let $ \cDGA_k $ denote the category of commutative unital DG $k$-algebras with differential of degree $-1$. Recall that, 
for $A \in \cDGA_k$, the DG module $\Omega^1_A$ of K\"{a}hler differentials 
of $A$ is defined as the free DG $A$-module generated by the symbols $ da $ (for $ a \in A$), 
modulo the relations
$$ 
d(\delta \, a)\,=\,\delta \,da\,,\,\,\,\,\, d(ab)\,=\,da.b+a.db\,\text{.}$$
Here, $\delta$ denotes the differentials intrinsic to $A$ and $\Omega^1_A$. For $a\,\in\,A$ homogeneous, $da$ has the same homological degree in $\Omega^1_A$ as $a$ has in $A$.

Consider the (homologically) graded algebra $\bSym_A \Omega^1_A[1]$. Let $d$ denote the (unique) degree $1$ derivation on $\bSym_A \Omega^1_A[1]$ satisfying $d(a)=da\,,\,\,\,\, d(da)\,=\,0\,,\,\,\,\, \forall \,\,a \in A\,\text{.}$ Let $\delta$ denote the (unique) degree $-1$ derivation on $\bSym_A \Omega^1_A[1]$ induced by the differential intrinsic to $A$. It is easy to verify that $d$ and $\delta$ are square $0$ and (anti)commute. Hence, $(\bSym_A \Omega^1_A[1], \delta, d)$ is an algebra object in the category of mixed complexes: we refer to it as the {\it mixed algebra} of $A$. Note that $k \hookrightarrow \bSym_A \Omega^1_A[1]$ via the unit map $k \hookrightarrow A$. We call the mixed complex $ (\bSym_A \Omega^1_A[1]/k, \delta, d)$ the {\it mixed de Rham complex} of $A$ and denote it by $\drm(A)$.

On the other hand, $A$ may be viewed as a {\it cohomologically graded} algebra by inverting degrees $ A^i = A_{-i} $. In this case, $d+\delta$ can also be viewed as a degree $+1$ differential on $\bSym_A \Omega^1_A[-1]$. We call the {\it cochain algebra} $(\bSym_A \Omega^1_A[-1], d+\delta)$ the {\it de Rham algebra} of $A$ and denote it by $\dr(A)$.

Note that $\bSym^q_A (\Omega^1_A[1])\,\cong\, \wedge^q_A\,\Omega^1_A [k]$: explicitly, this isomorphism is given by
$$ a_0da_1\ldots da_q \mapsto (-1)^{|a_2|+2|a_3|+\ldots +(q-1)|a_q|} a_0da_1\ldots da_q \,\text{.}$$
We refer to the complex $(\wedge^q_A\,\Omega^1_A, \delta)$ as the {\it complex of de Rham $q$-forms of $A$} and denote it by $\Omega^q_A$. Let $\Omega^q_{\bar{A}}$ denote $\Omega^q_A$ when $q>0$ and $A/k$ when $q=0$.

\subsection{Cyclic homology of commutative DG algebras} \la{cychomcommdga}

Recall that if $(\mathcal M, \delta, d)$ is a mixed complex, its {\it cyclic homology} is the homology of the total complex $\mathrm{CC}(\mathcal M)$ of the double complex defined by
\[ \mathrm{C}_{p,q}(\mathcal M)\,=\,   \begin{cases}
                                                                   \mathcal M_{q-p}\, , \hfill &  p \geq 0\\
                                                                    0   \,, \hfill & p <0 \\
                                                               \end{cases} \]
 The horizontal differential $\mathrm{C}_{p,q}(\mathcal M) \rar \mathrm{C}_{p-1,q}(\mathcal M)$ is $d$ and the vertical differential $\mathrm{C}_{p,q}(\mathcal M) \rar \mathrm{C}_{p,q-1}(\mathcal M)$ is $\delta$.

The following theorem relates the cyclic and de Rham homologies of a commutative DG algebra.
\begin{theorem}[\cite{L}, Theorem 5.4.7] 
\la{t1}
Assume that $A\,\in\,\cDGA_k$ is smooth as a graded algebra. Then, $\rHC_{\bullet}(A)$ is canonically isomorphic to the cyclic homology of the mixed de Rham complex $\drm(A)$.
\ethm
There is a natural direct sum decomposition
$$ 
\mathrm{CC}[\drm(A)] \,=\, \bigoplus_{i \geq 0}\, \mathrm{CC}^{(i)}[\drm(A)]\ ,
$$
where
$\mathrm{CC}^{(i)}[\drm(A)]\,:=\, \oplus_{n=i}^{2i}\, \Omega^{2i-n}_{\bar{A}}[n]\,$
is the total complex of the double complex $\mathrm{C}^{(i)}$, where
\[
\mathrm{C}^{(i)}_{p,q}\,=\, \begin{cases}
                                                    [\Omega^{i-p}_{\bar{A}}]_{q-i} \,, \hfill&  p \geq 0 \\
                                                             0\,, \hfill & p<0
                                                     \end{cases} \]
The horizontal differential $\mathrm{C}^{(i)}_{p,q} \rar \mathrm{C}^{(i)}_{p-1,q}$ is $d$ and the vertical differential $\mathrm{C}^{(i)}_{p,q}\rar \mathrm{C}^{(i)}_{p,q-1}$ is $\delta$.
\begin{proposition}[\cite{L}, Proposition~5.4.9]
\la{hodge}
The isomorphism of Theorem~\ref{t1} is compatible with Hodge decompositon: in other words, it induces a canonical isomorphism
$$\rHC^{(i)}(A)\,\cong\,\H_{\bullet}(\mathrm{CC}^{(i)}[\drm(A)])\,\text{.}$$
\eprop
Let $A\,=\,(\Sym(W),\delta)$ where $W$ is a finite-dimensional graded $k$-vector space. Then, the de Rham algebra of $A$ is acyclic with respect to the de Rham differential. In other words,  $\mathrm{CC}^{(i)}[\drm(A)]$ is quasi-isomorphic to $\Omega^i_{\bar{A}}/d\Omega^{i-1}_{\bar{A}}[i]$, with the quasi-isomorphism being induced by the projection
\begin{equation} \la{pcyciso} \mathrm{p}\,:\,\mathrm{CC}^{(i)}[\drm(A)] \,=\, \begin{diagram} \mathrm{Tot}(\mathrm{C}^{(i)}) & \rOnto &
\mathrm{C}^{(i)}_{0,\bullet} \,=\, \Omega^i_{\bar{A}}[i] & \rOnto & \Omega^i_{\bar{A}}/d\Omega^{i-1}_{\bar{A}}[i] \end{diagram}\,\text{.} \end{equation} 
As a consequence of this and Theorem~\ref{t1}, we obtain
\begin{theorem}[\cite{L}, Theorem 5.4.12]
\la{t2}
For $A\,=\,(\Sym(W),\delta)$, there is a canonical isomorphism
$$
\rHC_n(A)\,\cong\, \bigoplus_{i \geq 0} \, \H_{n-i}[(\Omega^i_{\bar{A}}/d\Omega^{i-1}_{\bar{A}}, \delta)]
\,\text{.}
$$
In other words, $\rHC_{\bullet}(A)$ is canonically isomorphic to $\H_{\bullet}[\drm(A)/d\drm(A),\delta]$.
\ethm

Recall that the reduced cyclic homology of $A$ is the homology of Connes' (reduced) cyclic complex $\overline{\mathrm{C}}^{\lambda}(A)$. Since $A\,=\,(\Sym(W),\delta)$ is commutative, there is a Hodge decomposition
$ \overline{\mathrm{C}}^{\lambda}(A) \,=\, \oplus_{i=1}^{\infty}\, \overline{\mathrm{C}}^{\lambda,(i)}(A)\,$.
The isomorphism in Theorem~\ref{t2} is compatible with Hodge decompositon. To be precise, consider the antisymmetrization map
$$\varepsilon\,:\, \bar{A} \otimes \Sym^i(\bar{A}[1]) \rar \bar{A} \otimes \bar{A}[1]^{\otimes i}\,,\,\,\,\, a_0 \otimes a_1 \wedge \ldots \wedge a_i \mapsto \sum_{\sigma \,\in\,\Sb_i} (-1)^{f(\sigma, a_1,\ldots a_i)}(a_0, a_{\sigma(1)},\ldots,a_{\sigma(i)})\,,$$
where $(-1)^{f(\sigma, a_1,\ldots a_i)}$ is obtained from the Koszul sign rule when permuting elements with degrees $|a_1|+1,\ldots,|a_i|+1$ via $\sigma$. By~\cite[Section 2.3.5]{L},
$\varepsilon$ induces a map of complexes
\begin{equation}
\la{epscyclic}
\varepsilon\,:\, \Omega^i_{\bar{A}}/d\Omega^{i-1}_{\bar{A}}[i] \rar \overline{\mathrm{C}}^{\lambda,(i)}(A)\,,
\end{equation}
which is known to be a quasi-isomorphism when $A=(\Sym(W),\delta)$. Its inverse is given by
the composite map
\begin{equation}
\la{picyciso}
I_{\rm HKR}\,:\
\overline{\mathrm{C}}^{\lambda,(i)}(A) \into
\overline{\mathrm{C}}^{\lambda}(A) \onto
A \otimes \bar{A}[1]^{\otimes i}/\mathrm{Im}(1-\tau)
\onto \Omega^i_{\bar{A}}/d\Omega^{i-1}_{\bar{A}}[i]
\ ,
\end{equation}
where the last arrow is defined by $\, (a_0,\ldots ,a_i) \mapsto \frac{1}{i!}\, a_0da_1 \ldots da_i \,$. The quasi-isomorphism \eqref{picyciso} is induced by the classical Hochschild-Kostant-Rosenberg map on Hochschild chain complex of $A$; we therefore call $I_{\rm HKR}$ the
Hochschild-Kostant-Rosenberg map.

\subsection{The de Rham coalgebra of a cocommutative DG coalgebra} \la{code Rham}

We now formally dualize the constructions of Sections~\ref{de Rham},~\ref{cychomcommdga} replacing algebras by coalgebras. Let $C\,\in\,\cDGC_k$. The notion of comodule $\Omega^1_C$ of K\"{a}hler codifferentials is dual to that of the module of K\"{a}hler differentials (cf.~\cite[Section 4]{Q}). More precisely, $\Omega^1_C$ is the set of symmetric elements in the $C$-bicomodule $C \otimes C/\Delta(C)$. There is a universal (degree $0$) coderivation $\,d:\,\Omega^1_C \rar C\,$, 
$\, \omega \mapsto d\omega \,$.  The tensor coalgebra $\mathrm{T}^c_C(\Omega^1_C[-1])$ has a unique degree $1$ coderivation $d$ such that
$\,d(\omega)\,:=\,d\omega\,$ for all $\, \omega \in \Omega^1_C \,$ and
$\, d(\alpha) = 0 \,$ for all $\,\alpha \in C\,$. It also has a degree $-1$ coderivation $\delta$ induced by the (co)differential on $C$. It is easy to check that $d$ and $\delta$ are square zero and (anti)commute. Further, both $d$ and $\delta$ restrict to (co)differentials on $\bSym^c_C \Omega^1_C[-1]$, the largest cocommutative sub-coalgebra of $\Omega^1_C[-1]$. This makes $(\bSym^c_C \Omega^1_C[-1],\delta,d)$ a coalgebra object in the category of mixed complexes. Via the counit of $C$, the mixed coalgebra $(\bSym^c_C \Omega^1_C[-1],\delta,d)$ acquires a counit.  We call the kernel of this counit the {\it mixed de Rham complex} of $C$ and denote it by $\drm(C)$ and refer to $(\bSym^c_C \Omega^1_C[-1],\delta,d)$ as the {\it mixed coalgebra} of $C$. On the other hand, one can view $d+\delta$ as a degree $-1$ (co)differential on $\bSym_C \Omega^1_C[1]$: we refer to the resulting cocommutative DG coalgebra as the {\it de Rham coalgebra} of $C$ and denote it by $\dr(C)$. The image of $\bSym^c_C \Omega^1_C[-1]\,\cap\,[\Omega^1_C[-1]]^{\otimes_C q}$ in $[\Omega^1_C]^{\otimes_C q}[-q]$ will be denoted by $\Omega^q_C[-q]$. We refer to the complex $(\Omega^q_C,\delta)$ as the {\it complex of de Rham $q$ forms of $C$}.  For $C\,\in\,\cDGC_{k}$, let $\Omega^q_{\bar{C}}\,:= \,\Omega^q_C$ for $q>0$, with $\Omega^0_{\bar{C}}$ being the kernel of the counit from $\Omega^0_C$ to $k$.

Recall that if $(\mathcal M, \delta, d)$ is a mixed complex, its {\it negative cyclic homology} is the homology of the complex $\mathrm{CC}^{-}(\mathcal M)$, where  $\mathrm{CC}^{-}(\mathcal M)$ is the total complex of the double complex $\mathrm{C}(\mathcal M)$, where
$$\mathrm{C}_{p,q}(\mathcal M) \,=\, \begin{cases}
                                                                 \mathcal M_{q-p}\,, \hfill & p \leq 0 \\
                                                                        0\,, \hfill & p >0
                                                              \end{cases} $$
The horizontal differential $\mathrm{C}_{p,q}(\mathcal M)  \rar \mathrm{C}_{p-1,q}(\mathcal M)$ is $d$ and the vertical differential $\mathrm{C}_{p,q}(\mathcal M)  \rar \mathrm{C}_{p,q-1}(\mathcal M)$ is $\delta$. The next theorem is dual to Theorem~\ref{t1}.
\bthm \la{t4}
Let $C\,:=\, (\Sym^c (W),\delta)$ where $W$ is a graded $k$-vector space of finite (total) dimension.  Then there is a canonical isomorphism
$$ \rHC_{\bullet}(C)\,\cong\,\H_{\bullet}(\mathrm{CC}^{-}[\drm(C)]) \,\text{.}$$
\ethm
There is a natural direct sum decomposition
$$ \mathrm{CC}^{-}[\drm(C)] \,=\, \bigoplus_{i \geq 0} \,\mathrm{CC}^{-,(i)}[\drm(C)]\,,$$
where
$\mathrm{CC}^{-,(i)}\drm(C))\,:=\, \oplus_{n=i}^{2i}\, \Omega^{2i-n}_{\bar{C}}[-n]$
is the total complex of the double complex $\mathrm{C}^{(i)}$, where
$$ \mathrm{C}^{(i)}_{p,q}\,=\, \begin{cases}
                                               [\Omega^{p+i}_{\bar{C}}]_{q+i} \,,\hfill & p \geq 0 \\
                                                  0\,,\hfill & p<0
                                                  \end{cases} $$
The horizontal differential $\mathrm{C}^{(i)}_{p,q} \rar \mathrm{C}^{(i)}_{p-1,q}$ is $d$ and the vertical differential $\mathrm{C}^{(i)}_{p,q}\rar \mathrm{C}^{(i)}_{p,q-1}$ is $\delta$. Dual to Proposition~\ref{hodge}, we have
\bprop
The isomorphism in Theorem~\ref{t4} is compatible with Hodge decomposition: in other words, it induces an isomorphism
$$ \rHC_{\bullet}^{(i)}(C) \,\cong\, \H_{\bullet}(\mathrm{CC}^{-,(i)}[\drm(C)]) \,,\quad \forall \,i \geq 0\,\text{.}$$
\eprop

Note that when $C=(\Sym^c(W),\delta)$ where $W$ is a finite-dimensional graded vector space,  $\mathrm{CC}^{-,(i)}(\drm(C))$ is quasi-isomorphic to $\Ker(d\,:\, \Omega^i_{\bar{C}} \rar \Omega^{i-1}_{\bar{C}})[-i]$. This quasi-isomorphism is induced by the natural inclusion
$$ \inc\,:\, \Ker(d\,:\,\Omega^i_C \rar \Omega^{i-1}_C)[-i] \hookrightarrow \mathrm{CC}^{-,(i)}(\drm(C))\,\text{.}$$
We thus obtain the following statement (which is dual to Theorem~\ref{t2}).
\bthm \la{t3}
 There is a canonical isomorphism
$$ \rHC_n(C)\,\cong\, \bigoplus_{q \geq 0}\, \H_{n+q}([\Ker(d\,:\,\Omega^q_C \rar \Omega^{q-1}_C),\delta])\,\text{.}$$
In other words, $\rHC_{\bullet}(C)$ is canonically isomorphic to $\H_{\bullet}[\Ker(d\,:\,\drm(C) \rar \drm(C)),\delta]$.
\ethm
Dually to Theorem~\ref{t2}, the isomorphism in Theorem~\ref{t3} respects Hodge decomposition. Further, $\rHC_{\bullet}(C)$ is the homology of Connes' reduced complex $\overline{\mathrm{C}}^{\lambda}(C)$ of $C$. Dually to~\cite[Lemma 1.2]{Q}, $ \overline{\mathrm{C}}^{\lambda}(C)\,=\, \cb(C)_{\n}[1]\,,$
where $\cb(C)$ is the cobar construction of $C$ and $R_{\n}\,:=\, R/(k+[R,R])$ for any $R\,\in\,\DGA_{k/k}$. Dually to Theorem~\ref{t2}, the isomorphism in Theorem~\ref{t3} is by the map
\begin{equation} \la{epscyccoalg}  \varepsilon\,:\, \overline{\mathrm{C}}^{\lambda}(C) \stackrel{\sim}{\rar} \bigoplus_i \, \Ker(d\,:\,\Omega^i(C) \rar \Omega^{i-1}(C))[-i] \,\end{equation}
obtained by taking (graded) linear dual of~\eqref{epscyclic} applied to $E\,:=\Hom_k(C,k)$. The inverse map is given by the (graded) dual of the Hochschild-Kostant-Rosenberg map. We refer to this last map as the {\it co-HKR} map and denote it by $I^c_{\rm HKR}$.

\subsection{The mixed Hopf algebra of a vector space} 
\la{symmalg}

In this subsection, we let $A\,:=\,\bSym (W)$ and $C\,:=\,\bSym^c(W[1])$ where $W$ is a finite-dimensional graded vector space (with trivial  differential). Note that $C$ is Koszul dual to $A$ in the sense of~\cite{LV}. The following lemma is easy to verify.
\blemma \la{kcodiff}
$\Omega^1_C$ is isomorphic the cofree $C$-comodule cogenetared by $W[1]$, i.e, $\Omega^1_C\,\cong\, W[1]\otimes_k C$. Under this isomorphism, the universal coderivation
$d\,:\,\Omega^1_C \rar C$ becomes the map
$$ C \otimes_k W[1] \rar C\,,\,\,\,\, v \otimes \alpha \mapsto v \cdot \alpha   \,,$$
where $\cdot$ denotes the product in $\bSym(W[1])$.
\elemma
Let $\mathcal H(W)\,:=\, \bSym(W \oplus W[1])$, equipped with the graded Hopf algebra structure of the symmetric algebra of the graded vector space $W \oplus W[1]$.
\bprop \la{pcode Rham}
The de Rham differential $d$ on $\mathcal H(W)$ makes $(\mathcal H(W), 0, d)$ a Hopf algebra object in the category of mixed complexes. As an algebra, $(\mathcal H(W), 0, d)$ is the mixed algebra of $A$. As a coalgebra, $(\mathcal H(W), 0,d)$ is the mixed coalgebra of $C$.
\eprop
\bproof
Indeed, as a graded coalgebra, $\mathcal H(W)\,=\, \bSym^c(W) \otimes \bSym^c(W[1])$. Futher, by Lemma~\ref{kcodiff}, for any $v_1,\ldots,v_q\,\in\, W$ and $\alpha\,\in\,\bSym^c(W[1])$, we have
$$d_C(v_1 \ldots v_q \otimes \alpha)\,=\, \sum_i  (-1)^{|v_i|(|v_{i+1}|+\ldots+|v_q|)} (-1)^{|v_1|+\ldots+\hat{|v_i|}+\ldots |+|v_q|} v_1 \ldots \hat{v}_i \ldots v_q \otimes s(v_i)\alpha\,,$$
where $d_C$ is the de Rham codifferential on the mixed coalgebra of $C$ and where $s:W \rar W[1]$ is the operator increasing homological degree by $1$. From this, one sees that $d_C$  {\it is equal to} the de Rham differential $d$. It follows that the de Rham differential is a differential of degree $1$ on the graded Hopf algebra $\mathcal H(W)$ and that $(\mathcal H(W),0,d)$ viewed as a DG coalgebra is equal to the mixed coalgebra of $C$. This verifies the first and third assertions in the desired proposition. The second assertion is obvious.
\eproof

\begin{remark} \la{rem1} By Proposition~\ref{pcode Rham}, the identity map is an isomorphism of mixed complexes between the mixed algebra of $A$ and the mixed coalgebra of $C$. However, under this isomorphism,  the $p$-forms on $A$ having coefficients of polynomial degree $q$ are identified with $q$-forms on $C$ having coefficients of polynomial degree $p$.
\end{remark}

\vspace{1ex}

Let $d_A$ denote the differential $d$ on the mixed de Rham complex $\drm(A) $ of $A$
and let $d_C$ denote the differential $d$ on $ \drm(C)$. Combining the isomorphisms of
Theorem~\ref{t2} and Theorem~\ref{t3} with the identifications of
Proposition~\ref{pcode Rham}, we have
\bprop \la{piso}
There is a natural isomorphism $\rHC_{\bullet}(A) \,\stackrel{\sim}{\to}\,\rHC_{\bullet+1}(C)$
given by the composite map
$$  \rHC_{n}(A)\,\cong \,
\begin{diagram} [\mathrm{Coker}(d_A)]_n & \rTo^{d_A} & [\Ker(d_A)]_{n+1} \end{diagram} \,=\, \ [\Ker(d_C)]_{n+1} \,\cong \, \rHC_{n+1}(C)\ .
$$
\eprop

On the other hand, for any  associative algebra $A\,\in\,\Alg_k$ and any cofibrant resolution $R \stackrel{\sim}{\rar} A$ in $\DGA_k$, there is a quasi-isomorphism of 
complexes\footnote{This quasi-isomorphism is constructed in \cite[Section 4.3]{BKR}, where it is denoted $\,s\tau^{\n}(\theta)$.}
\begin{equation}
\la{bkr422}
T \,:\, \overline{\mathrm{C}}^{\lambda}(A) \rar R_{\n}\, \,
\end{equation}
determined by a twisting cochain $ f: \bB(A) \rar R[1] $ whose components $f_n\,:\,A^{\otimes n} \rar R_{n-1}$ are the Taylor components of an $A_{\infty}$-inverse to the quasi-isomorphism $R \stackrel{\sim}{\rar} A$. Explicitly, \eqref{bkr422} is induced on $n$ chains by the map 
({\it cf.} \cite[Theorem~4.2]{BKR})
$$ T_n\,:\,A^{\otimes n+1} \rar R_{\n}\,,\,\,\,\quad \quad \,\,\,(a_0,\ldots,a_n)\, \mapsto \, \sum_{p \in \Z_{n+1}} (-1)^{nk} [f_n(a_p, a_{1+p}, \ldots, a_{n+p})]\,,$$
where $[f]$ is the image of $f \,\in\,R$ in $R_{\n}$.

Now, if $A\,=\,\bSym(W)$ and $C=\bSym^c(W[1])$, we take $\,R\,=\,\cb(C)\,$ to be the cobar construction of $C$, so that $R_{\n}\,=\, \overline{\mathrm{C}}^{\lambda}(C)[-1]$.
In this case, we have the following result which refines Proposition~\ref{piso}.

\bthm 
\la{conj1}
The isomorphism $\rHC_{\bullet}(A) \stackrel{\sim}{\to} \rHC_{\bullet+1}(C) $
induced by $ T $ is given by
$$ \begin{diagram} \rHC_{\bullet}(A) & \rTo^{I_{\rm HKR}}_{\cong} & [\mathrm{Coker}(d_A)]_{\bullet} &\rTo^{d}_{\cong} &  [\Ker(d_C)]_{\bullet+1} & \rTo^{I^c_{\rm HKR}}_{\cong}  &\rHC_{\bullet+1}(C) \end{diagram}\,,$$
where $I_{\rm HKR}$ and $I^c_{\rm HKR}$ are the Hochschild-Kostant-Rosenberg maps
defined in Section~\ref{cychomcommdga} and Section~\ref{code Rham}, respectively.
\ethm

A detailed proof of Theorem~\ref{conj1} will be given in Appendix~\ref{HKRcoHKR}.

\section{Chern-Simons forms and Drinfeld traces} 
\la{SecCSCE}

\subsection{Derived representation schemes of Lie algebras}
\la{DREP}
We review our basic construction of derived representation schemes of Lie algebras 
and associated character maps. For details and proofs, we refer the reader to 
\cite{BFPRW}, Sections 6 and 7.

\subsubsection{Lie representation functor}
Let $ \g $ be a finite-dimensional Lie algebra over $k$. Given an (arbitrary) Lie 
algebra $ \mfa \in \LAlg_k $, we are interested in classifying the representations
of $ \mfa $ in $ \g $. The corresponding moduli scheme $ \Rep_{\g}(\mfa) $ is defined 
by its functor of points:
\begin{equation*}
\la{repg}
\Rep_{\g}(\mfa):\, \cAlg_k \to \Sets\ , \quad
B \mapsto \Hom_{\tt Lie}(\mfa, \, \g(B))
\end{equation*}
that assigns to a commutative $k$-algebra $B$ the set of families of representations of $ \mfa $
in $ \g $ parametrized by the $k$-scheme $ \Spec(B) $. The functor $ \Rep_{\g}(\mfa) $ is represented 
by a commutative algebra $ \mfa_\g $, which has the following canonical presentation ({\it cf.} \cite[Proposition~6.1]{BFPRW}):
\begin{equation}
\la{ag}
\mfa_{\g} \, = \,\frac{\Sym_k(\mfa \otimes \g^*)}{
\langle\!\langle\, (x\otimes \xi^*_{1})\cdot (y \otimes \xi^*_{2}) -
(y \otimes \xi^*_{1})\cdot (x \otimes \xi^*_{2}) - [x, y] \otimes \xi^* \,\rangle\!\rangle}\ ,
\end{equation}
where $ \g^* $ is the vector space dual to $ \g $ and $\,\xi^* \mapsto \xi^*_{1} \wedge \xi^*_{2}\,$ is the linear map $\,\g^* \to \wedge^2 \g^* $ dual to the Lie bracket on $ \g $. 
The universal representation $\,\varrho_{\g}: \mfa \to \g(\mfa_\g) \,$ is given by the 
natural map
$$
\mfa \to \mfa \otimes \g^* \otimes \g \into \Sym_k(\mfa \otimes \g^*) \otimes \g \onto \mfa_{\g} \otimes \g = \g(\mfa_\g)\ ,\quad
x \mapsto \sum_i\, [x \otimes \xi_i^*] \otimes \xi_i\ ,
$$
where $ \{\xi_i \} $ and $\{\xi_i^*\} $ are dual bases in $ \g $ and $ \g^* $.
The algebra $ \mfa_{\g} $ has a canonical augmentation
$\,\varepsilon: \mfa_{\g} \to k \,$ induced by the zero map $\, \mfa \otimes \g^* \to 0 \,$.  
Thus the assignment $\,\mfa \mapsto \mfa_{\g} \,$ defines a functor with values in the 
category of augmented commutative algebras:
\begin{equation}
\la{repcl}
(\,\mbox{--}\,)_{\g}\,:\ \LAlg_k \to \cAlg_{k/k}\ .
\end{equation}
We call \eqref{repcl} the {\it representation functor} in $ \g $. Geometrically,  one can think of $ (\mfa_{\g}, \varepsilon)  $ as a coordinate ring $ k[\Rep_{\g}(\mfa)] $ of the based affine scheme $ \Rep_{\g}(\mfa) $, with the basepoint corresponding to the trivial representation.

Next, observe that, for any
$ \mfa \in \LAlg_k $, the Lie algebra $ \g $ acts naturally on $ \mfa_{\g} $ by
derivations: this action is induced by the coadjoint action $ \ad_{\g}^* $ of $ \g $
and is functorial in $ \mfa $. We write $\,(\,\mbox{--}\,)_{\g}^{\ad \g}\,:\,
\LAlg_k \to \cAlg_{k/k} $ for the subfunctor of 
$ (\,\mbox{--}\,)_{\g} $ defined by taking the $\ad_{\g}^*$-invariants:
$$
\mfa_{\g}^{\ad \g} := \{x \in \mfa_{\g}\,:\,\ad_{\xi}^*(x) = 0\, ,\, \forall\,\xi\in \g\}\ .
$$
If $ \g $ is a reductive Lie algebra and $ G $ is the associated algebraic group, 
the algebra $ \mfa_{\g}^{\ad \g} $ represents the affine 
quotient scheme $\,\Rep_{\g}(\mfa)/\!/G \,$ parametrizing the closed orbits of 
$ G $ in $\,\Rep_{\g}(\mfa) \,$. 

\subsubsection{Derived functors}
\la{DF}
In general, the scheme $ \Rep_{\g}(\mfa) $ is quite singular.
One way to `resolve singularities' is to replace $ \Rep_{\g}(\mfa) $ by a smooth 
DG scheme $ \DRep_{\g}(\mfa) $ having $ \Rep_{\g}(\mfa) $ as its `0-th homology'. 
This approach to resolution of singularities is advocated in \cite{CK}, where it is 
applied to a number of classical moduli problems in algebraic 
geometry. Our construction of $ \DRep_{\g}(\mfa) $ is more algebraic and inspired 
by \cite{BKR}. The idea is to work with the representation functor \eqref{repcl} 
(instead of the representation scheme $ \Rep_{\g}(\mfa) $) and define $ \DRep_{\g}(\mfa) $ 
in terms of the derived functor of \eqref{repcl} using the `abstract' 
homotopy theory in the category of DG Lie algebras.

In more detail, the functor \eqref{repcl} can be extended to the category of
DG Lie algebras by formula \eqref{ag}:
\begin{equation}
\la{drepg}
(\,\mbox{--}\,)_{\g}\,:\ \DGL_k \to \cDGA_{k/k}\ ,\quad \mfa \mapsto \mfa_{\g}\ .
\end{equation}
It is shown in \cite{BFPRW} that, for a fixed
$ \mfa \in \DGL_k $, the corresponding commutative DG algebra $ \mfa_{\g} $ represents 
an affine DG scheme parametrizing the DG Lie representations of $ \mfa $ in $ \g $. 
The homotopy theories in $ \DGL_k $ and $ \cDGA_{k/k} $ are determined by
the classes of weak equivalences which in both cases are taken to be the quasi-isomorphisms. 
The corresponding homotopy categories $ \Ho(\DGL_k) $ and 
$ \Ho(\cDGA_{k/k}) $ are thus defined by formally inverting all morphisms in 
$ \DGL_k $ and $ \cDGA_{k/k} $ inducing isomorphisms on homology. Now, the key point is that, although the functor \eqref{drepg} is not homotopy invariant 
(it does not preserve quasi-isomorphisms and hence does not descend to $ \Ho(\DGL_k) $), 
it is a left Quillen functor and hence has a well-behaved left derived functor (see \cite[Theorem~6.4]{BFPRW})
$$
\L(\,\mbox{--}\,)_{\g}\,:\,\Ho(\DGL_k) \rar \Ho(\cDGA_{k/k})\ .
$$
The derived functor $ \L(\,\mbox{--}\,)_{\g} $ provides, in a precise sense, the `best possible' 
approximation to $ (\,\mbox{--}\,)_{\g} $ at the level of homotopy categories. 
For a fixed DG Lie algebra $ \mfa \in \DGL_k $, we then define $\,
\DRep_{\g}(\mfa)\,:=\,\L(\mfa)_{\g}\,$. 

In a similar fashion, we construct a derived functor of the invariant functor 
$\,(\,\mbox{--}\,)_{\g}^{\ad \g} $:
$$ 
\L(\,\mbox{--}\,)_{\g}^{\ad \g}\,:\,\Ho(\DGL_k) \rar \Ho(\cDGA_{k/k})\ ,
$$
and define $\,\DRep_{\g}(\mfa)^{\ad\,\g}\,:=\, \L(\mfa)_{\g}^{\ad\,\g}\,$. 
Notice that we use the notation  $ \DRep_{\g}(\mfa) $ and $ \DRep_{\g}(\mfa)^{\ad\,\g} $ for 
the commutative DG algebras in $ \Ho(\cDGA_{k/k}) $ representing the eponymous derived schemes (rather than for the derived schemes themselves). 

As usual, derived functors are computed by applying the original functors to appropriate resolutions. In our case, to compute $ \L(\,\mbox{--}\,)_{\g} $ and $ 
\L(\,\mbox{--}\,)_{\g}^{\ad\,\g} $ we use the cofibrant resolutions in $\DGL_k$, which,
in practice, are given by semi-free DG Lie algebras ({\it cf.} Section~\ref{rhlh}). 
Thus,  we have isomorphisms
$$
\DRep_{\g}(\mfa) \cong (Q\mfa)_{\g}\ ,\quad \DRep_{\g}(\mfa)^{\ad \g} 
\cong (Q \mfa)^{\ad \g} \ ,
$$
where $ Q\mathfrak{a} $ is a(ny) cofibrant resolution of $\mathfrak{a}$ in $\DGL_k$.

Finally, for any DG Lie algebra $ \mfa $, we define the {\it representation 
homology} of $ \mfa $ in $ \g $ by
$$ 
\H_{\bullet}(\mathfrak{a},\g)\,:=\, \H_{\bullet}[\DRep_{\g}(\mathfrak{a})] \ .
$$
This is a graded commutative $k$-algebra, which
depends on $ \mfa $ and $ \g $ (but not on the choice of resolution of $\mfa$).
If $ \mfa \in \LAlg_k $ is an ordinary Lie algebra, then there is an isomorphism of commutative algebras $ \H_0(\mathfrak{a},\g) \cong \mfa_{\g} $ which justifies the name
`derived representation scheme' for $\DRep_{\g}(\mfa)$. In addition, if $ \g $
is a reductive Lie algebra, then
$$
\H_{\bullet}[\DRep_{\g}(\mfa)^{\ad \g}]\, \cong \, \H_{\bullet}(\mathfrak{a},\g)^{\ad \g} .
$$
Thus, the homology of $ \DRep_{\g}(\mfa)^{\ad \g} $ can be viewed as  
the invariant part of the representation homology of $ \mfa $.

\subsubsection{Representation homology vs Lie (co)homology}
\la{rhlh}
Next, we explain how representation homology is related to Lie cohomology.
A key to this relation is a fundamental theorem of Quillen \cite{Q2} that 
establishes a duality (an example of Koszul duality) between
the category of DG Lie algebras and the category $ \cDGC_{k/k} $ of 
cocommutative co-augmented DG coalgebras. Quillen's duality
is given by a pair of adjoint functors
\begin{equation*}
 \la{cbbccom}
\cb_{\mathtt{Comm}}\,:\,
\mathtt{DGCC}_{k/k} \rightleftarrows \DGL_k \,:\,\bB_{\mathtt{Lie}} 
\end{equation*}
inducing an equivalence of the corresponding homotopy categories
$\Ho(\cDGC_{k/k}) $ and $\Ho(\DGL_k) $. The functor $ \bB_{\mathtt{Lie}} $ is defined by the classical Chevalley-Eilenberg complex of a DG Lie algebra 
$\,\bB_{\mathtt{Lie}}(\mathfrak{a})\,:=\, \C(\mathfrak{a};k)\,$: it is a 
Lie analogue of the bar construction for associative algebras.  
The functor $ \cb_{\mathtt{Comm}} $ is defined by taking the
free Lie algebra of the graded vector space $ \bar{C}[-1] $, where 
$ \bar{C} $ is the cokernel of the co-augmentation map 
of a coalgebra $ C \in \cDGC_{k/k} $: this
is a Lie analogue of the classical cobar construction of Adams. 

A DG coalgebra $ C \in \cDGC_{k/k} $ is said to be {\it Koszul dual} to a Lie
algebra $ \mathfrak{a} $ if there is a quasi-isomorphism $\, 
\cb_{\mathtt{Comm}}(C) \stackrel{\sim}{\to} \mathfrak{a}\,$. Such quasi-isomorphisms 
is a common source of cofibrant resolutions in $\, \DGL_k $. In fact, we can choose a cofibrant resolution of $ \mfa $ 
in a functorial way by taking $ C  = \bB_{\mathtt{Lie}}(\mathfrak{a})\,$, but it is often
convenient to work with `smaller' coalgebras.
The cocommutativity of $C$ ensures that $\g^{\ast}(C)\,:=\,\g^{\ast} \otimes C$ has 
a natural structure of a DG Lie coalgebra in the same way as $ \g(B) = \g \otimes B $ has a natural structure of a DG Lie algebra for a commutative algebra $B$.
We let $ \C^c(\g^{\ast}(\bar{C});k) $ denote 
the Chevalley-Eilenberg complex of the Lie coalgebra $\g^{\ast}(C)$, which is defined by a dual construction  
to the classical Chevalley-Eilenberg complex of the Lie algebra $ \g(B) $. We also define
a {\it relative} Chevalley-Eilenberg complex $ \C^c(\g^{\ast}(C),\g^{\ast}; k) $ for the
pair of Lie coalgebras $ \g^{\ast}(C) \onto \g^{\ast} $, which is dual 
to the classical relative Chevalley-Eilenberg complex for the pair of Lie algebras $ \g \into \g(B) $.
Note that $ \C^c(\g^{\ast}(\bar{C});k) $ and $ \C^c(\g^{\ast}(C),\g^{\ast}; k) $ 
are naturally commutative DG algebras in the same way as the classical 
Chevalley-Eilenberg complexes $ \C(\g(B); k) $ and $ \C(\g(B),\g; k) $ are naturally cocommutative DG coalgebras. 

The next theorem is one of the main observations of \cite{BFPRW} (see {\it op. cit.}, Theorem~6.5). 
\bthm 
\la{t6.5bfprw}
Let $C \,\in\, \cDGC_{k/k}$ be Koszul dual to $\mathfrak{a}\,\in\, \DGL_k$. Then there are  isomorphisms in $\Ho(\cDGA_{k/k})${\rm :}
$$ \DRep_{\g}(\mathfrak{a})\,\cong\,\C^c(\g^{\ast}(\bar{C});k)\ ,\quad  \DRep_{\g}(\mathfrak{a})^{\ad\,\g}\,\cong\,
\C^c(\g^{\ast}(C),\g^{\ast};k) \ .
$$
\ethm
As a consequence of Theorem~\ref{t6.5bfprw}, we have
$$ 
\H_{\bullet}(\mathfrak{a},\g)\,\cong\, \H_{\bullet}(\g^{\ast}(\bar{C});k)\,,\,\,\,\,  \H_{\bullet}(\mathfrak{a},\g)^{\ad\,\g}
\,\cong\, \H_{\bullet}(\g^{\ast}(C),\g^{\ast};k) \,\text{.}$$
These isomorphisms can be interpreted by saying that representation homology is Koszul dual 
to the classical Lie homology of current Lie algebras.

\subsubsection{Drinfeld trace maps and cyclic homology} 
\la{secdrintrace}
Our next goal is to describe certain natural maps with values in representation homology.
These maps can be viewed as (derived) characters of finite-dimensional Lie representations.
From now on, we assume that $ \g $ is a reductive Lie algebra over $k$. We let
$ I(\g)\,:=\,\Sym(\g^{\ast})^{\ad\,\g}\,$ denote the space of invariant polynomials 
on $ \g $, and write $  I^r(\g) \subset  I(\g) $ for the subspace of homogeneous
polynomials of degree $ r $.

For a Lie algebra $ \mfa $ and a fixed integer $ r \ge 1 $, we define 
$\,\lambda^{(r)}(\mfa) := \Sym^r(\mathfrak{a})/[\mathfrak{a}, \Sym^r(\mathfrak{a})]\,$, 
which is the space of coinvariants of the adjoint representation of 
$ \mfa $ in $ \Sym^r(\mfa) $.
Note that the vector space $ \lambda^{(r)}(\mfa) $ comes with a natural map
$\,\mfa \times \mfa \times \ldots \times \mfa \to \lambda^{(r)}(\mfa) \,$, which is
the universal symmetric ad-invariant form on $ \mfa $ of degree $ r $.
The assignment $\,\mfa \mapsto \lambda^{(r)}(\mfa)\,$ defines a functor on the category of 
Lie algebras that naturally extends to the category of DG Lie algebras.
Theorem~7.1 of \cite{BFPRW} implies that $  \lambda^{(r)} $ has a left derived functor
$$  
\L \lambda^{(r)}:\, \Ho(\DGL_k) \rar \Ho(\Com_k)\ ,\quad 
\mathfrak{a} \mapsto \lambda^{(r)}(Q\mathfrak{a})\,,
$$
whose homology we denote by $\,\HC_{\bullet}^{(r)}(\mfa)\,$. 
If $ \mfa $ is an ordinary Lie algebra, 
$\,\HC_{\bullet}^{(1)}(\mfa) \,$ is isomorphic to  
the classical (Chevalley-Eilenberg) homology $ \H_\bullet(\mfa, k) $
of $ \mfa $,  while $\,\HC^{(2)}_{\bullet}(\mathfrak{a}) $ 
is the Lie analogue of cyclic homology introduced by Getzler and Kapranov 
in~\cite{GK}.

In general, for any DG Lie algebra $ \mfa \in \DGL_k $, we constructed 
in \cite{BFPRW} a natural isomorphism
\begin{equation} 
\la{liehodgedecomp}   
\rHC_{\bullet}(\mathcal U \mathfrak{a})\,\cong\, \bigoplus_{r=1}^{\infty}\, \HC_{\bullet}^{(r)}(\mathfrak{a}) \ , 
\end{equation}
which can be viewed as a Koszul dual of Hodge decomposition\footnote{More precisely, 
\eqref{liehodgedecomp} is the Hodge decomposition of cyclic homology
of $ \mathcal U \mathfrak{a} $ viewed as a cocommutative coalgebra. We note 
that, if $ \mfa $ is an abelian Lie algebra, \eqref{liehodgedecomp} does {\it not} 
coincide with the classical Hodge decomposition of the symmetric algebra
$ \mathcal U \mathfrak{a} = \Sym(\mfa ) $ ({\it cf.} \eqref{hodgesym}).}
of the (reduced) cyclic
homology of the universal enveloping algebra of $ \mfa $. This isomorphism
justifies the notation for the homology groups $\,\HC_{\bullet}^{(r)}(\mfa)\,$.

Now, observe that, for any commutative algebra $ B $, there is a natural
symmetric invariant $r$-linear form $\,\mfa(B) \times \mfa(B) 
\times \ldots \times \mfa(B) \to \lambda^{(r)}(\mfa) \otimes B \,$ on the
current Lie algebra $ \mfa(B) $. Hence, by the universal property of $ \lambda^{(r)} $,
we have a canonical map
\begin{equation}
\la{canB}
\lambda^{(r)}[\mathfrak{a}(B)] \rar \lambda^{(r)}(\mathfrak{a}) \otimes B \,\text{.}
\end{equation}
Applying $ \lambda^{(r)} $ to the universal representation $ \varrho_\g: \mfa \to  \g(\mfa_{\g}) $ 
and composing  with \eqref{canB}, we define 
\begin{equation} \la{maplambdad}  
\begin{diagram} 
\lambda^{(r)}(\mathfrak{a}) & \rTo 
& \lambda^{(r)}[\g(\mathfrak{a}_{\g})]
& \rTo & \lambda^{(r)}(\g) \otimes \mathfrak{a}_{\g} \end{diagram}\,\text{.} 
\end{equation}
On the other hand, for the Lie algebra $ \g $, we have a canonical (nondegenerate) pairing
\begin{equation}
\la{pair}
I^r(\g) \times \lambda^{(r)}(\g) \to k 
\end{equation}
induced by the linear pairing between $ \g^* $ and $\g$.
Replacing the Lie algebra $ \mfa $ in \eqref{maplambdad} by its cofibrant resolution
$ \mathcal L \sonto \mfa $ and using \eqref{pair}, we define the morphism of complexes
\begin{equation}
\la{dtrm}
\begin{diagram}  
I^{r}(\g) \otimes \lambda^{(r)}(\mathcal L) & \rTo^{\eqref{maplambdad}} & 
I^{r}(\g) \otimes \lambda^{(r)}(\g) \otimes \mathcal L_{\g} 
& \rTo^{\eqref{pair}} & \mathcal L_{\g} \end{diagram} \ .
\end{equation}
For a fixed polynomial $ P \in I^{r}(\g)$, this morphism induces a map
on homology
$\, 
\Tr_{\g}(\mathfrak{a})\,:\, 
\HC_{\bullet}^{(r)}(\mathfrak{a}) \rar \H_{\bullet}(\mathfrak{a},\g)\,
$,
which we call the {\it Drinfeld trace} associated to $P$. (We warn the reader
that $ \Tr_{\g}(\mathfrak{a}) $ does depend on the choice of $P$ but 
we suppress this in our notation.) It is easy to check that
the image of \eqref{dtrm} is contained in the invariant subalgebra
$ \mathcal L_{\g}^{\ad\,\g} $ of $ {\mathcal L}_{\g}$, hence the Drinfeld trace is actually 
a map
\begin{equation} \la{DTr}
\Tr_{\g}(\mathfrak{a}):\, \HC_{\bullet}^{(r)}(\mathfrak{a}) \rar \H_{\bullet}(\mathfrak{a},\g)^{\ad\,\g} \,\text{.}
\end{equation}
We will give an explicit formula for \eqref{DTr} in Section~\ref{MT} below.

\subsection{Chern-Simons forms}
\la{ChernSimons}

In this section, all DG algebras wil be {\it cohomologically graded}. Let $\mathcal A$ be a commutative DG algebra, and let $\g$ be a finite-dimensional Lie algebra.
Recall that a $\g$-valued {\it connection} on $\mathcal A$ is an element $ \theta \in
\mathcal A^1 \otimes \g$; its {\it curvature} is given by
$ \Omega := d\theta+\frac{1}{2}[\theta, \theta]$ in $\mathcal A^2 \otimes \g$, and it is
easy to verify that $d \Omega\,=\,[\Omega, \theta]$, which is usually called the Bianchi identity.

Now, fix $\, P \in I^{r+1}(\g) $, a homogeneous ad-invariant polynomial of degree $ r + 1 $. 
Given $\, \alpha\,\in\,\mathcal A \otimes \Sym^{r+1}(\g) \,$, regard $ P $ as a linear map 
$ \Sym^{r+1}(\g) \to k $ and define $ P(\alpha) $ by applying to $ \alpha $ the evaluation map
$\, \id_{\mathcal A} \otimes \mathrm{ev}_P\,:\, \mathcal A \otimes \Sym^{r+1}(\g) \rar \mathcal A\,$. Thus, $P(\alpha)$ is an element of $\mathcal A$ having the same cohomological degree as
$ \alpha\,$. A simple calculation, using the Bianchi identity, shows that
$\,dP(\Omega^{r+1}) = 0\,$ for any $ \theta \in \mathcal A^1 \otimes \g\,$.
Thus $ P(\Omega^{r+1}) $ is a cocycle in $ \mathcal A $ of degree $ 2r+2 $. In fact, this
cocycle is always exact, and among all coboundaries representing $ P(\Omega^{r+1}) $,
one can specify a natural element $ \TP(\theta) \in {\mathcal A}^{2r+1} $ called
the Chern-Simons form \cite{CS}. Explicitly, this form is defined by the formula (cf. \cite[(3.1)]{CS})
$$
{T\!P}(\theta)\,:=\, (r+1)\,\int_0^1 P(\theta.\Omega_t^{r})dt \ ,
$$
where $ \Omega_t \,=\, t\Omega + \frac{1}{2}(t^2-t)[\theta,\theta] $ is the curvature
for the family of connections $\,\theta_t := t \theta\,$, $\,t \in [0,1]\,$, contracting
$ \theta $ to $ 0 $. We refer to $\TP(\theta)$ as the {\it Chern-Simons form} of $P$ and $\theta$.
A classical calculation (see~\cite[Prop. 3.2]{CS}) gives
\bprop \la{csforms}
$\ d\,\TP(\theta)\,=\, P(\Omega^{r+1})$.
\eprop
We remark that the Chern-Simons form can be also defined directly, without integration,
by the following formula (cf. \cite[(3.5)]{CS})
\begin{equation} \la{csdecomp}
\TP(\theta)\,=\, \sum_{i=0}^{r}\, A_i\,\Psi_{i,P} \ ,
\end{equation}
where $A_i\,:=\,\frac{(-1)^i (r+1)! r!}{2^i (r-i)! (r+1+i)!}$ and $\Psi_{i,P}\,:=\, P(\theta[\theta,\theta]^i\Omega^{r-i})$.

An example of a commutative DG algebra with $\g$-valued connection is the Weil algebra 
$ {\mathcal W}(\g)$ of $\g$. Recall ({\it cf.}~\cite[Section~6.9]{Mein}) that $ 
{\mathcal W}(\g)\,:=\,\bSym(\g^{\ast}[1] \oplus \g^*[2]) \cong 
\wedge(\g^*) \otimes \bSym(\g^{\ast}) $, where the generators $\g^{\ast}$ of $\wedge(\g^{\ast})$ are in cohomological degree $1$ and the generators $\g^{\ast}$ of $\bSym(\g^{\ast})$ are in cohomological degree $2$. The differential is given by the identity map on the generators of cohomological degree $1$ and vanishes on the generators of cohomological degree $2$. It is therefore, easy to see that $ {\mathcal W}(\g)$ is acyclic, i.e, quasi-isomorphic to $k$. The identity map $\g^{\ast} \rar \g^{\ast}\,=\,W^1(\g)$ gives a $\g$-valued connection $\theta_{{\mathcal W}}$ on $ {\mathcal W}(\g)$. There is a second isomorphism $ {\mathcal W}(\g)\,\cong\, \wedge(\g^{\ast}) \otimes \bSym(\g^{\ast})$ such that $\theta_{\mathcal W}$ is still the identity from $\g^{\ast}$ to the $\g^{\ast}$ viewed as the space of generators of $\g^{\ast}$ and the curvature $\Omega_{\mathcal W}$ of $\theta_{\mathcal W}$ is the identity map from $\g^{\ast}$ to the space of generators of $\bSym(\g^{\ast})$. Thus, under this second isomorphism, the element $P(\Omega_{\mathcal W}^{r+1})$ is identified with the degree $2r+2$ element $P\,\in\, \bSym(\g^{\ast}) \subset \wedge(\g^{\ast}) \otimes \bSym(\g^{\ast})$ for any $P\,\in\,I^{r+1}(\g)$. It follows from Proposition~\ref{csforms} that
$P\,=\,d\TP(\theta_{\mathcal W})$.

The Weil algebra is the {\it universal} commutative DG algebra with a $\g$-valued connection. Indeed, a connection $\theta$ on a commutative DG algebra $\mathcal A$ can be viewed as a map f vector spaces $\theta\,:\, \g^{\ast} \rar \mathcal A^1$. This defines a (characteristic) homomorphism $c\,:\,{\mathcal W}(\g) \rar \mathcal A$ by $c(\mu)\,=\,\theta(\mu),c(\hat{\mu})=d_{\mathcal A}\theta(\mu)$ for all $\mu \,\in\g^{\ast}$. Here each $\mu\,\in\,\g^{\ast}$ is viewed as a degree $1$ element of $\wedge(\g^{\ast})$ and $\hat{\mu}\,:=\,d_{\mathcal W}\mu$, i.e,  $\mu$ viewed as a degree $2$ element of $\bSym(\g^{\ast})$. Clearly, $\theta\,=\,c(\theta_{\mathcal W}), \Omega\,=\,c(\Omega_{\mathcal W})$, etc. From this, it follows that for any $P\,\in\,I^{r+1}(\g)$,
$$ P(\Omega^{r+1})\,=\,c(P)\,,\,\,\,\, \TP(\theta)\,=\, c(\TP(\theta_{\mathcal W}))\,\text{.} $$
Thus, $\TP(\theta_{\mathcal W})$ is the universal Chern-Simons form.

\subsection{Main theorem}
\la{MT}
Let $C\,:=\,(\Sym^c(W), \delta)$ be a semi-cofree, cocommutative, conilpotent coaugmented DG coalgebra cogenerated by a finite-dimensional graded vector space $W$. 
Assume further that the corestriction of $\delta$ to $W$ vanishes
on $\Sym^{r}(W)$ for $\,r \gg 0\,$.
Motivated by Beilinson's construction (see~Appendix~A), we consider the convolution (DG) 
algebra
$$
\mathcal A\,:=\, \Hom(\dr(C), \CE^c(\g^{\ast}(\bar{C});k))\,\text{.}
$$
Note that taking bigraded linear duals\footnote{Equipping $W$ and $ W^{\ast}$ with 
weight $1$ makes the coalgebras
$\dr(C)$,$\, \CE(\g(\bar{E});k)$ and the algebras
$\CE^c(\g^{\ast}(\bar{C});k)$,$\,\dr(E)$ bigraded. While taking
bigraded duals, we stick to the convention that homological degrees are
inverted while weights are preserved. Thus, $\dr(C)$ is the
bigraded dual of $\dr(E)$, etc. (even though the differentials are
not necessarily weight-preserving).} gives an 
isomorphism of convolution algebras
\begin{equation} 
\la{grdual} 
\mathcal A\,\cong\, \mathcal A_E\,:=\,\Hom(\CE(\g(\bar{E});k), \dr(E))\,,
\end{equation}
where $E\,:=\,(\Sym(W^{\ast}), \delta^{\ast})$. Equip $\mathcal A$ with a {\it cohomological} grading by inverting all homological degrees.

The DG algebra $\mathcal A$ has a natural $\g$-valued connection $\theta\,\in\,\mathcal A^1 \otimes \g$ given by\footnote{The isomorphism~\eqref{grdual} transforms $\theta$ into the connection~\eqref{conn}}
\begin{equation} \la{conncoalg} \theta(c)\,=\, \sum_{\alpha} \xi_{\alpha}^{\ast} (s^{-1}\bar{c}) \otimes \xi_{\alpha} \,,\end{equation}
where $\{\xi_{\alpha}\}$ is a basis of $\g$ and $ \{\xi_{\alpha}^{\ast}\} $ is the dual basis of $\g^{\ast}$ and $\xi_{\alpha}^{\ast}(\bar{c})\,:=\,\xi_{\alpha}^{\ast} \otimes \bar{c}$ ($\bar{c}$ being the image of $c$ in $\bar{C}$).
 Similarly, the curvature $\Omega\,\in\,\mathcal A^2 \otimes \g$ of the connection $\theta$ vanishes on $\Omega^j_C$ for all $j \neq 1$ and satisfies
\begin{equation} \la{curvcoalg} \Omega(\omega) = \sum_{\alpha} \xi^{\ast}_{\alpha}(s^{-1}{d\omega}) \otimes \xi_{\alpha}\,,   \end{equation}
for $\omega \,\in\,\Omega^1_C$. Further, $\mathcal A$ is a commutative DG algebra, making $\mathcal A \otimes \g$ a DG Lie algebra. Hence, $[\theta, \theta]\,\in\,\mathcal A^2 \otimes \g$. Thus, for any  $P\,\in\,I^{r+1}(\g)$, the Chern-Simons form $\TP(\theta)$ associated with $\theta$ arises as an element of $\mathcal A^{2r+1}$.

Recall that the map $s^{2r}$ increasing homological degree by $2r$ gives a map of graded vector spaces from $\mathrm{CC}^{-,(r)}[\drm(C)]$ to $\dr(C)$. Dually, $s^{2r}$ gives a map of graded vector spaces from $\dr(E)$ to $\mathrm{CC}^{(r)}[\drm(E)]$. Let $\theta_E$ denote the image of the connection $\theta$ of $\mathcal A$ under~\eqref{grdual}. It is known (see Theorem~\ref{csliecyclic}) that the (shifted) Chern-Simons form $s^{2r}\TP(\theta_E)$ gives a map of complexes
$$ s^{2r}\TP(\theta_E)\,:\,\CE(\g(\bar{E});k) \rar \mathrm{CC}^{(r)}[\drm(E)][1] \,\text{.}$$
Taking (bigraded) linear duals, we see that the (shifted) Chern-Simons form $\TP(\theta)s^{2r}$ gives a map of complexes
\begin{equation} 
\la{drtr} 
\TP(\theta)s^{2r}\,:\,  \mathrm{CC}^{-,(r)}[\drm(C)][-1] \rar \CE^c(\g^{\ast}(\bar{C});k) \,\text{.}\end{equation}

 By Theorem~\ref{t3}, the inclusion
 $$ \inc\,:\, \Ker(d\,:\,\Omega^r_C \rar \Omega^{r-1}_C)[-r] \hookrightarrow \mathrm{CC}^{-,(r)}[\drm(C)] $$
 is a quasi-isomorphism. Let $\varepsilon\,:\,\overline{\mathrm{C}}^{\lambda,(r)}(C) \rar \Ker(d\,:\,\Omega^r_C \rar \Omega^{r-1}_C)[-r]$ be as in~\eqref{epscyccoalg}. By Theorem~\ref{t6.5bfprw},
\begin{equation} \la{et6.5bfprw} \DRep_{\g}(\mathfrak{a}) \,\cong\,\CE^c(\g^{\ast}(\bar{C});k)\,,\end{equation}
where $\mathfrak{a}$ is the DG Lie algebra Koszul dual to $C$. On the other hand, recall from~\cite[Prop. 7.4]{BFPRW} that
\begin{equation} \la{kosdual1} \rHC_{\bullet+1}(C)\,\cong\, \rHC_{\bullet}(\mathcal U \mathfrak{a}) \,\text{.}\end{equation}
~\cite[Prop. 7.4]{BFPRW} further implies that the isomorphism~\eqref{kosdual1} respects the Hodge decomposition~\eqref{liehodgedecomp}  to give an isomorphism
\begin{equation} \la{kosdualh} \rHC_{\bullet+1}^{(r)}(C) \,\cong\, 
\HC^{(r+1)}_{\bullet}(\mathfrak{a})\,\end{equation}
With these identifications, $\TP(\theta)s^{2r}$ can be interpeted as a map from $\HC_{\bullet}^{(r+1)}(\mfa)$ to $\H_{\bullet}(\mfa,\g)$.
\bthm \la{Drintr}
For any invariant polynomial $P\,\in\,I^{r+1}(\g)$, the following diagram commutes:
$$\begin{diagram}
\HC^{(r+1)}_{\bullet}(\mathfrak{a}) & \rTo_{\cong}^{\eqref{kosdualh}} & \rHC_{\bullet+1}^{(r)}(C) & \rTo^{\inc \circ \varepsilon}_{\cong} & \H_{\bullet+1}(\mathrm{CC}^{-,(r)}[\drm(C)])\\
 & \rdTo_{\Tr_{\g}(\mathfrak{a})}    &  & & \dTo_{\frac{1}{(r+1)!}\TP(\theta)s^{2r}}\\
  & & \H_{\bullet}(\mathfrak{a},\g) & \rTo^{\mathrm{Thm.}~\ref{t6.5bfprw}}_{\cong} & \H_{\bullet}(\g^{\ast}(\bar{C});k)
\end{diagram}$$
Thus, the map $ \Tr_{\g}(\mfa) $ is given by the formula
\begin{equation*} \la{csshifttr}
\Tr_{\g}(\mfa)\,=\, \frac{1}{(r+1)!}\TP(\theta)s^{2r}\,,
\end{equation*}
where $\theta $ is defined in~\eqref{conncoalg}.
\ethm
\bproof
For brevity, we denote the maps induced on homologies by a given map of complexes by the same symbol as that of the original map of complexes. Let $\mathcal L\,:=\, \cb_{\mathtt{Comm}}(C)$. Note that $\bar{C}[-1] \hookrightarrow \mathcal L$ as graded $k$-vector spaces. Recall from Section~\ref{secdrintrace} (see
~\cite[Sec. 7]{BFPRW} for more details) that the Drinfeld trace was constructed as a map of complexes
$$ \Tr_{\g}(\mathcal{L}) \,:\, \lambda^{(r+1)}(\mathcal L) \rar \C^c(\g^{\ast}(\bar{C});k) \,\text{.} $$
Dually, one obtains a map of complexes
\begin{equation} 
\la{t3.2e1}
 \C(\g(\bar{E});k) \rar \Sym^{r+1}(\mathcal L^c(\bar{E})) \cap \bB(E)^{\natural}\ ,
\text{.}
\end{equation}
where $ \bB(E)^{\natural} $ denotes the cocommutator subspace of the coalgebra $ \bB(E) $.
The isomorphism
\begin{equation} \la{ninverse} \mathrm{C}^{\lambda,(r)}(C)[-1] \,\cong\, \lambda^{(r+1)}(\mathcal L) \end{equation}
inducing~\eqref{kosdualh} on homologies is obtained by taking $k$-linear duals on the isomorphism
$$ \Sym^{r+1}(\mathcal L^c(\bar{E})) \cap \bB(E)^{\natural} \,\cong\,  \mathrm{C}^{\lambda,(r)}(E)[1] \,,$$
whose inverse is explicitly given by the map $N$ which acts on $E[1]^{\otimes n}$ by $1 +\tau+\ldots +\tau^{n-1}$ where $\tau$ is the $n$-cycle $(0,1,\ldots,n-1)$. Let $\varphi_P$ denote the composite map
\[
\begin{diagram} \C(\g(\bar{E});k)  & \rTo^{\eqref{t3.2e1}}& \Sym^{r+1}(\mathcal L^c(\bar{E})) \cap \bB(E)^{\natural} & \rTo^{\cong} & \mathrm{C}^{\lambda,(r)}(E)[1] \end{diagram}\,\text{.}
\]
The composite map
$$ \begin{diagram} \mathrm{C}^{\lambda,(r)}(C)[-1] & \rTo^{~\eqref{ninverse}} & \lambda^{(r+1)}(\mathcal L) & \rTo^{\Tr_{\g}(\mathcal{L})} & \C^c(\g^{\ast}(\bar{C});k) \end{diagram}$$
is thus equal to the map obtained by applying $\varphi_P$ to $E$ and taking (bigraded) linear duals.

It is known (see Theorem~\ref{csandabscomp}) that the diagram
\[
\begin{diagram}
 \CE(\g(\bar{E});k) & & \\
   \dTo^{\frac{1}{(r+1)!}s^{2i}\TP(\theta_E)} &  \rdTo^{\varphi_P}& \\
 \mathrm{CC}^{(r)}[\drm(E)][1] & \rTo^{\varepsilon \circ \mathrm{p}} & \overline{\mathrm{C}}^{\lambda,(r)}(E)[1]
 \end{diagram}
\]
commutes on homologies, where $\mathrm{p}$ is as in~\eqref{pcyciso} and $\varepsilon$ is as in~\eqref{epscyclic}.
The desired result follows immediately from the above fact by taking bigraded linear duals.
\eproof
It is easy to verify that (for the dual statement, see Proposition~\ref{pderhamcs})
\bprop \la{pcoderhamcs}
$\ \TP(\theta)s^{2r} \circ \inc\,=\, P(\theta.\Omega^r)s^{2r}$.
\eprop
As a consequence of Theorem~\ref{Drintr} and Proposition~\ref{pcoderhamcs}, we have
\bcor 
\la{drtrform}
For any $ P \in I^{r+1}(\g) $, the Drinfeld trace map
$ \Tr_{\g}(\mathfrak{a}):\,\HC^{(r+1)}_{\bullet}(\mathfrak{a})
\to \H_{\bullet}(\mathfrak{a},\g)^{\ad \g} $ is given by
\begin{equation} 
\la{edrtr}
\Tr_{\g}(\mathfrak{a})\,=\,\frac{1}{(r+1)!}P(\theta.\Omega^r) s^{2r} \ ,
\end{equation}
where $\theta $ is defined in~\eqref{conncoalg}.
\ecor

We now specialize to the case $ \g = \gl_V $  and use formula \eqref{edrtr} 
to express the derived character maps for associative algebras.

\subsection{The case of $ \gl_V $}
\la{unialg}
Recall that, for any $k$-algebra $A$ and finite-dimensional vector space $V$,
there is a derived scheme $ \DRep_V(A) $, analogous to $ \DRep_{\g}(\mfa)$,  
that parametrizes the representations of $A$ in $V$ as an associative algebra 
(see \cite{BKR}). There exist also natural maps
\begin{equation}
\la{trace}
\Tr_V(A) \,:\, \rHC_{\bullet}(A) \rar \H_{\bullet}(A,V) 
\end{equation}
relating the (reduced) cyclic homology of $A$ to its representation homology
$ \H_{\bullet}(A,V) := \H_\bullet[\DRep_V(A)]$. As shown in \cite{BFPRW}, the representation homology $ \H_\bullet(A,V) $ and the character maps \eqref{trace} can be expressed in Koszul dual terms of Lie coalgebras. To be precise, by \cite[Theorem~3.1]{BFPRW}, there is an isomorphism of algebras
\begin{equation}\la{asslie}
\H_{\bullet}(A,V)\,\cong\, \H_{\bullet}(\gl_V^{\ast}(\bar{C}); k)\ ,
\end{equation}
where $\gl_V^{\ast}(C) := \End(V)^{\ast} \otimes C\,$ is a DG Lie coalgebra defined over 
a (co-associative) coalgebra $ C \in \DGC_{k/k} $ Koszul dual to the algebra $A$. On the other hand, since the cobar construction $ R\,:=\,\cb(C) $
provides a cofibrant resolution of $A$ in $ \DGA_{k/k}$, by a theorem of Feigin-Tsygan,
we can identify
$ \rHC(A) \cong \H_{\bullet}[R_{\n}] $, where $\,R_{\n} :=
\bar{R}/[\bar{R},\bar{R}]\,$. With these identifications, the trace map
$ \Tr_V(A) $ becomes
$$
\Tr_V(A) \,:\, \H_{\bullet}[R_{\n}] \to \H_{\bullet}(\gl_V^{\ast}(\bar{C}); k)\ .
$$

Now, let $A\,=\,\mathcal U\mathfrak{a}$ be the universal enveloping algebra of a Lie algebra
$\mfa$.  In this case, the Koszul dual coalgebra $ C $ can be chosen to be cocommutative, with
$ \mathcal{L} := \cb_{\tt Comm}(C) $, giving a cofibrant resolution of the Lie algebra $ \mathfrak{a} $
in $ \DGL_k $. As a result, \eqref{asslie} becomes an isomorphism  ({\it cf.} \cite[Prop. 6.3]{BFPRW}):
$$ 
\H_{\bullet}(A, V) \cong \H_{\bullet}(\mfa,\gl_V) \ .
$$
Identifying $ \gl_V = \End(V) $, we define polynomials $ \Tr \,\in\,
I^{q+1}(\gl_V)$ by $ \Tr(X,\ldots, X) \,:=\,\Tr_V(X^{q+1})\,$,
where $ \Tr_V:\,\End(V) \to k $ is the usual matrix trace on $V$.
With this choice of invariant polynomials, the direct sum of the Drinfeld traces
gives a map  $\,\oplus_{q \ge 1}\,\HC_{\bullet}^{(q)}(\mathfrak{a}) \to
\H_{\bullet}(\mfa,\gl_V) $, which upon isomorphism \eqref{liehodgedecomp}, coincides
with the trace map \eqref{trace}. By Corollary~\ref{drtrform}, we now conclude
\begin{equation}
\la{traceunivenvalg}
\Tr_V(A)\,= \,
\sum_{q=0}^{\infty} \frac{1}{(q+1)!} \Tr(\theta.\Omega^q)\,s^{2q} \,\text{.}
\end{equation}

\subsection{Reduced trace maps}
\la{DHC}
Unfortunately, despite its simple appearance, the trace formula \eqref{edrtr} is still 
inaccessible to computations.  The problem is that we need to know
an explicit presentation for representation homology which is available
only in a few nontrivial cases. One way to get around this problem is to restrict the Drinfeld 
traces  to `diagonal representations' using
with an appropriate (derived) version of the classical Harish-Chandra homomorphism. 
The derived Harish-Chandra homomorphism plays a crucial role in \cite{BFPRW}
and we briefly recall its construction here. 

We keep the assumption that $ \g $ is a finite-dimensional 
reductive Lie algebra. Let $ \h $ be a Cartan subalgebra of $ \g $, 
and let ${\mathbb W} $ be the corresponding Weyl group. For any Lie algebra $ \mfa $, 
the representation scheme $ \Rep_{\h}(\mfa) $ 
is naturally a (closed) subscheme of $ \Rep_{\g}(\mfa) $. In fact, 
the inclusion $ \h \into \g $ induces a morphism of schemes 
$\, \Rep_{\h}(\mfa)/{\mathbb W} \to \Rep_{\g}(\mfa)/\!/G \,$,
where $\,\Rep_{\h}(\mfa)/{\mathbb W} \,$ is the quotient of $ \Rep_{\h}(\mfa) $ 
relative to the natural action of ${\mathbb W} $ on $ \h $. This yields a
morphism of functors
$\,(\,\mbox{--}\,)^{\ad \g}_{\g} \to  (\,\mbox{--}\,)^{\mathbb W}_{\h}\,$ 
which extends to a morphism of the 
derived functors: $\,\L (\,\mbox{--}\,)^{\ad \g}_{\g} \to  
\L(\,\mbox{--}\,)^{\mathbb W}_{\h}\,$. As a result, for any DG Lie algebra $ \mfa $,
we get a canonical map of commutative DG algebras
\begin{equation}
\la{hch}
\Phi_{\g}(\mfa): \, \DRep_{\g}(\mfa)^{\ad \g}\, \to\, \DRep_{\h}(\mfa)^{\mathbb W} \ .
\end{equation}
which we called the {\it derived Harish-Chandra homomorphism} ({\it cf.} \cite[Section~7]{BFPRW}).
Explicitly, using Theorem~\ref{t6.5bfprw}, we can realize $\,\Phi_{\g}(\mfa)\,$
as a morphism of complexes $\,\C^c(\g^*(C),\,\g^*;\, k) \onto 
\C^c(\h^*(C),\,\h^*;\, k)^{\mathbb W} \,$ corresponding to the projection $\, \g^* \onto \h^* $.
At the homology level, this induces a map 
$\, \H_{\bullet}(\mfa, \g)^{\ad \g} \to  \H_{\bullet}(\mfa, \h)^{\mathbb W} \,$.

Now, let $\, I(\h)^{\mathbb W} := \Sym(\h^*)^{\mathbb W} \,$ denote the space
of $ {\mathbb W}$-invariant polynomials on $ \h $. Recall that 
$ \g^* \onto \h^* $ extends to an isomorphism of algebras 
$\, I(\g) \stackrel{\sim}{\to} I(\h)^{\mathbb W} $ which is called the Chevalley 
isomorphism for $\g$. 
\blemma
\la{dHCm}
For every integer $ r \ge 1 $, the following diagram commutes
$$
\begin{diagram}
 I^{r}(\g) \otimes \HC_{\bullet}^{(r)}(\mathfrak{a}) 
& \rTo^{\,\cong\,} & I^{r}(\h)^{\mathbb W} \otimes \HC_{\bullet}^{(r)}(\mathfrak{a})\\
\dTo^{\Tr_{\g}(\mathfrak{a})}& & \dTo_{\Tr_{\h}(\mathfrak{a})}\\
\H_{\bullet}(\mfa, \g)^{\ad \g} & \rTo^{\Phi_{\g}(\mfa)} & \H_{\bullet}(\mfa, \h)^{\mathbb W}
\end{diagram}
$$
\elemma
\bproof
Let $P\,\in\,I^r(\g)$ and let $P_{\mathbb W}\,\in\,I(\h)^{\mathbb W}$ denote the image of $P$ under the Chevalley isomorphism. Similarly, let $\theta_{\h}$ denote the connection~\eqref{conncoalg} on the convolution algebra $\mathcal A_{\h}\,:=\,\Hom(\dr(C), \CE^c(\h^{\ast}(\bar{C});k))$ and let $\Omega_{\h}$ denote the curvature of $\theta_{\h}$. The inclusion $\h \hookrightarrow \g$ induces an inclusion $\mathcal A_{\h} \otimes \Sym(\h) \hookrightarrow \mathcal A_{\h} \otimes \Sym(\g)$ of commutative DG algebras.  Clearly, for any element $\alpha\,\in\, \mathcal A_{\h} \otimes \Sym(\h)$, $P_{\mathbb W}(\alpha)\,=\,P(\alpha)$, where $\alpha$ on the right hand side is viewed as an element of $A_{\h} \otimes \Sym(\g)$. By~\eqref{edrtr}, 
$$ 
\Tr_{\h}(\mathfrak{a})\,=\,\frac{1}{r!} \, P_{\mathbb W}(\theta_{\h}.\Omega^{r-1}_{\h})\,,\quad \quad\, \Tr_{\g}(\mathfrak{a})\,=\, \frac{1}{r!} \, P(\theta.\Omega^{r-1})\,\text{.}
$$
It therefore suffices to verify that $\Phi_{\g} (\theta.\Omega^{r-1})\,=\,\theta_{\h}.\Omega_{\h}^{r-1}$ as elements of $A_{\h} \otimes \Sym(\g)$. This follows from the fact that $\Phi_{\g}$ is a DG algebra homomorphism and $\Phi_{\g}(\theta)\,=\,\theta_{\h}$, which is easy to verify by direct
calculation. 
\eproof

Lemma~\ref{dHCm} shows that, modulo the Chevalley isomorphism, the composite map
\begin{equation} 
\la{rDTr}
\HC_{\bullet}^{(r)}(\mathfrak{a}) 
\xrightarrow{\Tr_{\g}(\mfa)} \H_{\bullet}(\mathfrak{a},\g)^{\ad\,\g} 
\xrightarrow{\Phi_{\g}(\mfa)}  \H_{\bullet}(\mfa, \h)^{\mathbb W} 
\end{equation}
equals $\, \Tr_{\h}(\mfa) \,$, which depends only on $ \h \,$, $ {\mathbb W} $ and the choice of a 
invariant polynomial $ P \in I(\h)^{\mathbb W} $ but not on the Lie algebra $\g$. 
We call $ \Tr_{\h}(\mfa) $ the {\it reduced trace map}. This map is more accessible 
than the Drinfeld trace, since $ \H_{\bullet}(\mfa, \h)^{\mathbb W}$ is easy to 
compute in many cases. 

In fact, the computation of $ \Tr_{\h}(\mfa) $ reduces to the rank one case.
To be precise, let $ \h = k $ be a one-dimensional Lie algebra with a 
preferred basis. Denote by $\TTr(\mathfrak{a})$ the Drinfeld trace map for $ \mfa $ 
corresponding to the canonical element in $ \Sym(\h^*) $ of degree $r$. Then, 
for an arbitrary $ \h $, the map  $\Tr_{\h}(\mathfrak{a})$ factors through $\TTr(\mathfrak{a})$. 
To see this, choose a Koszul dual coalgebra $C\,\in\,\cDGC_{k/k}$ for $\mfa$, and let $ R\,:=\,\cb(C) $. Then, for a given
$ \h $, choose a linear basis $\{\xi_{\alpha}\} \subset \h $ and define a DG algebra 
homomorphism 
$$
\vartheta_{\h}:\, R_{\ab} \rar \CE^c(\h^{\ast}(\bar{C});k) \otimes \Sym(\h)
$$  
by sending the canonical generators $ s^{-1}c $ of $ R_{\ab} $ to the elements
$$ 
\vartheta_{\h}(s^{-1}c)\,=\, \sum_{\alpha} \xi_{\alpha}^{\ast}(s^{-1}c) \otimes \xi_{\alpha}\ ,
$$
where $ \{\xi_{\alpha}^*\} \subset \h^* $ is the dual basis to $\{\xi_{\alpha}\}$.
Note that the map $ \vartheta_{\h} $ thus defined is independent on the choice of basis $\{\xi_{\alpha}\}$.
Now, for any $P\,\in\,I^r(\h)^{\mathbb W}$, the evaluation at $P$ on the second factor gives a 
map $ \CE^c(\h^{\ast}(\bar{C});k) \otimes \Sym(\h) \rar \CE^c(\h^{\ast}(\bar{C});k)^{\mathbb W}$. 
Write $ P(\vartheta_{\h}) $ for the composition of this map with $\theta_{\h} $:
$$ 
P(\vartheta_{\h})\,:\,R_{\ab} \rar \CE^c(\h^{\ast}(\bar{C});k)^{\mathbb W}\ .
$$
On homology, this induces a map $ \H_{\bullet}(\mfa,k) \rar \H_{\bullet}(\mfa,\h)^W $.  
\blemma 
\la{thruTr1} 
For any $P\,\in\,I(\h)^{\mathbb W}$, the trace map \eqref{rDTr} factors as
$$
\Tr_{\h}(\mfa)\,=\, P(\vartheta_{\h}) \circ \TTr(\mfa)\ .
$$
\elemma
\bproof
Let $\theta_0$ denote $\theta_{\h}$ for $\h:=k$ and let $\Omega_0$ denote the curvature of $\theta_0$. By~\eqref{edrtr}, 
$$ \TTr(\mathfrak{a})\,=\,\frac{1}{r!} [\theta_0.\Omega_0^{r-1}]\,,\quad \quad \,\Tr_{\h}(\mfa)\,=\,\frac{1}{r!}P(\theta_{\h}.\Omega^{r-1}_{\h})\,\text{.}$$
The desired result now follows from the observation that $\theta_{\h}\,=\,\vartheta_{\h}(\theta_0)$. 
\eproof
\begin{example}
Let $\h\,:=\, \h_n $ be the subalgebra of diagonal matrices in $\gl_n$ and let $\mathbb W=\mathbb S_n$ be the symmetric group acting on $\h_n$ by permuting the diagonal entries. Let 
$ P_q\,\in\, I^{q+1}(\h_n)^{{\mathbb S}_n}$ be the symmetric polynomial given by the 
$ (q+1)$-th power sum. Then
$\CE(\h^{\ast}(C);k)^{\mathbb W}\,=\, [R_{\ab}^{\otimes n}]^{\mathbb S_n}$, where $\mathbb S_n$ acts on $R_{\ab}^{\otimes n}$ by permuting the factors. Writing 
$\, \mathbb S^n[R_{\ab}]\,:=\,[R_{\ab}^{\otimes n}]^{\mathbb S_n} $, we see that
$ \sum_{q=0}^{\infty} P_q(\vartheta_{\h})\,:\, R_{\ab} \rar \mathbb S^n[R_{\ab}]$ is
precisely the symmetrization map 
$$ \Sym\,:\, R_{\ab} \rar  \mathbb S^n[R_{\ab}]\,,\quad \quad\, 
r \mapsto \sum_{i=1}^n (1,\ldots, r,\ldots,1) \,,$$
where $r$  in the $i$-th factor is in the $i$-th summand. Now, let $A:=\mathcal U\mfa$. With the above choice of invariant polynomials, the direct sum of the reduced Drinfeld traces $ \Tr_{\h_n}(\mfa)$ gives a map $\oplus_{q \geq 1} \HC_{\bullet}^{(q)}(\mfa) \rar \mathbb S^n[R_{\ab}]$, which upon isomorphism~\eqref{liehodgedecomp}, coincides with the trace map 
$$ \TTr_n(A)\,:\, \rHC_{\bullet}(A) \rar \mathbb S^n[\H_{\bullet}(R_{\ab})] $$
constructed in~\cite[Section 4]{BFPRW}. We therefore, obtain a special case (for universal enveloping algebras) of~\cite[Prop. 4.2]{BFPRW} as a consequence of Lemma~\ref{thruTr1}. 
We remind the reader that, for $n=1$, the reduced trace map 
\begin{equation} \la{redtraceassocalg} \TTr(A)\,:\,\rHC_{\bullet}(A) \rar \H_{\bullet}(R_{\ab}) \end{equation}
coincides with the trace $\Tr_V$ in~\eqref{trace} for $V=k$.
\end{example}

Thus, thanks to Lemma~\ref{thruTr1}, computing the trace map $  \Tr_{\h}(\mfa) $
for any $ \h $ and any invariant polynomial $ P \in I(\h)^{\mathbb W} $
reduces to computing the map $ \TTr(\mfa) = \Tr_{\h}(\mfa) $ for $ \h $ being 
one-dimensional. In the next section, we will given an explicit formula for 
$\TTr(\mfa)$ for an arbitrary abelian Lie algebra $ \mfa $ in terms of
differential forms on $ \Sym(\mfa) $. 

\section{Traces of symmetric algebras}
\la{TSA}
\subsection{Symmetric algebras}
\la{minres}
In this section, $ A := \Sym(W) $ will denote the symmetric
algebra of a vector space $ W $ of finite dimension $ N $. We may think of
$A$ as the universal enveloping algebra of the Lie algebra $ \mfa = W $
with trivial bracket, so that the results of Section~\ref{unialg} will apply.
We will write $ \TTr(\mfa) $ for $ \mfa = W $ as $ \TTr(A) $ or
simply as $ \TTr $ when there is no danger of confusion.

Recall that $A$ has a minimal cofibrant resolution $ R = \cb(C) $ given by the 
cobar construction of the Koszul dual coalgebra $ C = \Sym^c(W[1])$.
The algebra $ R $ is the tensor algebra generated by the vector space
$ \Sym(W[1])[-1] $, whose elements of degree $ k-1 $ we denote by
\begin{equation}
\la{lambda}
\lambda(v_1, v_2, \ldots, v_k) :=  s^{-1}(dv_1 \ldots dv_k)
\,\in\, \Sym^k(W[1])[-1] \ .
\end{equation}

Here, $dv$ denotes $v$ viewed as an element of $W[1]$. With this notation, the differential on $ R $ satisfies
\begin{eqnarray}
&& d \lambda(v_1, v_2) = - [v_1, \,v_2] \ , \la{d1} \\
&& d \lambda(v_1, v_2, v_3) = - [v_1, \lambda(v_2, v_3)] -  [v_2, \lambda(v_3, v_1)] -
[v_3, \lambda(v_1, v_2)]\ . \la{d2}
\end{eqnarray}
In general, one can verify without much difficulty the following formula.
\blemma
\la{diff_res}
The differential $\delta$ on the minimal resolution $R$ of $A=\Sym(W)$ is defined by
\begin{equation*}
\delta\lambda(v_1,\ldots,v_n)=\sum\limits_{\substack{p+q=n\\1\leq p\leq q}}(-1)^p\sum\limits_{\sigma\in\Sh(p,q)}
(-1)^{\sigma}\left[\lambda(v_{\sigma(1)},\ldots,v_{\sigma(p)}),\,\lambda(v_{\sigma(p+1)},\ldots,v_{\sigma(p+q)})\right]\ ,
\end{equation*}
where $\Sh(p,q)$ denotes the set of $(p,q)$-shuffles. Hence, $\delta R \,\subset\,[R\,,\,R]$.
\elemma

Let $ R_{\ab} $ denote the abelianization of $ R $. By Lemma~\ref{diff_res}, $\, R_{\ab} $
is the graded symmetric algebra of $ \Sym(W[1])[-1]$ equipped with zero differential.
Explicitly (omitting the shifts), we can write
\begin{equation}
\la{resab}
R_{\ab} =  \Sym(W) \otimes \bSym(\wedge^2 W \oplus \wedge^3 W \oplus \ldots \oplus \wedge^N W)
\end{equation}
with understanding that the elements of $\wedge^{k}W$ have (homological) degree $ k-1 $.

Note that the de Rham algebra of $A$ can be identified as
$\, \Omega^\bullet_A = \bSym(W \oplus sW) = \Sym(W) \otimes \Lambda(W)\,$,
and for each $ k \ge 1 $, there is a canonical (injective) map
\begin{equation}
\la{drmin}
s^{-1}:\, \Omega^k_A \to R_{\ab}\ ,\quad
a \, dv_1  \ldots dv_k \mapsto a \, \lambda(v_1,\ldots, v_k)\ .
\end{equation}
This map shifts homological degree by $ -1 \,$, whence its notation.

Next, we recall that there is a canonical isomorphism 
({\it cf.} Theorem~\ref{t2})
\begin{equation}
\la{rhco}
\rHC_{\bullet}(A)\,\cong\, \Omega^{\bullet}_A/d\Omega^{\bullet-1}_A\ .
\end{equation}
On the other hand, regarding $A = \Sym(W) $ as the universal enveloping algebra
of the abelian Lie algebra $ \mfa = W $, we have the (dual) 
Hodge decomposition \eqref{liehodgedecomp} of $\rHC_{\bullet}(A)$.
Under the isomorphism \eqref{rhco}, the image of the direct summand $ \HC^{(
r)}_{\bullet}(\mfa) $ in \eqref{liehodgedecomp} is precisely 
$ \Sym^r(W) \otimes \wedge^{\bullet}(W)/d[\Sym^{r+1}(W) \otimes \wedge^{\bullet-1}(W)]$. 
Thus, for the abelian Lie algebra $ \mfa = W$, we have an isomorphism
\begin{equation}
\la{hodgesym}
\HC^{(r)}_{\bullet}(\mfa)\,\cong\, \Sym^r(W) \otimes
\wedge^{\bullet}(W)/d[\Sym^{r+1}(W) \otimes \wedge^{\bullet-1}(W)] \,\text{.}
\end{equation}

Upon the isomorphism~\eqref{liehodgedecomp}, the direct sum of the reduced Drinfeld traces $\TTr(\mfa)\,:\,\HC^{(r)}(\mfa) \rar R_{\ab}$ becomes the reduced trace $\TTr(A)\,:\,\rHC_{\bullet}(A) \rar R_{\ab}$ in~\eqref{redtraceassocalg}. 
Let $\varepsilon$ be as in~\eqref{epscyclic} and let $T$ be the quasi-isomorphism in~\eqref{bkr422}. By Theorem~\ref{conj1}, the isomorphisms $T \circ \varepsilon\,,\, 
\varepsilon^{-1} d\,:\,\Omega^{\bullet}_{\bar{A}}/d\Omega^{\bullet-1}_{\bar{A}} \rar \H_{\bullet}(R_{\n})$ coincide. It follows that $\TTr(A)$ is identified with the composite map
$$\begin{diagram} \Omega^{\bullet}_{\bar{A}}/d\Omega^{\bullet-1}_{\bar{A}} & \rTo^{\varepsilon^{-1} d} &   \H_{\bullet}(R_{\n}) & \rTo^{\TTr(A)} & R_{\ab} \end{diagram}\,, $$

Let $\omega \,\in\,\Omega^p_A$ be a form whose polynomial coefficients are homogeneous of degree $q+1$. The homological degree of $d\omega$  in $\drm(C)$ is $p+1$. On the other hand, $d\omega$ can also be viewed as an element of $\dr(C)$, where its homological degree is $2q+p+1$.  We may therefore, suppress $s^{2q}$ from the notation when we apply~\eqref{traceunivenvalg} to $[d\omega]\,\in\,\H_{p+1}[\drm(C)]$ by reinterpreting $d\omega$ as an element of $\dr(C)$. With these conventions,~\eqref{traceunivenvalg} immediately implies:

\bthm \la{usualtrace}
The reduced trace map $\TTr(A)\,:\,\Omega^{\bullet}_{\bar{A}}/d\Omega^{\bullet-1}_{\bar{A}} \rar R_{\ab}$ is given by
\begin{equation} \la{trace2} \TTr(A)(\omega)\,=\, \sum_{q=0}^{\infty}\,                                                                                        \frac{1}{(q+1)!}\, [\theta.\Omega^q] (d\omega) \,\text{.}\end{equation}
\ethm

We now illustrate Theorem~\ref{usualtrace} for some concrete examples.

\subsection{Traces in low homological degrees} \la{trdiffoplowdeg}

Let $x_1, \ldots, x_N$ be a basis for $W$ and let $dx_1,\ldots,dx_N$ denote the corresponding basis elements in $W[1]$ (i.e, $dx_i\,:=\,sx_i$). In this case, the de Rham coalgebra of $C$ is $\Sym^c(W[1]) \otimes \Sym^c(W[2])$ equipped with the de Rham differential of $A$, with the difference between $\dr(C)$ and $\dr(A)$ being the interpretation of the generators of $A$ as degree $2$ cogenerators of  $\dr(C)$ rather than degree $0$ generators of $A$. Let $f(x_1,\ldots,x_N)$ be a homogenous polynomial of degree $r$ in $x_1,\ldots,x_N$. For notational brevity, the element $f(x_1,\ldots,x_N)dx_{i_1}\ldots dx_{i_p}$ of $\dr(A)$ (which is of cohomological degree $p$ and is in $\Omega^p_A$) shall continue to be denoted by $f(x_1,\ldots,x_N)dx_{i_1}\ldots dx_{i_p}$ when viewed as an element of $\Omega^r_C \subset \dr(C)$ (where its homological degree is $2r+p$). Let $R\,:=\,\cb(C)$. Recall that we denote the element $s^{-1}(dv_1 \ldots dv_p) \,\in\, R_{\ab}$ by $\lambda(v_1,\ldots,v_p)$ for $v_1,\ldots,v_p\,\in\,W$. In what follows, let $\g:=\gl_1\,=\,k$. Choose the element $1$ as the basis as well as the dual basis of $\g$. With these choices,~\eqref{conncoalg} becomes

\begin{equation} \la{conncoalg1}  \theta(f(x_1,\ldots,x_N) dx_{i_1} \ldots dx_{i_p})\,=\, f(0,\ldots,0)\lambda(x_{i_1},\ldots, x_{i_p}) \,\text{.} \end{equation}
Similarly,~\eqref{curvcoalg} becomes
\begin{equation} \la{curvcoalg1} \Omega(f(x_1,\ldots, x_N)dx_{i_1} \ldots dx_{i_p})\,=\, \twopartdef{\lambda(f,x_{i_1}, \ldots ,x_{i_p})}{ f \in W}{0}{f \notin W} \end{equation}

\subsubsection{Homological degree $0$} \la{trdeg0}

 Let $f(x_1,\ldots,x_N)$ be a homogenous polynomial of degree $r+1$ in $A$.
Note that the summands of $df$ are of the form $u_1\ldots u_r du_{r+1}$, where $u_1,\ldots,u_{r+1}\,\in\,W$. By~\eqref{trace2}, we need to evaluate $[\theta.\Omega^q](u_1\ldots u_r du_{r+1})$. Note that
$$\Delta^{q+1}(u_1\ldots u_r du_{r+1})\,=\, \sum \pm u_{S_1}du_{T_1} \otimes \ldots \otimes u_{S_{q+1}}du_{T_{q+1}} \,,$$
where the summation above runs over $S_1 \sqcup \ldots \sqcup S_{q+1}\,=\,\{1,\ldots,r\}$ and $T_1\sqcup \ldots \sqcup T_{q+1}=\{r+1\}$. Hence,
\begin{equation} \la{trace02} [\theta.\Omega^q](u_1\ldots u_r du_{r+1})\,=\, \sum \pm \theta(u_{S_1}du_{T_1}) \Omega(u_{S_2}du_{T_2})\ldots \Omega(u_{S_{q+1}}du_{T_{q+1}}) \,\text{.}\end{equation}
It follows from~\eqref{conncoalg1} and~\eqref{curvcoalg1} that the only summands contributing to the R.H.S of~\eqref{trace02} are those for which $S_1 \,=\,\emptyset$, $|S_2|=\ldots=|S_{q+1}|=1$ and $T_1 \neq \emptyset$. Hence, the R.H.S of~\eqref{trace02} is nonzero only when $q=r$, in which case it equals
$$ r! u_1 \ldots u_{r+1}\,=\, r!\iota_{\epsilon}(u_1\ldots u_r du_{r+1})\,,$$
where $\iota_{\epsilon}$ denotes contraction with the Euler vector field $\epsilon\,:=\, \sum_{i=1}^n x_i\frac{\partial}{\partial x_i}$. Therefore, for $f$ homogenous in $A$ of degree $r+1$, $r \geq 0$,
\begin{equation} \la{trace03} \TTr(A)(f) \,=\, \frac{1}{(r+1)!} [\theta.\Omega^r](df) \,=\, \frac{1}{r+1} \iota_{\epsilon}(df) \,=\, f\,\text{.} \end{equation}

\subsubsection{Homological degree $1$} \la{trdeg1}

Let $\omega= \sum_{i=1}^N f_i dx_i$ where the coefficients $f_i$ are homogenous of degree $r+1$.
Note that the summands of $d\omega$ are of the form $u_1\ldots u_r du_{r+1}du_{r+2}$, where $u_1,\ldots,u_{r+2}\,\in\,W$. By~\eqref{trace2}, we need to evaluate $[\theta.\Omega^q](u_1\ldots u_r du_{r+1}du_{r+2})$. Note that
$$\Delta^{q+1}(u_1\ldots u_r du_{r+1}du_{r+2})\,=\, \sum \pm u_{S_1}du_{T_1} \otimes \ldots \otimes u_{S_{q+1}}du_{T_{q+1}} \,,$$
where the summation above runs over $S_1 \sqcup \ldots \sqcup S_{q+1}\,=\,\{1,\ldots,r\}$ and $T_1\sqcup \ldots \sqcup T_{q+1}=\{r+1,r+2\}$. Hence,
\begin{equation} \la{trace12} [\theta.\Omega^q](u_1\ldots u_r du_{r+1}du_{r+2})\,=\, \sum \pm \theta(u_{S_1}du_{T_1}) \Omega(u_{S_2}du_{T_2})\ldots \Omega(u_{S_{q+1}}du_{T_{q+1}}) \,\text{.}\end{equation}
It follows from~\eqref{conncoalg1} and~\eqref{curvcoalg1} that the only summands contributing to the R.H.S of~\eqref{trace12} are those for which $S_1 \,=\,\emptyset$, $|S_2|=\ldots=|S_{q+1}|=1$ and $T_1 \neq \emptyset$. Hence, the R.H.S of~\eqref{trace12} is nonzero only when $q=r$. In this case, the summands for which $T_1=\{r+1,r+2\}$ contribute
$$r!\lambda(u_{r+1},u_{r+2})u_1 \ldots u_r \,=:\, r! s^{-1}(u_1 \ldots u_r du_{r+1} du_{r+2})$$
to $[\theta.\Omega^r](u_1\ldots u_r du_{r+1}du_{r+2})$. The summands for which $|T_1|=1$ together add up to
$$
r! (\sum_{p=1}^i u_1 \ldots \hat{u}_{p} \ldots u_{r+1}\lambda(u_p, u_{r+2}) - u_1 \ldots \hat{u}_{p}\ldots u_{r}u_{r+2} \lambda(u_p,u_{r+1}) $$ $$\,=\, r!s^{-1}(d\iota_{\epsilon}-2)(u_1 \ldots u_r du_{r+1}du_{r+2}))\,\text{.}$$
Hence,
\begin{eqnarray*}
 \la{trace13} \TTr(A)(\omega)& = & \frac{1}{(r+1)!} [\theta.\Omega^r](d\omega)\\
                          &= & \frac{1}{(r+1)!}[r!s^{-1}d\omega + r!s^{-1}(d\iota_{\epsilon}-2)(d\omega)]\\
                         & = &  \frac{1}{(r+1)!}[r!s^{-1}d\omega + r!s^{-1}(d\iota_{\epsilon}+\iota_{\epsilon} d-2)(d\omega)]\\
                         &=& \frac{1}{(r+1)!}[r!s^{-1}d\omega + r!s^{-1}r(d\omega)]\\
                         &=& s^{-1}d\omega \,\text{.} \end{eqnarray*}

\subsubsection{Homological degree $2$} \la{sectr2}

Let $\omega\,=\,\sum_{i<j} f_{ij}dx_idx_j$ be a two-form whose coefficients $f_{ij}$ are homogenous polynomials of degree $r+1$.
Note that the summands of $d\omega$ are of the form $u_1\ldots u_r du_{r+1}du_{r+2}du_{r+3}$, where $u_1,\ldots,u_{r+3}\,\in\,W$. By~\eqref{trace2}, we need to evaluate $[\theta.\Omega^q](u_1\ldots u_r du_{r+1}du_{r+2}du_{r+3})$. Note that
$$\Delta^{q+1}(u_1\ldots u_r du_{r+1}du_{r+2}du_{r+3})\,=\, \sum \pm u_{S_1}du_{T_1} \otimes \ldots \otimes u_{S_{q+1}}du_{T_{q+1}} \,,$$
where the summation above runs over $S_1 \sqcup \ldots \sqcup S_{q+1}\,=\,\{1,\ldots,r\}$ and $T_1\sqcup \ldots \sqcup T_{q+1}=\{r+1,r+2, r+3\}$. Hence,
\begin{equation} \la{trace22} [\theta.\Omega^q](u_1\ldots u_rdu_{r+1}du_{r+2}du_{r+3})\,=\, \sum \pm \theta(u_{S_1}du_{T_1}) \Omega(u_{S_2}du_{T_2})\ldots \Omega(u_{S_{q+1}}du_{T_{q+1}}) \,\text{.}\end{equation}
It follows from~\eqref{conncoalg1} and~\eqref{curvcoalg1} that the only summands contributing to the R.H.S of~\eqref{trace22} are those for which $S_1 \,=\,\emptyset$, $|S_2|=\ldots=|S_{q+1}|=1$ and $T_1 \neq \emptyset$. Hence, the R.H.S of~\eqref{trace22} is nonzero only when $q=r$. The summands for which $T_1=\{r+1,r+2,r+3\}$ contribute
$$ r!s^{-1}(u_1 \ldots u_r du_{r+1}du_{r+2}du_{r+3})$$
to the R.H.S of~\eqref{trace22}. Similarly, the summands for which $|T_1|=1$ and one of  $|T_2| ,\ldots ,|T_{r+1}|$ is $2$ add up to
$$ r!s^{-1}\sum_{p=1}^ru_1 \ldots \hat{u}_p \ldots  u_r( u_{r+1}\lambda(u_p ,u_{r+2}, u_{r+3})-
u_{r+2}\lambda(u_p ,u_{r+1}, u_{r+3})+ u_{r+3}\lambda(u_p ,u_{r+1},u_{r+2}))$$
$$ =r!s^{-1}(d\iota_{\epsilon}-3)(u_1 \ldots u_r du_{r+1} \ldots du_{r+3}) \,\text{.}$$
Similarly, the summands for which $|T_1|=2$ add up to $-2r! \cdot D^{(2,2)}(u_1 \ldots u_r du_{r+1} \ldots du_{r+3})$ ,
where
$$ D^{(2,2)}(u_1 \ldots u_r du_{r+1} \ldots du_{r+3})\,:=\, $$ $$-\frac{1}{2} \sum_{j=1}^r u_1 \ldots \hat{u}_j \ldots u_r [-\lambda(u_j,u_{r+1})\lambda(u_{r+2},u_{r+3})+ \lambda(u_j,u_{r+2})\lambda(u_{r+1},u_{r+3})-  \lambda(u_j,u_{r+3})\lambda(u_{r+1},u_{r+2})]\,\text{.}$$
Finally the summands for which $|T_1|=1$ and two of $|T_2|,\ldots, |T_{r+1}|$ are $1$ add up to $r!\hat{D}^{(2,2,1)}(u_1.\ldots u_rdu_{r+1} \ldots du_{r+3})$ where
$$ \hat{D}^{(2,2,1)}(u_1.\ldots u_rdu_{r+1} \ldots du_{r+3}) := $$
$$ \sum_{1 \leq k \neq l \leq r} u_1 \ldots \hat{u}_k \ldots \hat{u}_l \ldots u_r [u_{r+3}\lambda(u_k,u_{r+1})\lambda(u_l,u_{r+2}) +
u_{r+2}\lambda(u_k,u_{r+1})\lambda(u_l,u_{r+3})+ u_{r+1}\lambda(u_k,u_{r+2})\lambda(u_l,u_{r+3})]$$
A direct computation shows that
\begin{equation} \la{dopcomp} \hat{D}^{(2,2,1)}\circ d\,=\, -(r-1)D^{(2,2)} \circ d  \end{equation}
on $2$-forms with polynomial coefficients that are homogenous of degree $r+1$. Hence,
\begin{eqnarray*}
 \la{trace23} \TTr(A)(\omega)& = & \frac{1}{(r+1)!} [\theta.\Omega^r](d\omega)\\
                          &= & \frac{1}{(r+1)!}[r!s^{-1}d\omega + r!s^{-1}(d\iota_{\epsilon}-3)(d\omega)- 2r!D^{(2,2)}(d\omega)+r!\hat{D}^{(2,2,1)}(d\omega)]\\
                         & = &  \frac{1}{(r+1)!}[r!s^{-1}d\omega + r!s^{-1}(d\iota_{\epsilon}+\iota_{\epsilon} d-3)(d\omega) -2r!D^{(2,2)}(d\omega)- (r-1)r!D^{(2,2)}(d\omega)]\\
                         &=& \frac{1}{(r+1)!}[r!s^{-1}d\omega + r!s^{-1}r(d\omega)- (r+1)r!D^{(2,2)}(d\omega)]\\
                         &=& s^{-1}d\omega -D^{(2,2)}(d\omega) \,\text{.} \end{eqnarray*}

\subsection{Traces as differential operators}

\subsubsection{}

We aim to provide a formula for reduced traces of $A=\Sym(W)$ in arbitrary homological degree. To this end, for each $ 1 \leq p \leq k \leq \dim(W) $,
let us define a linear map
\begin{equation}
\la{dnko}
W \otimes \Lambda^k(W) \to \Lambda^{p}(W)\otimes \Lambda^{k+1-p}(W)
\end{equation}
by
\begin{equation*}
u \otimes v_1\wedge\ldots\wedge v_k \mapsto
\,
\frac{1}{p!} \sum \limits_{j_1 <\ldots<j_{p-1}}(-1)^{\sum j_s -\frac{(p-1)(p-2)}{2}}\,  (u \wedge v_{j_1}\wedge \ldots \wedge v_{j_{p-1}} ) \,
\otimes \,(v_1 \wedge \ldots \wedge \hat{v}_{j_1}\wedge\ldots \wedge\hat{v}_{j_{p-1}}\wedge \ldots \wedge v_k)\,
\end{equation*}
with convention that this is the identity map if $ p = 1 $.
By duality, \eqref{dnko} gives a canonical map
\begin{equation}
\la{dnk}
\Delta_k^{(p, k+1-p)}:\ \Lambda^k(W)\to W^{\ast} \otimes \Lambda^p(W) \otimes  \Lambda^{k+1-p}(W)\,\text{.}
\end{equation}
Now, for any multi-index $\,(i_1,\ldots, i_m) \in \N^m\,$ such that $i_1+ \ldots+i_m = k+m-1$, we can construct
\begin{equation}
\la{dnki}
\Delta_k^{(i_1,\ldots,i_m)}:\, \Lambda^k(W)\to  \Sym^{m-1}(W^{\ast}) \otimes \Lambda^{i_1}(W)\otimes\ldots\otimes\Lambda^{i_m}(W)
\end{equation}
by iterating \eqref{dnk}:
\begin{equation*}
\Lambda^k(W)\to W^{\ast} \otimes \Lambda^{k+1-i_m}(W)\otimes \Lambda^{i_m}(W)  \to\ldots\to  (W^{\ast})^{\otimes m-1} \otimes \Lambda^{i_1}(W)\otimes\ldots\otimes\Lambda^{i_m}(W)
\end{equation*}
and then projecting $\,(W^*)^{\otimes m-1} \onto \Sym^{m-1}(W^*)\,$.

Finally, interpreting the elements $ \Sym(W^*) $ as constant coefficient differential operators on $ \Sym(W) $, we define
the following differential operator on forms
\begin{equation}\la{diffk}
D_k^{(i_1,\ldots,i_m)}:\,
\Sym(W) \otimes  \Lambda^k W   \xrightarrow{1 \otimes \Delta_k^{(i_1,\ldots,i_m)}}
 \Sym(W) \otimes \Sym^{m-1}(W^{\ast})  \otimes \Lambda^{i_1}(W)\otimes\ldots\otimes\Lambda^{i_m}(W)
\xrightarrow{  {\rm act} \otimes (s^{-1})^{\otimes m} } R_{\ab}
\end{equation}
where $\, {\rm act}:\,
\Sym^{m-1}(W^*) \otimes \Sym(W) \to \Sym(W) \into R_{\ab}\,$ is the action map\footnote{View $\Sym(W) \otimes \wedge^k(W)$ as the module of sections of the bundle $\Omega^k_X$, where $X\,:=\, \mathrm{Spec}[\Sym(W)]$. Similarly view $R_{\ab}$ as the module of sections of the corresponding (homologically graded) vector bundle $\mathcal F$ on $X$. By construction, $D_k^{(i_1,\ldots,i_m)}$ can indeed be viewed as a global section of $\mathcal D_X^{ < m} \otimes \mathbf{Hom}(\Omega^k_X, \mathcal F)$, where $\mathcal D_X^{ < m}$ is the sheaf of differential operators of order less than $m$ on $X$.}
and $ s^{-1} $ is the embedding defined by $ s^{-1}(v_1 \wedge \ldots \wedge v_k) \,:=\, \lambda(v_1,\ldots,v_k)$.

For example, the first order differential operator $\,
D_{k}^{(p,\, k+1-p)}: \Omega^k(A) \to R_{\ab} \,$ is
explicitly given by
$$
(u_1 \ldots u_n)\,
dv_1 \ldots  dv_k \,\mapsto\,
 \frac{1}{p!} \sum_{i=1}^n\,(u_1 \ldots \hat{u}_i \ldots u_n)\sum \limits_{j_1 <\ldots<j_{p-1}}\, \pm
\lambda(u_i, v_{j_1}, \ldots, v_{j_{p-1}}) \lambda(v_1, \ldots, \hat{v}_{j_1}, \ldots,
\hat{v}_{j_{p-1}},\ldots, v_k)\,,
$$

where the sign $\pm $ is given by $ (-1)^{\sum j_s- \frac{(p-1)(p-2)}{2} } $ for $j_1 < \ldots <j_{p-1}$. In particular,
$$
D^{(2,2)}[(u_1 \ldots u_n)dv_1dv_2dv_3] \,=\, \frac{1}{2} \sum_{i=1}^n (u_1 \ldots \hat{u}_i \ldots u_n) [\lambda(u_i,v_1)\lambda(v_2,v_3) -\lambda(u_i,v_2)\lambda(v_1,v_3) +\lambda(u_i,v_3)\lambda(v_1,v_2)]
$$

\subsubsection{The trace formula}

In general, we have
\bthm 
\la{trdiffop}
Let $\omega \,\in\, \Sym^{r+1}(W) \otimes \Lambda^l(W) \,\subset\, \Omega^l_A$ be a $l$-form with homogeneous polynomial coefficients of degree $r+1$. Then,
$$ \TTr(A)(\omega)\,=\, s^{-1}(d\omega)+ \sum_{i_1+\ldots+i_m=l+m} c^{(i_1,\ldots,i_m)}D^{(i_1,\ldots,i_m)}(d\omega)\,,$$
where the sum runs over all tuples $(i_1,\ldots,i_m)$ such that $i_1,\ldots,i_{m-1} \geq 2$ and $i_m \geq 1$ adding up to $l+m$ and where $(r+1)c^{(i_1,\ldots,i_m)}$ depends only on $i_1,\ldots,i_m$.
\ethm
\bproof
By~\eqref{trace2},
$$ \TTr(A)(\omega)\,=\, \sum_{q=0}^{\infty} \frac{1}{(q+1)!} [\theta.\Omega^q] (d\omega) \,\text{.}$$
Note that $d\omega$ is a $k$-linear combination of summands of the form
$u_1\ldots u_{r} du_{r+1} \ldots du_{r+l+1}$, where $u_1,\ldots,u_{r+l+1}\,\in\,W$. Further observe that $$\Delta^{q+1}(u_1\ldots u_r du_{r+1}\ldots du_{r+l+1})\,=\, \sum \pm u_{S_1}du_{T_1} \otimes \ldots \otimes u_{S_{q+1}}du_{T_{q+1}} \,,$$
where the summation above runs over $S_1 \sqcup \ldots \sqcup S_{q+1}\,=\,\{1,\ldots,r\}$ and $T_1\sqcup \ldots \sqcup T_{q+1}=\{r+1,\ldots ,r+l+1\}$. Hence,
\begin{equation} \la{tracegen} [\theta.\Omega^q](u_1\ldots u_r du_{r+1}\ldots du_{r+l+1})\,=\, \sum \pm \theta(u_{S_1}du_{T_1}) \Omega(u_{S_2}du_{T_2})\ldots \Omega(u_{S_{q+1}}du_{T_{q+1}}) \,\text{.}\end{equation}
It follows from~\eqref{conncoalg1} and~\eqref{curvcoalg1} that the only summands contributing to the R.H.S of~\eqref{tracegen} are those for which $S_1 \,=\,\emptyset$, $|S_2|=\ldots=|S_{q+1}|=1$ and $T_1 \neq \emptyset$. Hence, the R.H.S of~\eqref{tracegen} is nonzero only when $q=r$.

Given any tuple $(i_1,\ldots,i_m)$ with $i_1,\ldots,i_{m-1} \geq 2$ and $i_m \geq 1$ such that $i_1+\ldots+i_m = l+m$, the summands on the R.H.S of~\eqref{tracegen} with $|T_1|=i_m$ and $m-1$ among $|T_2|,\ldots,|T_{r+1}|$ being equal to $i_1,\ldots i_{m-1}$ contribute $r! \hat{D}^{(i_1,\ldots,i_m)}(u_1\ldots u_{r} du_{r+1} \ldots du_{r+l+1})$, where
$$  \hat{D}^{(i_1,\ldots,i_m)}(u_1\ldots u_{r} du_{r+1} \ldots du_{r+l+1})\,=\,$$ $$ \sum_{\stackrel{(|T_1|,\ldots,|T_m|)=(i_m,i_1-1,\ldots,i_{m-1}-1)}{T_1 \sqcup \ldots \sqcup T_m=\{ r+1,\ldots,r+l+1\}}}\,\, \sum_{j_1 \neq \ldots \neq j_{m-1}} \pm u_1 \ldots \hat{u}_{j_1} \ldots \hat{u}_{j_{m-1}} \ldots u_r s^{-1}(du_{T_1})s^{-1}(du_{j_1}du_{T_2}) \ldots s^{-1}(du_{j_{m-1}}du_{T_m}) \,\text{.}$$
Note that
$$ \hat{D}^{(i_1,\ldots,i_m)}\,=\, \hat{c}^{(i_1,\ldots,i_m)}D^{(i_1,\ldots,i_m)} \,,$$
where the constant $\hat{c}^{(i_1,\ldots,i_m)}$ depends only on $(i_1,\ldots,i_m)$.\\

The summands on the R.H.S for which $|T_1|=l+1$ contribute $r!s^{-1}(u_1\ldots u_r du_{r+1}\ldots du_{r+l+1})$. Similarly,
$$\hat{D}^{(l,1)}(u_1\ldots u_{r} du_{r+1} \ldots du_{r+l+1})\,=\, \sum_{i=1}^r \sum_{j=1}^{l+1}(-1)^{j-1} u_1 \ldots \hat{u_i} \ldots u_r u_{r+j} s^{-1}(du_idu_{r+1} \ldots \hat{du}_{r+j} \ldots du_{r+l+1}) $$ $$\,=\,s^{-1}(d\iota_{\epsilon}-l-1)(u_1\ldots u_{r} du_{r+1} \ldots du_{r+l+1})\,\text{.}$$

It follows that
$$ \frac{1}{(r+1)!}[\theta.\Omega^r](\eta) = \frac{1}{r+1}s^{-1}(\eta)+ \frac{1}{r+1} s^{-1}(d\iota_{\epsilon}-l-1)(\eta) + \sum_{i_1+\ldots+i_m=l+m} \frac{1}{r+1} \hat{D}^{(i_1,\ldots,i_m)}(\eta) \,$$
$$ = \frac{1}{r+1}s^{-1}(\eta)+ \frac{1}{r+1} s^{-1}(d\iota_{\epsilon}-l-1)(\eta) + \sum_{i_1+\ldots+i_m=l+m} \frac{1}{r+1}\hat{c}^{(i_1,\ldots,i_m)}D^{(i_1,\ldots,i_m)}(\eta) \,,$$
for any $\eta\,\in\, \Sym^r(W) \otimes \Lambda^{l+1}(W) \subset \Omega^{l+1}_A\, $.
Hence, for $\omega \,\in\, \Sym^{r+1}(W) \otimes \Lambda^l(W)$,
 \begin{eqnarray*}
 \frac{1}{(r+1)!}[\theta.\Omega^r](d\omega) &=& \frac{1}{r+1}s^{-1}(d\omega)+ \frac{1}{r+1} s^{-1}(d\iota_{\epsilon}-l-1)(d\omega) + \sum_{i_1+\ldots+i_m=l+m} \frac{1}{r+1}\hat{c}^{(i_1,\ldots,i_m)}D^{(i_1,\ldots,i_m)}(d\omega)\\
  &=& s^{-1}(d\omega) + \sum_{i_1+\ldots+i_m=l+m} \frac{1}{r+1}\hat{c}^{(i_1,\ldots,i_m)}D^{(i_1,\ldots,i_m)}(d\omega)\,\text{.}\end{eqnarray*}
This proves the desired result.
\eproof

\begin{remark}
Comparing the formula in Theorem~\ref{trdiffop} with the computation in Section~\ref{sectr2} when $l=2$,we see that $c^{(2,2)}\,=\,\frac{-2}{r+1}$ and $c^{(2,2,1)}  \neq 0$. However, since $\hat{D}^{(2,2,1)} \circ d\,=\, -(r-1) D^{(2,2)} \circ d$, the summands
$\frac{1}{r+1}\hat{D}^{(2,2)}(d\omega)$ and  $\frac{1}{r+1}\hat{D}^{(2,2,1)}(d\omega)$ of $\TTr(A)(\omega)$ add up to $-D^{(2,2)}(d\omega)$.
\end{remark}

\subsection{Examples}
\la{exs44}
We illustrate the formulas of Section~\ref{trdiffoplowdeg} for polynomial algebras in two and three variables. 

\subsubsection{Polynomials of two variables}
Let $ \dim(W) = 2 $. Choose a basis in $ W $ and identify $ A = k[x,y] \,$. Then $ R = k \langle x,\,y,\,t\rangle $
with $\deg x=\deg y=0$ and $\deg t = 1 $. The differential on $R$
is defined by $\, \delta t=[x,y] \,$, so that $\, t = - \lambda(x,y) \,$,
cf. \eqref{d1}. Section~\ref{trdeg1} says that  $\,\TTr(A):\,\Omega^1_A \to
R_{\ab} \,$ is given by
$$
\TTr(A)[P\,dx + Q\,dy]\, =
\,s^{-1}[(Q_x - P_y) dxdy] =
(Q_x - P_y)\,\lambda(x,y) = (P_y - Q_x)\,t\ .
$$
This formula can also be obtained directly from the explicit formulas of \cite[Ex.~4.1]{BKR}.

\subsubsection{Polynomials of three variables}
Let $ A = k[x,y,z] $. Using the notation of \cite[Ex.~6.3.2]{BFR},
we write the minimal resolution of $A$ in the form $\,R = k \langle x,y,z, \xi,\theta, \lambda, t\rangle\,$, where $\deg x=\deg y=\deg z=0$, $\deg \xi=\deg \theta=\deg\lambda=1$ and $\deg t=2$. The differential on $R$ is defined by
\begin{equation*}
\label{diff_on_R}
\delta\xi=[y,z],\quad \delta\theta=[z,x],\quad \delta\lambda=[x,y];\quad \delta t=[x,\xi]+[y,\theta]+[z,\lambda]
\end{equation*}
Comparing with \eqref{d1} we see that
$$
\xi = \lambda(z,y)\ ,\quad
\theta = \lambda(x,z)\ ,\quad
\lambda = \lambda(y,x)\ , \quad
t = \lambda(x,y,z)\ .
$$
By Section~\ref{trdeg1},  $\,\TTr(A):\,\Omega^1_A \to R_{\ab} \,$ is given by
\begin{eqnarray*}
\TTr(A)[P\,dx + Q\,dy + R\,dz]\,
&=&
\,s^{-1}[(Q_x - P_y) dx dy + (R_y - Q_z) dy  dz +
(P_z - R_x) dz dx] \\*[1.5ex]
&=&
(Q_x - P_y)\,\lambda(x,y) + (R_y - Q_z)\,\lambda(y,z) +
(P_z - R_x)\,\lambda(z,x) \\*[1.5ex]
& = &
(P_y - Q_x)\,\lambda + (Q_z - R_y)\,\xi + (R_x - P_z)\,\theta
\ .
\end{eqnarray*}

Next, to compute $\,\TTr(A)_2\,$ we take $ \omega \in \Omega^2(A) $ in the form
$$
\omega = P\, dxdy + Q \,dy dz + R\, dzdx \ .
$$
The trace formula in Section~\ref{sectr2} implies (after a tedious but straightforward calculation) that
\begin{equation*}
\TTr[\omega]= (P_z+Q_x+R_y)\,t\,+\,(P_{z}+Q_{x}+R_{y})_x\ \theta  \lambda +
(P_{z}+Q_{x}+R_{y})_y\ \lambda  \xi +(P_{z}+Q_{x}+R_{y})_z\ \xi  \theta \ .
\end{equation*}

\section{Reduced traces: a combinatorial description}
\la{S5}
In this section, we will give another formula for reduced trace maps of symmetric
algebras in terms of binary trees. Throughout, we will keep the notation and 
assumptions of the previous section. 

Our starting point is Theorem~4.2 of \cite{BKR} that gives a general formula for 
the derived character maps in terms of Taylor components 
$\,f_{k+1}:\,A^{\otimes (k+1)} \to R $ of an $A_\infty$-quasi-isomorphism 
$f: A \to R $ inverting a DG algebra resolution of $ A $. We apply this 
formula to the minimal resolution $ R := \cb(C) $ of the symmetric algebra 
$ A = \Sym(W) $; as a consequence, we get the following
\bprop
\la{prtr2}
The map $\, \TTr(A) :\, \Omega^\bullet_A/d\Omega^{\bullet-1}_A \to R_{\ab} \,$ is 
given by the formula
\begin{equation}
\la{trfor}
\TTr(A) [a_0 \, da_1  \ldots  da_k] =
\sum_{\sigma \in \Sb_{k+1}}\,
(-1)^{\sigma} \, \bar{f}_{k+1}(a_{\sigma^{-1}(0)}, a_{\sigma^{-1}(1)},\ldots,
a_{\sigma^{-1}(k)})\ ,
\end{equation}
where $\,\bar{f}_{k+1}:\,A^{\otimes (k+1)} \to R \onto R_{\ab} \,$ are
the components of the $A_\infty$-morphism $ A \xrightarrow{f} R \onto R_{\ab} $.
\eprop
\bproof
$\, \TTr(A)\,$ is induced by the composition
$\,\bar{T}:\, \Omega^k_A/d\Omega^{k-1}_A \xrightarrow{\varepsilon_k} \overline{\mathrm{C}}^{\lambda}_k(A) \xrightarrow{\mathrm{can} \circ T} R_{\ab} \,$. Here, $T$ is as in~\eqref{bkr422}.
By~\cite[(4.21)]{BKR}, $\bar{T}$ is given by
$$
\sum_{\tau \in \Z_{k+1}} (-1)^{\tau}
\sum_{\sigma \in \Sb_k}\,(-1)^{\sigma} \, \bar{f}_{k+1} (a_{\tau^{-1}\sigma^{-1}(0)}, a_{\tau^{-1}\sigma^{-1}(1)},\ldots, a_{\tau^{-1}\sigma^{-1}(k)})\ ,
$$
where $ \sigma $ ranges over the subgroup of permutations of $\{0, 1, \ldots, k\}$
preserving $0$ and $ \tau $ ranges over the cyclic subgroup $ \Z_{k+1} $ of $ \Sb_{k+1} $ generated by $ i \mapsto i+1 $. Since  $ \Sb_{k+1} $ is the product of its subgroups
$ \Sb_k $ and $ \Z_{k+1} $, the above sum equals the right-hand side of \eqref{trfor}.
\eproof

\subsection{Merkulov's construction}
\la{SMer}
Let $ \pi \colon R \sonto A $ be a fixed semi-free resolution. Choose a linear section $ f_1 $
of $ \pi $ and identify $A$ with its image in $ R $ under $f_1$. Since $R$ is quasi-isomorphic to $A$, the complex $ R_\bullet $ is acyclic in all degrees $ \geq 1$. For each $ i \ge 0 $, we fix a decomposition of $ R_i $ such that $ R_0 = A\oplus B_0 $ and $ R_i = B_i \oplus L_i $ for $i\geq 1$. Here $B_i=d_{i+1}(R_{i+1})\subseteq R_i$, and $L_i=s_i\left(R_i/B_i\right)\subseteq R_i$, where $s_i\colon R_i/B_i \into R_i$ is a section of the canonical projection $p_i\colon R_i\onto R_i/B_i\,$.

Next, we pick a homotopy $ h \colon R\to R[1] $ between the maps $\id_R$ and $f_1\circ\pi$ satisfying $h_i|_{L_i}=0, h_0|_A=0$ and $h_i|_{B_i}\colon B_i \stackrel{\sim}{\to} L_{i+1}$.
One can construct the components $ h_i: R_i \to R_{i+1} $ of $ h $ inductively:

Since $d_0=0$, the equation for $h_0$ simply is
\begin{equation}
d_1 h_0= \id_R-f_1\pi\ .
\end{equation}
For $n\geq 1$, we have $ \pi|_{R_n}\equiv 0$. Hence  $h_n$ is defined by
\begin{equation}
\la{hn}
d_{n+1} h_n=\id_R - h_{n-1} d_n
\end{equation}
Now, given $ h: R \to R[1] $, for $ i\geq 1 $ we define the operations
$ \mu_i \colon R^{\otimes i}\to R$ by
\begin{itemize}
\item There is no $\,\mu_1\,$, but we formally set $ h \mu_1 := - \id_R $;
\item $\mu_2: R \otimes R \to R $ is the multiplication map $\,
\mu_2(a_1\otimes a_2)=a_1a_2\,$;
\item For $ i \ge 2 $,  $ \mu_i $ is a map of degree $ i - 2 $ defined by
\begin{equation}
\la{merk_mu}
\mu_i := \sum\limits_{\substack{s+t=i\\ s,t\geq 1}} (-1)^{s+1}\mu_2(h\mu_s\otimes h\mu_t)\ .
\end{equation}

\end{itemize}

Finally, for $ k \ge 1 $, we define  $\,f_{k+1}:\,A^{\otimes (k+1)} \to R\,$ by
\begin{equation}
\la{merk}
f_{k+1} := - h_{k-1} \circ \mu_{k+1} \circ f_1^{\otimes(k+1)}\ .
\end{equation}
\noindent

The following observation is due to Merkulov \cite{M} (see also \cite[Prop. 2.3, Lemma~2.5]{LPWZ}).

\bthm
The maps \eqref{merk} define an $ A_{\infty}$-quasi-isomorphism $ f: A \to R $ inverse to $\,\pi\,$.
\ethm

\begin{remark}
In general, if $ R $ is any DG algebra, the above construction also yields
a (minimal) $A_\infty$ structure on  $ \H_\bullet(R) \,$.
The corresponding higher multiplications are
defined by  $ m_k := \pi\circ \mu_k \circ f_1^{\otimes(k+1)}\,$, $\, k\ge 2 $.
In the case when $ \H_\bullet(R) = A$ is an ordinary algebra, the operations $\, m_3, m_4, \ldots $  are trivial for degree reasons, while $ m_2 $ coincides with the induced multiplication
on $ A $, since $ m_2(a_1, a_2)=\pi(f_1(a_1)f_1(a_2)) = \pi(f_1(a_1))\pi(f_1(a_2)) = a_1 a_2 $.
\end{remark}

\subsection{Traces and binary trees}

Substituting \eqref{merk} into formula \eqref{trfor} we  get
\begin{equation}
\label{Tr_k}
\TTr(a_0da_1\ldots da_k)\, = \,\sum\limits_{\sigma\in \Sb_{k+1}}(-1)^{1+\sigma}\,
\bar{h}_{k-1} \, \mu_{k+1}\left(f_1(a_{\sigma(0)}),\dots,f_1(a_{\sigma(k)})\right)
\end{equation}
Merkulov's construction provides us with the recursive formula \eqref{merk_mu} for $\mu_{k+1}$, and hence for $f_{k+1}$, in terms of operations $\mu_i$ with $i<k+1$ and homotopy $h$.

 By $\PBT_k$ we denote the set of rooted planar binary trees with $k+1$ leaves. Operation $f_T$ for a tree $T\in\PBT_k$ is defined in the following way. First of all, we will label all the leaves and internal vertices in the following way. Every leaf we will label by $0$. After that, if a vertex $v$ has left son with label $l$ and right son with label $r$, then we label $v$ by $l+r+1$. After that, we insert $f_1$ into each leaf, and if a vertex $v$ was labeled by some number $l$, we insert $h_{l-1}\mu_2$ into $v$. In the very last vertex (the one that is adjacent with the root) we insert $-h_{k-1}\mu_2$. Then moving along the tree down to the root we can read off the map $f_{k+1}$. For example, $f_T$ for the trees
$$
T_1=\arbreBA\hspace{10mm}T_2=\arbreAB
$$
will be just $f_{T_1}(a_0,a_1,a_2)=-h_1\mu_2(f_1(a_0)\otimes h_0\mu_2(f_1(a_1)\otimes f_1(a_2)))=-h_1(\tilde{a}_0\cdot h_0(\tilde{a}_1\cdot\tilde{a}_2))$ and $f_{T_2}(a_0,a_1,a_2)=-h_1(h_0(\tilde{a}_0\cdot \tilde{a}_1)\cdot\tilde{a}_2)$.

There is an algorithm how to determine the sign $(-1)^T$ that corresponds to a tree $T$. First we label leaves by `$+1$'. After that, for any vertex $v$ that has left son labeled by a sign $l$ and right son labeled by $r$, we label $v$ by $l\cdot r\cdot (-1)^{s+1}$, where $s$ is the number of leaves to the left from $v$. Then the sign of the tree $(-1)^T$ is by definition the sign of the last vertex (the one that is adjacent with the root). For example, for the trees $T_1$ and $T_2$ above we will have $(-1)^{T_1}=1$ and $(-1)^{T_2}=-1$.

The construction defined above (almost) coincides with the construction given in \cite[Sect.~6.4]{ks}. The only difference is that we expanded all higher multiplications $\mu_i$ with $i>2$.

\blemma[{\it cf.} \cite{ks}]
\label{f_k_formula_trees}
\begin{equation}
\la{fk1}
f_{k+1}=\sum\limits_{T\in\PBT_k}(-1)^T f_T
\end{equation}
\elemma
\bproof
The proof can be obtained by easy induction on the number of vertices.
\eproof
Now if we apply the result of the lemma \ref{f_k_formula_trees} to the formula \eqref{Tr_k} we will get the following expression for the reduced trace:
\begin{equation}
\label{Tr_k_trees}
\TTr(a_0da_1 \dots da_k)\,=\, \sum\limits_{\sigma\in \Sb_{k+1}}(-1)^\sigma\sum\limits_{T\in\PBT_k}(-1)^T\, \bar{f}_{T}\left(\tilde{a}_{\sigma(0)},\dots,\tilde{a}_{\sigma(k)}\right).
\end{equation}

Two {\it labeled} planar rooted binary trees $(\sigma, T)$ and $(\sigma',T')$ are {\it equivalent} if there exists a rooted tree isomorphism $\varphi\,:T \rar T'$ that preserves the labels on leaves (labeling is given by a choice of $\sigma \,\in\,\Sb_{k+1}$ which we think of as labels on $k+1$ leaves). Let's denote the set of equivalence classes of pairs $(\sigma, T)$ by $\clLPBT_k$.

For any tree $T$ define $[f]_{T}$ to be a map, obtained from $f_{T}$ by replacing any $\mu_2(a\otimes b)$ appearing in $f_{T}$ by $[a,b]$. For example, if $f_{T}(a_0,a_1,a_2)=-h_1(h_0(\tilde{a}_0\cdot \tilde{a}_1)\cdot\tilde{a}_2)$, then $[f]_T=-h_1[h_0[\tilde{a}_0, \tilde{a}_1],\tilde{a}_2]$.

\blemma \la{treeform}
\begin{equation}
\label{Tr_k_trees_comm}
\TTr(a_0da_1 \dots  da_k)\, = \, \sum\limits_{[\sigma,T]\in \clLPBT_k}(-1)^{\sigma_0}\cdot(-1)^{T_0}\,[\,\bar{f}\,]_{T_0}\left(\tilde{a}_{\sigma_0(0)},\dots,\tilde{a}_{\sigma_0(k)}\right).
\end{equation}
Here $(\sigma_0,T_0)$ is a representative of the class $[\sigma,T]$.
\elemma
\bproof
Straightforward induction on the number of vertices.
\eproof

\vspace{10pt}

As an example, let us consider the case $k=3$. There are $5$ elements in $\PBT_3$:
$$
T_1=\arbreABC\quad T_2=\arbreBAC\quad T_3=\arbreCAB\quad T_4=\arbreCBA\quad T_5=\arbreACA
$$

Their signs are going to be $(-1)^{T_1}=-1$, $(-1)^{T_2}=+1$, $(-1)^{T_3}=-1$, $(-1)^{T_4}=+1$ and $(-1)^{T_5}=-1$.

There are $15$ equivalence classes in $\clLPBT_3$. There are $3$ classes $[(\sigma,T)]$ with $T=T_5$ and $\sigma\in\Sigma_1:=\{(0123),(0213),(0312)\}$. There are $12$ classes $[(\sigma,T)]$ with $T=T_4$ and $\sigma\in\Sigma_2=\{(\sigma(0),\sigma(1),\sigma(2),\sigma(3))\in \Sb_4\mid\sigma(2)<\sigma(3)\}$. So for $\TTr_3$, we will have the following explicit formula
\begin{eqnarray}
\TTr(a_0da_1da_2 da_3)\,=\,\sum\limits_{\sigma\in\Sigma_1}(-1)^{\sigma}\,\bar{h}_2\left[h_0[\tilde{a}_{\sigma(0)}, \tilde{a}_{\sigma(1)}],h_0[ \tilde{a}_{\sigma(2)}, \tilde{a}_{\sigma(3)}]\right]\\
+\sum\limits_{\sigma\in\Sigma_2}(-1)^{1+\sigma}\,\bar{h}_2\left[ \tilde{a}_{\sigma(0)},h_1[\tilde{a}_{\sigma(1)},h_0[\tilde{a}_{\sigma(2)},\tilde{a}_{\sigma(3)}]]\right]\nonumber
\end{eqnarray}

\noindent
More explicitly,
\begin{eqnarray*}
\TTr(a_0da_1 da_2 da_3)&=&\bar{h}_2\left[h_0[\tilde{a}_0,\tilde{a}_1],h_0[\tilde{a}_2,\tilde{a}_3]\right]-\bar{h}_2\left[h_0[\tilde{a}_0,\tilde{a}_2],h_0[\tilde{a}_1,\tilde{a}_3]\right]+ \bar{h}_2\left[h_0[\tilde{a}_0,\tilde{a}_3],h_0[\tilde{a}_1,\tilde{a}_2]\right] \\*[1ex]
&-&\bar{h}_2\left[\tilde{a}_0,h_1\left[\tilde{a}_1,h_0[\tilde{a}_2,\tilde{a}_3]\right]\right]
+\bar{h}_2\left[\tilde{a}_0,h_1\left[\tilde{a}_2,h_0[\tilde{a}_1,\tilde{a}_3]\right]\right]
-\bar{h}_2\left[\tilde{a}_0,h_1\left[\tilde{a}_3,h_0[\tilde{a}_1,\tilde{a}_2]\right]\right]\\*[1ex]
&+&\bar{h}_2\left[\tilde{a}_1,h_1\left[\tilde{a}_0,h_0[\tilde{a}_2,\tilde{a}_3]\right]\right]
-\bar{h}_2\left[\tilde{a}_2,h_1\left[\tilde{a}_0,h_0[\tilde{a}_1,\tilde{a}_3]\right]\right]
+\bar{h}_2\left[\tilde{a}_3,h_1\left[\tilde{a}_0,h_0[\tilde{a}_1,\tilde{a}_2]\right]\right]\\*[1ex]
&-&\bar{h}_2\left[\tilde{a}_1,h_1\left[\tilde{a}_2,h_0[\tilde{a}_0,\tilde{a}_3]\right]\right]
+\bar{h}_2\left[\tilde{a}_1,h_1\left[\tilde{a}_3,h_0[\tilde{a}_0,\tilde{a}_2]\right]\right]
-\bar{h}_2\left[\tilde{a}_2,h_1\left[\tilde{a}_3,h_0[\tilde{a}_0,\tilde{a}_1]\right]\right]\\*[1ex]
&+&\bar{h}_2\left[\tilde{a}_2,h_1\left[\tilde{a}_1,h_0[\tilde{a}_0,\tilde{a}_3]\right]\right]
-\bar{h}_2\left[\tilde{a}_3,h_1\left[\tilde{a}_1,h_0[\tilde{a}_0,\tilde{a}_2]\right]\right]
+\bar{h}_2\left[\tilde{a}_3,h_1\left[\tilde{a}_2,h_0[\tilde{a}_0,\tilde{a}_1]\right]\right]\phantom{.+}
\end{eqnarray*}

By Theorem~\ref{usualtrace} and Lemma~\ref{treeform}, we have
\bcor \la{cstree}
$$  \sum\limits_{[\sigma,T]\in \clLPBT_k}(-1)^{\sigma_0}\cdot(-1)^{T_0}\,[\,\bar{f}\,]_{T_0}\left(\tilde{a}_{\sigma_0(0)},\dots,\tilde{a}_{\sigma_0(k)}\right) \,=\, \sum_{q=0}^{\infty} \frac{1}{(q+1)!} [\theta.\Omega^q](da_0da_1 \ldots da_k)\,\text{.}$$
\ecor

\appendix

\section{Chern-Simons forms, Lie and cyclic homology}

We now give a detailed exposition of the construction of an additive analog of the Borel regulator map as outlined in~\cite[Sec. A.6]{Be}. We then compare this map with a related construction dual to the Drinfeld traces (see Theorem~\ref{csandabscomp} below).

\subsection{The convolution algebra} \la{conv}

Let $A$ be a commutative DG algebra and let $\g$ be a finite-dimensional Lie algebra. Invert degrees to turn $A$ into a {\it cohomologically graded} DG algebra. Then, the Chevalley-Eilenberg complex $\CE(\g(A);k)$ is a cocommutative (cohomologically graded) DG coalgebra.  As a result, one has the commutative DG {\it convolution} algebra
$$ \mathcal A\,:=\, \mathbf{Hom}(\CE(\g(A);k),\dr(A))\,\text{.}$$
Note that as cohomologically graded algebras, $ \mathcal A\,\cong\, \oplus_{i,j} \mathcal A^{i,j} $
where
$$\mathcal A^{i,j}\,:=\,\mathbf{Hom}(\wedge^j \g(A), \Omega^i_A)[-i-j] \,\text{.}$$
Equip $\mathcal A$ with the connection $\theta\,\in\, (\mathcal A^{0,1})^1 \otimes \g \subset \mathcal A^1 \otimes \g$ given by the formula
\begin{equation} \la{conn} \theta(\xi \otimes A)\,=\, a \otimes \xi \,\in\, A \otimes \g\,,\,\,\,\,\,\forall\,\, a \in A\,,\,\xi \in \g\,\text{.} \end{equation}
The following proposition follows from a straightforward computation.
\bprop \la{pcurv}
The curvature
 $\Omega$ of $\theta$ lies in the summand
$\mathcal A^{1,1} \otimes \g$ of $\mathcal A^{2} \otimes \g$. Explicitly,
$$\Omega \in \Hom_{\c}(\g(A)[1], \Omega^1_A[-1])\,,\,\,\,\, \xi \otimes a \mapsto da \otimes \xi \,\text{.}$$
Similarly, the element $[\theta\,,\,\theta]\,\in\, \mathcal A^{0,2} \otimes \g$ is given by
$$  [\theta, \theta] [(\xi_1 \otimes a_1) \wedge (\xi_2 \otimes a_2)]\,=\, -2 (-1)^{|a_1|} a_1a_2 \otimes [\xi_1,\xi_2]\,\text{.}$$
\eprop

\bprop \la{pcurv2}
The element $\Omega^n \in \mathcal A^{n,n} \otimes \g^{\otimes n} = \Hom_{\c}(\wedge^n\g(A)[n], \Omega^n_A[-n]) \otimes \g^{\otimes n}$ is given by $$
(\xi_1 \otimes a_1) \wedge \ldots \wedge(\xi_n \otimes a_n) \mapsto \sum_{\sigma \in \Sb_n}  da_1 \ldots da_n \otimes \xi_{\sigma(1)} \otimes \ldots \otimes \xi_{\sigma(n)} $$
\eprop
\bproof
By Proposition~\ref{pcurv}, the summand of $\Delta^{(n)}[(\xi_1 \otimes a_1) \wedge \ldots \wedge( \xi_n \otimes a_n)]$ contributing to  $\Omega^{\otimes n} \circ \Delta^{(n)}[(\xi_1 \otimes a_1) \wedge \ldots \wedge( \xi_n \otimes a_n)] $ is given by
$$ \sum_{\sigma \in \Sb_n} (-1)^{f(\sigma, |a_1|,\ldots,|a_n|)} (\xi_{\sigma(1)} \otimes a_{\sigma(1)}) \boxtimes \ldots \boxtimes (\xi_{\sigma(n)} \otimes a_{\sigma(n)}) \,\text{.}$$
Here, $(-1)^{f(\sigma, |a_1|,\ldots,|a_n|)}$ is the sign obtained after applying $\sigma$ to a product of elements of degrees $|a_1|+1,\ldots,|a_n|+1$ in a commutative graded algebra. By a second use of Proposition~\ref{pcurv},
\begin{eqnarray*}
\Omega^n[(\xi_1 \otimes a_1) \wedge \ldots \wedge( \xi_n \otimes a_n)]&=& \sum_{\sigma} (-1)^{f(\sigma, |a_1|,\ldots,|a_n|)}  da_{\sigma(1)}\ldots da_{\sigma(n)} \otimes \xi_{\sigma(1)} \otimes \ldots \otimes \xi_{\sigma(n)}\\
 &=& \sum_{\sigma} da_1 \ldots da_n \otimes  \xi_{\sigma(1)} \otimes \ldots \otimes \xi_{\sigma(n)}\,\text{.} \end{eqnarray*}
This finishes the proof of the proposition.
\eproof

\bprop 
\la{ptel} 
For any $\, P\,\in\,I^{r+1}(\g) $, we have
\begin{equation*} 
P(\theta. [\theta, \theta]^{n-r}. \Omega^{2r-n})[(\xi_0 \otimes a_0) \wedge \ldots \wedge (\xi_n \otimes a_n)]  \,=\, \sum_{\sigma \in \Sb_{n+1}} \, \pm\, A_{\sigma,P}\ a_{\sigma(0)}\ldots a_{\sigma(2n-2r)} \,da_{\sigma(2n-2r+1)}\ldots da_{\sigma(n)} 
\end{equation*}
where 
\[
A_{\sigma,P}\,= c_{n,r}\,P(\xi_{\sigma(0)}, [\xi_{\sigma(1)}, \xi_{\sigma(2)}],\ldots,[\xi_{\sigma(2n-2r-1)}, \xi_{\sigma(2n-2r)}], \xi_{2n-2r+1}, \ldots, \xi_{\sigma(n)}) \,\text{.}
\]
Here $c_{n,r}$ is a nonzero constant depending only on $n$ and $r$, with $ c_{n,n} = 1 $ 
\mbox{\rm ;} 
the sign $\, \pm \,$ in the sum is obtained by applying $\sigma$ to a product of elements of degrees $|a_0|+1,\ldots,|a_n|+1$ in a commutative graded algebra.
\eprop

\begin{remark} 
The formula of Proposition~\ref{ptel} appears in \cite{Te} as an {\it ad hoc} definition 
(see {\it op.cit.}, (2.2)). Proposition~\ref{ptel} thus explains the origin of this formula and clarifies the computations of \cite{Te}.
\end{remark}

\begin{proof}  Let $\beta_i:= \xi_i \otimes a_i$ for brevity. Indeed, the component of
$\Delta^{(n+1)} [(\xi_0 \otimes a_0) \wedge \ldots \wedge (\xi_n \otimes a_n)] $
in $ \g(A) \boxtimes \wedge^2 \g(A)^{\boxtimes n-r} \boxtimes \g(A)^{\boxtimes 2r-n} $
is given by
$$C'. \sum_{\sigma \in \Sb_{n+1}} \pm \beta_{\sigma(0)} \boxtimes \beta_{\sigma(1)} \wedge \beta_{\sigma(2)} \boxtimes \ldots \boxtimes \beta_{\sigma(2n-2r-1)} \wedge \beta_{\sigma(2n-2r)} \boxtimes \beta_{2n-2r+1} \boxtimes \ldots \boxtimes \beta_{\sigma(n)} $$
where $C'$ is a positive constant depending only on $n$  and $r$. By Proposition~\ref{pcurv}, the desired proposition follows (with $C = C'(-2)^{n-r}$).
\eproof

\subsection{Lie and cyclic homology}

Let $P\,\in\,I^{r+1}(\g)$. Let $\mathcal A$, $\theta$ be as in Section~\ref{conv}. Recall from~\eqref{csdecomp} that $ \TP(\theta)\,=\,\sum_{n=r}^{2r} A_{n-r}\Psi_{n-r,P}$
where $\Psi_{n-r,P}\,\in\, \Hom(\Sym^{n+1}(\g(A)[1]), \Omega^{2r-n}_A[n-2r])$.
Also recall that in Section~\ref{conv}, the original homological grading in $A$ was inverted to give a cohomological grading for $\C(\g(A);k)$ and allow $\dr(A)$ and $\mathcal A$ to have their natural cohomological gradings. Invert homological degrees once again to restore the original homological grading of $A$, and thereby $\C(\g(A);k)$. This inverts the natural cohomological gradings of $\dr(A)$ and $\mathcal A$, giving them a homological grading. Let $s$ denote the operator increasing homological degree by $1$. Then, $s^{2r}\TP(\theta) \,\in\, \oplus_{n=r}^{2r} \Hom(\Sym^{n+1}(\g(A)[1]), \Omega^{2r-n}_A[n])$. Recall that
$$ \mathrm{CC}^{(r)}(\drm(A))\,:=\, (\oplus_{n=r}^{2r} \Omega^{2r-n}_A[n], d+\delta) \,,$$
where $d\,:\,\Omega^{2r-n-1}[n-1] \rar \Omega^{2r-n}[n]$ is viewed as a differential with homological degree $-1$ that vanishes when $n=r$. It follows that $s^{2r}\TP(\theta)$ gives a map of degree $-1$ from $\CE(\g(A);k)$ to
$\mathrm{CC}^{(r)}(\drm(A))$.
\bthm \la{csliecyclic}
Let $A$ be a smooth commutative DG algebra. Then $s^{2r}\TP(\theta)$ induces a map on homologies $\H_{\bullet+1}(\g(A);k) \rar \rHC_{\bullet}^{(r)}(A)$.
\ethm
%

%

\bproof
By Proposition~\ref{csforms}, $d(\TP(\theta))\,=\, P(\Omega^{r+1})\, \text{.}$
Hence,
$$ \sum_{n=r}^{2r} A_{n-r}[(d+\delta)\Psi_{n-r,P} + \Psi_{n-r,P}(d+\delta)] \,=\, P(\Omega^{r+1}) \,\text{.}$$
Comparing the components of both sides in $\Hom(\Sym^{r+1}(\g(A)[1]), \Omega^{r+1}_A[-r-1])$, we see that
\begin{flalign}
&d \Psi_{0,P} \,=\, P(\Omega^{r+1})&\\
\la{trunc} &\delta\Psi_{0,P}+ \sum_{n=r+1}^{2r} A_{n-r}[(d+\delta)\Psi_{n-r,P} + \Psi_{n-r,P}(d+\delta)] \,=\, 0&
\end{flalign}
We now note that the left hand side of~\eqref{trunc} is exactly $ds^{2r}\TP(\theta)$, provided $s^{2r}\TP(\theta)$ is viewed as a degree $-1$ element of $\Hom(\CE(\g(A);k), \mathrm{CC}^{(r)}(\drm(A)))$: indeed, the differential of $\mathrm{CC}^{(r)}(\drm(A))$ restricted to the graded subspace $\Omega^r_A[r]$ of $\mathrm{CC}^{(r)}(\drm(A))$ is exactly $\delta$. Hence, $s^{2r}\TP(\theta)$ gives a map of complexes from $\CE(\g(A);k)[1]$ to $\mathrm{CC}^{(r)}(\drm(A))$. The induced map on homologies gives a map of graded vector spaces from $\H_{\bullet+1}(\g(A);k)$ to $\rHC^{(r)}_{\bullet}(A)$ as desired.
\eproof

Note that if $A$ is a smooth {\it augmented} commutative DGA, then the connection $\theta$ restricts to a connection on the convolution DGA $\mathbf{Hom}(\C(\g(\bar{A});k), \dr(A))$. It follows that Theorem~\ref{csliecyclic} can be modified for smooth augmented commutative DG algebras, giving
\bthm \la{csliecyclicaug}
Let $A$ be a smooth augmented commutative DG algebra. Then $s^{2r}\TP(\theta)$ induces a map on homologies $\H_{\bullet+1}(\g(\bar{A});k) \rar \rHC_{\bullet}^{(r)}(A)$.
\ethm
Next, we compare $s^{2r}\TP(\theta)$ with another construction of a map from $\H_{\bullet+1}(\g(\bar{A});k)$ to $\rHC_{\bullet}^{(r)}(A)$ that we give below.

\subsection{From Lie to cyclic homology: the second construction}
Let $A$ be an augmented commutative DG algebra. There is a more direct construction of a map from the Lie homology of $\g(\bar{A})$ to the (shifted) reduced cyclic homology of $A$, which works even if $A$ is not smooth. This construction is dual the construction of the Drinfeld traces recalled from~\cite{BFPRW} in Section~\ref{secdrintrace}. We begin with the map
$$ \g(\bar{A})[1] \,\cong\, \bar{A}[1] \otimes \g\,,\,\,\,\, \xi \otimes a \mapsto a \otimes \xi \,\text{.}$$
Precomposing this with the natural projection $\bSym^c(\g(\bar{A})[1]) \twoheadrightarrow \g(\bar{A})[1]$, we obtain map of graded vector spaces
$$\bSym^c(\g(\bar{A})[1]) \rar \bar{A}[1]\otimes \g\,,$$
which is equivalent to a map of graded vector spaces
\begin{equation} \la{cogen} \bSym^c(\g(\bar{A})[1]) \otimes \g^{\ast} \rar \bar{A}[1] \,\text{.}\end{equation}
Note that the Lie coalgebra structure on $\g^{\ast}$ together with the cocommutative (conilpotent) DG coalgebra structure on $\bSym^c(\g(\bar{A})[1])$ makes
$\bSym^c(\g(\bar{A})[1]) \otimes \g^{\ast}$ a (conilpotent) graded Lie coalgebra. It follows that~\eqref{cogen} cogenerates a morphism of graded Lie coalgebras
\begin{equation} \la{coprim} \bSym^c(\g(\bar{A})[1]) \otimes \g^{\ast} \rar \mathcal L^c(\bar{A}[1])\,, \end{equation}
where $\mathcal L^c(W)$ denotes the cofree Lie coalgebra cogenerated by a graded vector space $W$. Equip $\bSym^c(\g(\bar{A})[1])$ with the (co)differential in $\C(\g(\bar{A});k)$ and equip $\mathcal L^c(\bar{A}[1])$ with differential in $\bB_{\mathtt{comm}}(\bar{A})$ (see~\cite[Section 6.2.1]{BFPRW}). Then we have

\blemma
The map~\eqref{coprim} is a map of (conilpotent) DG Lie coalgebras.
\elemma

The map~\eqref{coprim} therefore induces a map of cocommutative DG coalgebras
\begin{equation} \la{symcoalg}  \C(\g(\bar{A});k) \otimes \bSym^c(\g^{\ast}) \rar \bSym^c(\mathcal L^c(\bar{A}[1])) \,\text{.}\end{equation}
There is an isomorphism of complexes $ \Sym^{\ast}\,:\, T^c(\bar{A}[1]) \,\cong\, \bSym^c(\mathcal L^c(\bar{A}[1]))$ dual to the symmetrization map  (where $T^c(\bar{A}[1])$ is equipped with the bar differential). We therefore, obtain a map of complexes
\begin{equation} \la{cetobar} \varphi(\mbox{--},\mbox{--})\,:\, \C(\g(\bar{A});k) \otimes \bSym^c(\g^{\ast}) \rar \bB(A) \,\text{.}\end{equation}

Let $\bB(A)^{\natural}$ denote the cocommutator subcomplex of $\bB(A)$. Let $\bSym^{r+1}(\mathcal L^c(\bar{A}[1]))$ continue to denote the image of $\bSym^{r+1}(\mathcal L^c(\bar{A}[1]))$ in $\bB(A)$ under the inverse of the isomorphism $\Sym^{\ast}$. Then, for a polynomial $P\,\in\, I^{r+1}(\g)$,
\bprop
$\varphi(\mbox{--}, P)$ gives a map of complexes
$$ \varphi_P\,:\, \CE(\g(\bar{A});k)\rar \bSym^{r+1}(\mathcal L^c(\bar{A}[1])) \cap \bB(A)^{\natural} \,\cong\, \overline{\mathrm{C}}^{\lambda,(r)}(A)[1]\,\text{.}$$
\eprop

Let $\mathrm{p}\,:\, \mathrm{CC}^{(r)}(\drm(A))) \rar \Omega^r_{\bar{A}}/d\Omega^{r-1}_{\bar{A}}[r]$ be as in~\eqref{pcyciso}. Let $\varepsilon\,:\, \Omega^r_{\bar{A}}/d\Omega^{r-1}_{\bar{A}}[r] \rar \overline{\mathrm{C}}^{\lambda,(r)}(A)$ be as in ~\eqref{epscyclic} and let $\pi_r\,:\, \overline{\mathrm{C}}^{\lambda,(r)}(A) \rar \Omega^r_{\bar{A}}/d\Omega^{r-1}_{\bar{A}}[r] $ be as in~\eqref{picyciso}. For notational brevity, use the same symbol to denote a map of complexes $X \rar Y$ and the induced map $X[i] \rar Y[i]$ for any $i$. Then,

 \bthm \la{csandabscomp}
 Let $A=(\Sym(W), \delta)$ be augmented over $k$. Then, the following diagram commutes in $\mathscr{D}(k)$:
 $$\begin{diagram}
 \CE(\g(\bar{A});k) & & \\
   \dTo^{\frac{1}{(r+1)!}s^{2r}\TP(\theta)} &  \rdTo^{\varphi_P}& \\
 \mathrm{CC}^{(r)}(\drm(A)))[1] & \rTo^{\varepsilon \circ \mathrm{p}} & \overline{\mathrm{C}}^{\lambda,(r)}(A)[1]
 \end{diagram}$$
 \ethm
\bproof
We begin the proof with the following proposition.
\bprop \la{pderhamcs}
$$ \mathrm{p} \circ s^{2r}\TP(\theta)\,=\,s^{2r}P(\theta.\Omega^r)$$
\eprop
\bproof
By~\eqref{csdecomp}, $\TP(\theta)\,=\,\sum_{n=r}^{2r} A_{n-r}P(\theta.[\theta,\theta]^{n-r}\Omega^{2r-n})$. Note that when $s^r\TP(\theta)$ is interpreted as a map of graded vector spaces from
$\C(\g(\bar{A});k)$ to $\mathrm{CC}^{(r)}(\drm(A))$, the image of $P(\theta.[\theta,\theta]^{n-r}\Omega^{2r-n})$ lies in $\Omega^{2r-n}[n]$. It follows that $\mathrm{p} \circ P(\theta.[\theta,\theta]^{n-r}\Omega^{2r-n})\,=\,0$ for $n>r$. Since $A_0=1$, the desired proposition follows.
\eproof
By Proposition~\ref{pderhamcs}, it suffices to verify that in $\mathscr{D}(k)$,
\begin{eqnarray} \la{compcshkr} \varphi_P &=& \frac{1}{(r+1)!} \varepsilon \circ s^{2r}P(\theta.\Omega^r)\,\text{.} \end{eqnarray}
Further, since $I_{\rm HKR}\,:\,  \overline{\mathrm{C}}^{\lambda,(r)}(A)[1] \rar \Omega^r_{\bar{A}}/d\Omega^{r-1}_{\bar{A}}[r+1]$ is a quasi-isomorphism inverting $\varepsilon$, it suffices to verify that
\begin{equation} \la{compcshkr0.5} I_{\rm HKR} \circ \varphi \,=\, \frac{1}{(r+1)!} s^{2r}P(\theta.\Omega^r)\,\text{.} \end{equation}

Note that both sides of~\eqref{compcshkr0.5} are honest maps of complexes.  By Proposition~\ref{pcurv2},  $s^{2r}P(\theta.\Omega^r)$ vanishes on all chains in $\C(\g(\bar{A});k)$ that are not in $\bSym^{r+1}(\g(\bar{A})[1])$. Similarly, $\varphi_P$ vanishes on chains in $\bSym^n(\g(\bar{A})[1])$ for any $n<r+1$. $\varphi_P$ maps chains that are in $\bSym^n(\g(\bar{A})[1])$, $n>r+1$ to chains in
$\bSym^{r+1}(\mathcal L^c(\bar{A}[1])) \cap \bB(A)^{\natural} $ that are a linear combination of summands with at least one factor in
$\mathcal L^{c, \geq 2}(\bar{A}[1])$. Hence, $I_{\rm HKR} \circ \varphi_P$ vanishes on chains that are not in $\bSym^{r+1}(\g(\bar{A})[1])$.

On chains that are in $\bSym^{r+1}(\g(A)[1])$, Proposition~\ref{ptel} gives\footnote{For the rest of this proof, the sign $\pm$ in front of each summand is the sign obtained after applying $\sigma$ to a product of elements of degrees $|a_0|+1,\ldots,|a_n|+1$ in a commutative graded algebra.}:
 $$ P(\theta.\Omega^r)((\xi_0 \otimes a_0) \wedge \ldots \wedge (\xi_r \otimes a_r))\,=\, \sum_{\sigma \in \Sb_{r+1}}  \pm a_{\sigma(0)}da_{\sigma(1)} \ldots da_{\sigma(r)} P(\xi_{\sigma(0)},\ldots, \xi_{\sigma(r)})\,\text{.}$$

On the other hand, by~\eqref{symcoalg},
$$ [(\xi_0 \otimes a_0) \wedge \ldots \wedge (\xi_r \otimes a_r)] \otimes P \mapsto a_0 \wedge \ldots \wedge a_r P(\xi_0 , \ldots , \xi_i) \,\text{.}$$
Thus,
$$\varphi(\mbox{--},P)\,:\, (\xi_0 \otimes a_0) \wedge \ldots \wedge (\xi_r \otimes a_r) {\mapsto} \sum_{\sigma \in \Sb_{i+1}} \pm a_{\sigma(0)} \otimes \ldots \otimes a_{\sigma(r)} P(\xi_{\sigma(0)},\ldots,\xi_{\sigma(r)}) \,\text{.}$$
Recall from~\cite[Section 1.3]{Q} that the identification between $\overline{\mathrm{C}}^{\lambda}(A)[1] \,\cong\, \bB(A)^{\natural}$ is given by the operator $N$ which acts on $A[1]^{\otimes n}$ by $1+\tau+ \ldots+\tau^{n-1}$ where $\tau$ denotes the $n$-cycle $(0,1,\ldots, n-1)$. It follows that
$$N^{-1}(\sum_{\sigma \in \Sb_{r+1}} \pm a_{\sigma(0)} \otimes \ldots a_{\sigma(r)} P(\xi_{\sigma(0)},\ldots,\xi_{\sigma(r)})) \,=\, \frac{1}{r+1} \sum_{\sigma \in \Sb_{r+1}} \pm  a_{\sigma(0)} \otimes \ldots \otimes a_{\sigma(r)} P(\xi_{\sigma(0)},\ldots,\xi_{\sigma(r)}))\,\text{.}$$
Hence,
$$ \varphi_P((\xi_0 \otimes a_0) \wedge \ldots \wedge (\xi_r \otimes a_r))\,=\,\frac{1}{r+1} \sum_{\sigma \in \Sb_{r+1}} \pm  a_{\sigma(0)} \otimes \ldots \otimes a_{\sigma(r)} P(\xi_{\sigma(0)},\ldots,\xi_{\sigma(r)})) \,\text{.}$$
Therefore,
$$ I_{\rm HKR} \circ \varphi_P((\xi_0 \otimes a_0) \wedge \ldots \wedge (\xi_r \otimes a_r)) \,=\, \frac{1}{(r+1)!} \sum_{\sigma \in \Sb_{r+1}} \pm a_{\sigma(0)}da_{\sigma(1)}\ldots da_{\sigma(r)}P(\xi_{\sigma(0)},\ldots, \xi_{\sigma(r)})\,\text{.}$$
This proves the desired theorem.
\eproof

\section{The HKR and co-HKR maps} \la{HKRcoHKR}

In this Appendix, we present a concise proof of Theorem~\ref{conj1}. As explained in Section~\ref{directtraceformula}, the method used for this proof can also be used to give a direct formula for traces of symmetric algebras from which Theorem~\ref{trdiffop} can be derived.

\subsection{Recollections and notation}
Let $W$ be a finite-dimensional $k$-vector space. As usual, let $A =\Sym(W)$ and let $C=\Sym^c(W[1])$ be the coalgebra Koszul dual to $A$. Fixing a basis $\{x_1,\ldots,x_N\}$ of $W$, we identify $A$ with the polynomial algebra $A=k[x_1,\dots, x_N]$. Let $dx_j\,:=\,\twbs x_i$, so that $C$ is identified with the polynomial coalgebra $C\cong k[dx_1,\dots, dx_N]$, equipped with the un-shuffle coproduct. Hence,
\[
 \drm(A)\cong \drm(C)\cong k[x_1,\dots,x_N,dx_1,\dots, dx_N]/k\, \text{.}
\]
The de Rham differential then is defined in the obvious manner by setting $d(x_j)=dx_j$.
Furthermore, we denote by $R=\cb(C)$ the minimal quasi free resolution of $A$ as before.
Concretely, $R$ is free as a graded algebra generated by symbols
\[
x_I := s^{-1} (\prod_{i\in I} dx_i )
\]
for $I\subset \{1,\dots,N\}$, where the product is taken in lexicographic order to fix the sign.
It will be convenient for us to extend this notation also to multisets, i.e., sets with multiplicities, by declaring that $x_I=0$ if the multiset $I$ contains any symbols with multiplicity greater than one.

Furthermore, given arbitrary elements $u_1,\dots, u_M\in W$ we will use similar notation and write
\begin{align*}
u_I&:= s^{-1} (\prod_{i\in I} du_i ) & &\text{and} & du_I&:= \prod_{i\in I} du_i
\end{align*}
where in both cases the product is taken in the lexicographic order.


\subsection{Remark on the map $T$}
Let $A_1$ and $A_2$ be dg (or $A_\infty$) algebras. Recall from~\cite[Sec. 4]{Ke} that an $A_{\infty}$-morphism $f\,:\,A_1 \rar A_2$ is equivalent to a twisting cochain $f\,:\,\bB(A_1) \rar 
A_2$, where $\bB(A_1)$ denotes the bar construction of $A_1$. A twisting coahain $f$ induces a map  between cyclic bar complexes
\[
\phi_f:\mathrm{C}^\lambda(A_1)\rar \mathrm{C}^\lambda(A_2)
\]
given by the formula (using cyclic indexing, i.e., $-1 \equiv n$ etc.)
\[
(a_0,\dots, a_n) \mapsto \sum_{r=1}^n \sum_{j_1+\dots+j_r=n+1} \sum_{i=1}^{j_1}
\pm (f_{j_1}(a_{1-i}, \dots, a_{j_1-i}), f_{j_2}(a_{j_1-i+1},\dots, a_{j_1+j_2-i}),\dots, f_{j_r}(\dots, a_{-i})).
\]
This formula is compatible with composition of $A_\infty$ morphisms, making $\mathrm{C}^\lambda(\mbox{--})$  a functor from the category of dg algebras with $A_\infty$ morphisms to the category of chain complexes. If we require in addition that the $A_\infty$ morphisms be unital, i.e., $f_1(1_{A_1})=1_{A_2}$ and $f_n(\dots,1_{A_1},\dots)=0$ for $n>1$, then $\phi_f$ descends to a map
 \[
 \phi_f\,:\, \overline{\mathrm{C}}^{\lambda}(A_1) \rar  \overline{\mathrm{C}}^{\lambda}(A_2)
 \]
 between reduced cyclic complexes.

In particular, let $A_1=A$ and let $A_2=R$, and let $\theta: \bB(A) \to R$ be the twisting cochain corresponding to a (unital) $A_\infty$ right inverse to the canonical projection $R\to A$.
The composite map
\[
\begin{diagram}  \overline{\mathrm{C}}^{\lambda}(A) & \rTo^{\phi_{\theta}} &  \overline{\mathrm{C}}^{\lambda}(R) & \rOnto & R_{\n} \end{diagram}
\]
is precisely the map $T$ from~\eqref{bkr422}. Functoriality of $ \overline{\mathrm{C}}^{\lambda}(\mbox{--})$ immediately implies the following lemma.

\begin{lemma}\label{lem:stau}
The map $T : \overline{\mathrm{C}}^{\lambda}(A)\to R_\natural$ induces the same map in homology as the zigzag of quasi-isomorphisms
\[
\begin{diagram} \overline{\mathrm{C}}^{\lambda}(A) & \lOnto &  \overline{\mathrm{C}}^{\lambda}(R) & \rOnto & R_\natural \end{diagram} \,\text{.}
\]
\end{lemma}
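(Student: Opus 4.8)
The plan is to derive the lemma formally, using only the functoriality of $\overline{\mathrm{C}}^{\lambda}(\mbox{--})$ under unital $A_\infty$ morphisms established just above, together with the invariance of reduced cyclic homology under quasi-isomorphisms of DG $k$-algebras. First I would name the two morphisms in play: the canonical projection $\pi\,:\,R \rar A$, viewed as a strict (hence unital) $A_\infty$ morphism, and the unital $A_\infty$ right inverse $f\,:\,A \rar R$ whose twisting cochain is $\theta$. By the defining property of $f$ we have $\pi \circ f = \id_A$ as $A_\infty$ morphisms.

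Applying the functor $\overline{\mathrm{C}}^{\lambda}(\mbox{--})$, I obtain chain maps $\phi_\pi\,:\,\overline{\mathrm{C}}^{\lambda}(R) \rar \overline{\mathrm{C}}^{\lambda}(A)$ (which is just the map induced by the strict morphism $\pi$) and $\phi_\theta\,:\,\overline{\mathrm{C}}^{\lambda}(A) \rar \overline{\mathrm{C}}^{\lambda}(R)$. Compatibility of $\phi_{(\mbox{--})}$ with composition, applied to $\pi \circ f = \id_A$, yields the identity of chain maps
\[
\phi_\pi \circ \phi_\theta \,=\, \phi_{\pi \circ f} \,=\, \phi_{\id_A} \,=\, \id_{\overline{\mathrm{C}}^{\lambda}(A)}\,.
\]

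Next I would note that $\phi_\pi$ is a quasi-isomorphism: since $\pi$ is a quasi-isomorphism of DG algebras and we work over a field, reduced cyclic homology is preserved, so $\pi$ induces an isomorphism $\rHC_\bullet(R) \xrightarrow{\sim} \rHC_\bullet(A)$. Combined with the displayed identity, this forces $(\phi_\theta)_\bullet = (\phi_\pi)_\bullet^{-1}$ on homology. Finally, by its very definition $T = \can \circ \phi_\theta$, where $\can\,:\,\overline{\mathrm{C}}^{\lambda}(R) \onto R_\natural$ is the canonical projection (a chain map because the cyclic boundary of a $1$-chain is a graded commutator, hence zero in $R_\natural$). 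Therefore
\[
T_\bullet \,=\, \can_\bullet \circ (\phi_\theta)_\bullet \,=\, \can_\bullet \circ (\phi_\pi)_\bullet^{-1}\,,
\]
and the right-hand side is exactly the map on homology determined by the zigzag $\overline{\mathrm{C}}^{\lambda}(A) \xleftarrow{\phi_\pi} \overline{\mathrm{C}}^{\lambda}(R) \xrightarrow{\can} R_\natural$. That this is a zigzag of quasi-isomorphisms (i.e. that $\can$ too is a quasi-isomorphism, via the Feigin--Tsygan identification $\rHC_\bullet(R) \cong \H_\bullet(R_\natural)$ for the quasi-free algebra $R = \cb(C)$) justifies the terminology but is not needed to identify the two induced maps.

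I expect the only point requiring care to be the invariance statement used in the third step; everything else is formal bookkeeping with the functoriality of $\overline{\mathrm{C}}^{\lambda}(\mbox{--})$. Accordingly I would cite the standard quasi-isomorphism invariance of reduced cyclic homology over a characteristic-zero field rather than reprove it, and double-check that $\pi$ and $f$ satisfy the unitality hypotheses so that their induced maps genuinely descend to the reduced cyclic complexes.
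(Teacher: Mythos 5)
Your proposal is correct and is essentially the paper's own argument: the paper proves this lemma by simply invoking the functoriality of $\overline{\mathrm{C}}^{\lambda}(\mbox{--})$ under unital $A_\infty$ morphisms established immediately beforehand, and your write-up just spells out that one-line deduction ($\phi_\pi\circ\phi_\theta=\phi_{\pi\circ f}=\id$, $\phi_\pi$ a quasi-isomorphism, hence $T_\bullet=\can_\bullet\circ(\phi_\pi)_\bullet^{-1}$). No substantive difference in route.
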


The Lemma provides us with an alternative definition of $T$ without $A_\infty$ morphisms, and hence with another equivalent formulation of Theorem~\ref{conj1}.

\subsection{Proof of Theorem~\ref{conj1}}
We have the following diagram of quasi-isomorphisms:
\begin{equation}
\label{equ:zigzag}
\begin{diagram} \drm(A)/d\drm(A) & \lTo^{I_{\rm HKR}} & \overline{\mathrm{C}}^{\lambda}(A) & \lTo &
\overline{\mathrm{C}}^{\lambda}(R)  & \rTo& R_\natural \,=\, \overline{\mathrm{C}}^{\lambda}(C)[1]  & \rTo^{\varepsilon} &
\ker(d_C)[1] \end{diagram}
\end{equation}

Here, $\varepsilon$ is the quasi-isomorphism inverting the co-HKR map (see~\eqref{epscyccoalg}). Using Lemma \ref{lem:stau} the statement of Theorem~\ref{conj1} is equivalent to the assertion that the isomorphism in homology induced by~\eqref{equ:zigzag} above is just the de Rham differential.
To check this, let us start with the element of $\drm(A)/d\drm(A) $ represented by $u_1u_2\cdots u_n du_{n+1}\cdots du_{n+p}$ for elements $u_1,\dots,u_{n+p}\in W$.

\begin{lemma}
A representative in $\overline{\mathrm{C}}^{\lambda}(R)$ of the homology class corresponding to $\alpha\,:=\,u_1u_2\cdots u_n du_{n+1}\cdots du_{n+p}$ is given by
\begin{multline}\label{equ:betadef}
\beta=
\frac{1}{n!}
\sum_{\sigma\in \Sb_n}
 \sum_{m= 0}^p
  \sum_f (-1)^f
 u_{\{\sigma(1)\}\cup f^{-1}(1)} u_{\{\sigma(2)\}\cup f^{-1}(2)} \cdots u_{\{\sigma(n)\}\cup f^{-1}(n)}
  \otimes u_{f^{-1}(\bar 1)}\otimes u_{f^{-1}(\bar 2)}\otimes \cdots \otimes u_{f^{-1}(\bar m)},
 \end{multline}
where the sum is over maps $f:\{n+1,\dots,n+p\}\to \{1,\dots,n,\bar 1,\dots, \bar m\}$ such that each $\bar j$ is hit at least once. The sign depends only on $f$, and is determined by the permutation of the factors $du_j$ (which are counted as odd) appearing in a term in the above formula.
\end{lemma}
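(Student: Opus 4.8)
The plan is to verify the two properties that together make $\beta$ a valid representative along the zigzag \eqref{equ:zigzag}: first, that $\beta$ is a cycle in $\overline{\mathrm{C}}^{\lambda}(R)$ for the total differential $b+\delta_R$, where $b$ is the cyclic bar differential and $\delta_R$ is induced by the cobar differential of $R$ computed in Lemma~\ref{diff_res}; and second, that the image $\pi_\ast\beta$ of $\beta$ under the quasi-isomorphism $\overline{\mathrm{C}}^{\lambda}(R)\to\overline{\mathrm{C}}^{\lambda}(A)$ induced by the projection $\pi\colon R\to A$ is exactly the antisymmetrized cycle whose $I_{\rm HKR}$-image is $\alpha$. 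Since $\overline{\mathrm{C}}^{\lambda}(R)\to\overline{\mathrm{C}}^{\lambda}(A)$ is a quasi-isomorphism, these two facts identify the homology class of $\beta$ with the $I_{\rm HKR}$-preimage of $\alpha$, which is the assertion.

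The second property is a short computation, and I would do it first, as it pins down the class that $\beta$ must represent. Because $\pi$ sends the generator $u_J$ to $0$ whenever $|J|\ge 2$ and to $u_j$ when $J=\{j\}$, the only terms of $\beta$ surviving under $\pi_\ast$ are those with $m=p$, with $f$ a bijection onto $\{\bar 1,\dots,\bar p\}$, and with $f^{-1}(k)=\emptyset$ for every numbered slot $k$. In each such term the first tensor factor becomes the commutative product $u_{\sigma(1)}\cdots u_{\sigma(n)}=u_1\cdots u_n$, so the sum over $\sigma\in\Sb_n$ produces $n!$ identical contributions and cancels the prefactor $\tfrac{1}{n!}$. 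What remains is precisely the antisymmetrization map $\varepsilon$ (see \eqref{epscyclic}) applied to $u_1\cdots u_n\otimes u_{n+1}\wedge\cdots\wedge u_{n+p}$; applying $I_{\rm HKR}$ and using that the $du_i$ anticommute recovers $\alpha=u_1u_2\cdots u_n\,du_{n+1}\cdots du_{n+p}$.

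The main work, and the hard part, is showing $(b+\delta_R)\beta=0$. Here I would exploit the fact that $b$ lowers the number of tensor factors by one, by merging two adjacent factors or, cyclically, the last factor into the first, whereas $\delta_R$ preserves that number, splitting a single generator $u_K$ into a sum of commutators $[u_{K'},u_{K''}]$, a product of two generators sitting in the same slot, by Lemma~\ref{diff_res}. Grading $(b+\delta_R)\beta$ by the resulting number of tensor factors, I would set up an explicit sign-reversing pairing: a term produced by $\delta_R$ splitting a generator is matched with the term produced by $b$ merging the two adjacent factors that carry the same pair of index-sets, and the cyclic term of $b$ (which wraps the last factor around to the product in the first slot) is matched against $\delta_R$ acting on that first product. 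The summation over all maps $f$ and over $m=0,\dots,p$ is engineered precisely so that this matching telescopes—moving a differential index between a numbered slot, where it is absorbed into a generator and then released by $\delta_R$, and its own $\bar\jmath$-slot, where it is a standalone factor merged by $b$.

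I expect the only genuinely delicate point to be the sign bookkeeping: one must check that the shuffle sign $(-1)^{p}(-1)^{\sigma}$ of Lemma~\ref{diff_res}, the Koszul signs from permuting the odd symbols $du_j$ implicit in the definition of $(-1)^{f}$, and the position signs $(-1)^{i}$ of $b$ all conspire so that each matched pair cancels. I would control this by using the antisymmetry already built into the generators $u_J$—they vanish on repeated indices and are defined with a fixed lexicographic ordering—to reduce each cancellation to a single normalized shuffle type, and then compare signs termwise. Once $(b+\delta_R)\beta=0$ is established, combining it with the computation of $\pi_\ast\beta$ completes the proof of the lemma.
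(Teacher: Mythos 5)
Your proposal is correct and follows essentially the same route as the paper: first checking that only the $m=p$, bijective-$f$ terms survive the projection to $\overline{\mathrm{C}}^{\lambda}(A)$ and recover the antisymmetrized HKR representative of $\alpha$, then establishing closedness under the total differential by the sign-reversing cancellation between the Hochschild/bar differential merging adjacent tensor factors and $\delta_R$ splitting a generator, with the terms for a given $m$ canceling against those for $m-1$. The paper likewise only sketches the sign bookkeeping (noting in addition that each $\sigma$-summand is separately closed), so your level of detail matches the original.
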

\begin{proof}
Let us first verify that the stated element indeed maps to $\alpha$ under the composition of the two left-most maps in \eqref{equ:zigzag}. Indeed, the only elements that survive the projection $R\to A$ are those $u_I$'s with $|I|=1$. In other words, only the top summand $m=p$ contributes, and for $m=p$ the only allowed maps $f$ are permutations of $\{\bar 1,\dots, \bar m\}$. There are $p!$ such maps, canceling the prefactor in the definition of the Hochschild-Kostant-Rosenberg morphism. The symmetrization $\frac 1 {n!}\sum_{\sigma\in \Sb_n}$ is inessential and can be omitted before or after mapping via the HKR morphism. Hence we exactly recover $\alpha$ as desired.

Next we have to show that $\beta$ is a cocycle. We claim that, in fact, each $\sigma$-summand in \eqref{equ:betadef} is separately closed. This is verified by a straightforward, but tedious computation. We sketch this computation for the term where $\sigma$ is the identity. Abbreviating the first tensor factor as $X$ to save space, the Hochschild differential of this term is
\begin{eqnarray*}
\lefteqn{\sum_{m=0}^p\,
  \sum_f \,(-1)^f
  \delta\left(
 X \otimes u_{f^{-1}(\bar 1)}\otimes u_{f^{-1}(\bar 2)}\otimes \cdots \otimes u_{f^{-1}(\bar m)}\right) =} \\
& & \sum_{m= 0}^p\, \sum_f\, (-1)^f
\bigl(\pm X u_{f^{-1}(\bar 1)}\otimes u_{f^{-1}(\bar 2)}\otimes \cdots \otimes u_{f^{-1}(\bar m)} 
\pm X\otimes  u_{f^{-1}(\bar 1)} u_{f^{-1}(\bar 2)}\otimes \cdots \otimes u_{f^{-1}(\bar m)}
 +\ \ldots \\*[2ex]
& & \pm X\otimes  u_{f^{-1}(\bar 1)}\otimes  u_{f^{-1}(\bar 2)}\otimes \cdots \otimes  u_{f^{-1}(\overline{ m-1})} u_{f^{-1}(\bar m)}
\pm
u_{f^{-1}(\bar m)} X\otimes  u_{f^{-1}(\bar 1)}\otimes  u_{f^{-1}(\bar 2)}\otimes \cdots \otimes  u_{f^{-1}(\overline{m-1})}
\bigr)\, .
\end{eqnarray*}
Note that the terms $u_{f^{-1}(\bar 1)} u_{f^{-1}(\bar 2)}$ appearing in the above summands for some $m$ reproduce $\delta_R u_{f^{-1}(\bar 1)}$ in the summand for $m-1$, where $\delta_R$ is the differential on $R$. It can alaso be seen without difficulty that the sign before the above summands with $u_{f^{-1}(\bar{1})}u_{f^{-1}(\bar{2})}$ coming from $\delta_R$ is precisely $(-1)^{f+|X|+|u_{f^{-1}(\bar{1})}|+2}$ while the sign on the summands with $u_{f^{-1}(\bar{1})}u_{f^{-1}(\bar{2})}$ coming from the Hochschild differential $\delta$ is $(-1)^{f+|X|+|u_{f^{-1}(\bar{1})}|+1}$. Thus, the above summands with $u_{f^{-1}(\bar{1})}u_{f^{-1}(\bar{2})}$ coming from the Hochschild differential cancel out similar summands from $\delta_R$ applied to $f$ corresponding to $m-1$. A similar statement holds, of course, if we replace 1 and 2 by $i$ and $j$. Furthermore, by a similar argument, the terms $X\otimes  u_{f^{-1}(\bar 1)}$ and $u_{f^{-1}(\bar m)} X$ appearing in some summand $m$ cancel out the terms yielding $\delta_RX$ for one lower $m$. This shows that \eqref{equ:betadef} is indeed closed under the total differential.
\end{proof}

Under the map $\overline{\mathrm{C}}^{\lambda}(R) \to R_\natural$ the element $\beta$ above is sent   to
\begin{equation}\label{equ:Rnatelement}
\frac{1}{n!}
\sum_{\sigma\in \Sb_n} \sum_f (-1)^f
  u_{\{\sigma(1)\}\cup f^{-1}(1)} u_{\{\sigma(2)\}\cup f^{-1}(2)} \cdots u_{\{\sigma(n)\}\cup f^{-1}(n)}.
\end{equation}
Here the sum is over maps $f:\{n+1,\dots,n+p\}\to \{1,\dots, n\}$.
The corresponding element in $\overline{\mathrm{C}}^{\lambda}(C)[1]$ is obtained by replacing $u_I$'s by the corresponding $du_I$'s, and putting tensor signs between factors. This gives
\begin{equation}\label{equ:topcomposition}
\frac{1}{n!}
\sum_{\sigma\in \Sb_n}
  \sum_f (-1)^f
  du_{\{\sigma(1)\}\cup f^{-1}(1)} \otimes du_{\{\sigma(2)\}\cup f^{-1}(2)} \otimes \cdots \otimes du_{\{\sigma(n)\}\cup f^{-1}(n)}.
\end{equation}
We need to compute the image of the above element under the map $\varepsilon$ (see~\eqref{epscyccoalg}). The map dual to $\varepsilon$ has been explicitly described in~\eqref{epscyclic}. In particular,  all terms that have more than one non-linear tensor factor are sent to $0$ under $\varepsilon$. Hence, in the above sum over $f$ we retain only maps $f:\{n+1,\dots,n+p\}\to \{1,\dots, n\}$ that have a single element in the image. We will assume that $p>0$, leaving the simpler case of $p=0$ to the reader. Then the summands of~\eqref{equ:topcomposition} contributing nontrivially add up to
\[
\frac{1}{n!}
\sum_{\sigma\in \Sb_n}
 \sum_{i=1}^n
  du_{\sigma(1)} \otimes \cdots \otimes du_{\sigma(i)} du_{\{n+1,\dots,n+p \} } \otimes \cdots \otimes du_{\sigma(n)}
  =
\frac{n}{n!}
\sum_{\sigma\in \Sb_n}
  du_{\sigma(1)}du_{\{n+1,\dots,n+p \} }  \otimes du_{\sigma(2)} \otimes  \cdots \otimes du_{\sigma(n)}
  .
\]

One can now easily verify that $
\varepsilon[\eqref{equ:topcomposition}] \,=\, \sum_{i=1}^n  du_i du_{n+1}\cdots du_{n+p} u_1\cdots \hat u_i \cdots u_n=  d\alpha\in \ker d\,\text{.}
$
This proves Theorem~\ref{conj1}.
\hfill\qed

\subsection{Simplified trace formula} \la{simpletraceform}
Note that we can get a formula for the trace map by just mapping the element \eqref{equ:Rnatelement} of $R_\natural$ to the abelianization $R_{\ab}\cong k[x_I\mid I\subset \{1,\dots ,N\}, I\neq\emptyset]$.

\begin{theorem}\label{thm:simpletrace}
The trace map $ \TTr(A):\drm(A)/d\drm(A) \to R_{\ab}$ satisfies the formula
\[
\TTr(A)(u_1u_2\cdots u_n du_{n+1}\cdots du_{n+p})
=
 \sum_f (-1)^f
  u_{\{1\}\cup f^{-1}(1)} u_{\{2\}\cup f^{-1}(2)} \cdots u_{\{n\}\cup f^{-1}(n)}.
\]
where the sum is over maps $f:\{n+1,\dots,n+p\}\to \{1,\dots,n\}$.
\end{theorem}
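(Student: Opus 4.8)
The plan is to extract the formula directly from the computation already performed in the proof of Theorem~\ref{conj1}, stopping one map earlier. Recall from Proposition~\ref{prtr2} that the reduced trace $\TTr(A):\drm(A)/d\drm(A)\to R_{\ab}$ is induced by the composite $\mathrm{can}\circ T\circ\varepsilon$, and that by Lemma~\ref{lem:stau} the map $T$ may be replaced on homology by the zigzag of quasi-isomorphisms $\overline{\mathrm{C}}^{\lambda}(A)\twoheadleftarrow\overline{\mathrm{C}}^{\lambda}(R)\twoheadrightarrow R_\natural$, followed by the canonical projection $R_\natural\twoheadrightarrow R_{\ab}$. For the class $\alpha=u_1\cdots u_n\,du_{n+1}\cdots du_{n+p}$, the Lemma in the proof of Theorem~\ref{conj1} already exhibits an explicit cocycle $\beta\in\overline{\mathrm{C}}^{\lambda}(R)$ (formula~\eqref{equ:betadef}) lifting $\alpha$ along the left half of the zigzag, and shows that its image in $R_\natural$ is exactly the expression~\eqref{equ:Rnatelement}. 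Hence $\TTr(A)(\alpha)$ is nothing but the image of~\eqref{equ:Rnatelement} under $R_\natural\twoheadrightarrow R_{\ab}$, and the whole task reduces to simplifying that image.

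The simplification consists in showing that in the (graded-)commutative algebra $R_{\ab}$ the normalized symmetrization $\frac{1}{n!}\sum_{\sigma\in\Sb_n}$ collapses to a single summand. For a fixed $\sigma\in\Sb_n$ I would reindex the inner sum by the bijective substitution $f\mapsto g:=\sigma\circ f$, under which $f^{-1}(k)=g^{-1}(\sigma(k))$, so that the $k$-th factor $u_{\{\sigma(k)\}\cup f^{-1}(k)}$ becomes $u_{\{\sigma(k)\}\cup g^{-1}(\sigma(k))}$. Setting $j=\sigma(k)$ and using that the generators of $R_{\ab}$ commute up to the Koszul sign, the product $\prod_{k=1}^n u_{\{\sigma(k)\}\cup g^{-1}(\sigma(k))}$ rearranges into $\prod_{j=1}^n u_{\{j\}\cup g^{-1}(j)}$, which no longer depends on $\sigma$. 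Since $\sigma$ ranges over all of $\Sb_n$ and produces $n!$ identical contributions, the prefactor $\frac{1}{n!}$ cancels and one is left with the asserted sum over maps $f:\{n+1,\dots,n+p\}\to\{1,\dots,n\}$ (the case $p=0$ being the degenerate one, with $f$ the empty map, which reproduces $u_1\cdots u_n$).

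The only point demanding genuine care is the sign bookkeeping, and this is where all the content sits. The sign $(-1)^f$ in~\eqref{equ:Rnatelement} records the permutation of the odd symbols $du_{n+1},\dots,du_{n+p}$ needed to write each factor $u_{\{\sigma(k)\}\cup f^{-1}(k)}$ in lexicographic order; the substitution $f\mapsto g=\sigma\circ f$ alters this sign, while reordering the commuting factors of $R_{\ab}$ by $\sigma$ contributes a further Koszul sign coming from the degrees $|u_I|=|I|-1+\sum_{i\in I}|u_i|$. The hard part will be to verify that these two sign changes cancel precisely, so that the $\sigma$-summand coincides term-by-term with the $\sigma=\mathrm{id}$ summand carrying sign $(-1)^g$. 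Granting this cancellation, which is a routine but delicate application of the Koszul rule, the formula of Theorem~\ref{thm:simpletrace} follows at once.
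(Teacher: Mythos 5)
Your proposal is correct and follows essentially the same route as the paper: the paper obtains this theorem precisely by pushing the element \eqref{equ:Rnatelement}, already produced in the proof of Theorem~\ref{conj1}, through the canonical map $R_\natural \to R_{\ab}$. The only thing you add is an explicit justification that the $\frac{1}{n!}\sum_{\sigma\in\Sb_n}$ collapses to a single term under the substitution $f\mapsto\sigma\circ f$ and graded commutativity of $R_{\ab}$ — a step the paper leaves implicit — and your outline of the Koszul-sign cancellation is the right way to complete it.
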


\subsection{Another proof of Theorem~\ref{trdiffop}} \la{directtraceformula}

For $\omega\,\in \, \Sym^{n}(W) \otimes \wedge^p(W)$, let $F(\omega)\,:=\,\frac{1}{(n+1)!}[\theta.\Omega^{n}](\omega)$. Explicitly, $(n+1)!F(\omega)$ is given by the right hand side of~\eqref{tracegen} (for $q=n$).  We can directly verify that for $\omega\,\in\,\Sym^n(W) \otimes \wedge^p(W)$,  the right hand side of the formula in Theorem~\ref{thm:simpletrace} (applied to $\omega$) coincides with $F(d\omega)$. This gives us another route to the proof of Theorem~\ref{trdiffop}. Indeed,
\[
F(u_1u_2\cdots u_n du_{n+1}\cdots du_{n+p})
=
  \frac{1}{n+1}\sum_f \pm
 u_{f^{-1}(0)} u_{\{1\}\cup f^{-1}(1)} u_{\{2\}\cup f^{-1}(2)} \cdots u_{\{n\}\cup f^{-1}(n)}
\]
and where the sum is over all maps $f:\{n+1,\dots, n+p\} \to \{0,1,\dots, n\}$. Computing $F(d\omega)$ for $\omega=u_1u_2\cdots u_n du_{n+1}\cdots du_{n+p}$ we obtain
\begin{align*}
F(d\omega) \,=\,F\left(\sum_{i=1}^n u_1\cdots \hat u_i\cdots u_n du_i du_{n+1}\cdots du_{n+p} \right)
\,=\,
\frac{1}{n} \sum_{i=1}^n
\sum_f \pm
u_{f^{-1}(0)} u_{\{1\}\cup f^{-1}(1)}  \cdots \hat u_i \cdots u_{\{n\}\cup f^{-1}(n)}
\end{align*}
where the second sum in the second line is over maps $f:\{i, n+1,\dots, n+p\} \to \{0,1,\dots,\hat i,\dots, n\}$.
We will decompose this sum according to $j:=f(i)$. In particular, we split off the $j=0$-piece. This yields
\begin{align*}
F(d\omega) &=
\frac{1}{n} \sum_{i=1}^n \sum_f \pm u_{\{i\}\cup f^{-1}(0)} u_{\{1\}\cup f^{-1}(1)}  \cdots \hat u_i \cdots u_{\{n\}\cup f^{-1}(n)}\\
&\quad+
\frac{1}{n} \sum_{\substack{i,j=1 \\ i\neq j}}^n \sum_f \pm
u_{f^{-1}(0)} u_{\{1\}\cup f^{-1}(1)}  \cdots \hat u_i \cdots u_{\{j,i\} \cup f^{-1}(j)}\cdots u_{\{n\}\cup f^{-1}(n)}
\end{align*}
where the sums are now over maps $f:\{ n+1,\dots, n+p\} \to \{0,1,\dots,\hat i,\dots, n\}$. Note that the second line is zero by antisymmetry of the summand in $i$ and $j$. The first line remains and can be re-written as
\[
F(d\omega) =
\sum_f \pm
  u_{\{1\}\cup f^{-1}(1)} u_{\{2\}\cup f^{-1}(2)} \cdots u_{\{n\}\cup f^{-1}(n)}
\]
which agrees with the formula of Theorem~\ref{thm:simpletrace}.
\hfill\qed

\end{document}